\numberwithin{equation}{section}
\newtheorem{thm}{Theorem}[section]
\newtheorem{lem}[thm]{Lemma}
\newtheorem{Prop}[thm]{Proposition}
\newcommand{\R}{\mathbb{R}}
\newcommand\cp{\mathcal{P}}
\newcommand\cq{\mathcal{Q}}
\def\sp {\quad}
\begin{document}
	\baselineskip=14pt
	
	\title[Critical Choquard equation]{ Construction of infinitely many solutions for a critical Choquard equation via local Poho\v{z}aev identities}
	
	\author[F. Gao]{Fashun Gao}
	\author[V.\ Moroz]{Vitaly Moroz}
	\author[M. Yang]{Minbo Yang$^*$}
	\author[S. Zhao]{Shunneng Zhao}
	
	\address{Fashun Gao, \newline\indent Department of Mathematics and Physics, Henan University of Urban Construction, \newline\indent
		Pingdingshan, Henan, 467044, People's Republic of China}
	\email{fsgao@zjnu.edu.cn}
	
	\address{Vitaly Moroz,
		\newline\indent Mathematics Department, Swansea University,
		\newline\indent Bay Campus, Fabian Way, Swansea SA1 8EN, Wales, United Kingdom}
	\email{v.moroz@swansea.ac.uk}

	\address{Minbo Yang, \newline\indent Department of Mathematics, Zhejiang Normal University, \newline\indent
		Jinhua 321004, People's Republic of China}
	\email{mbyang@zjnu.edu.cn}

	\address{Shunneng Zhao, \newline\indent Department of Mathematics, Zhejiang Normal University, \newline\indent
		Jinhua 321004, People's Republic of China}
	\email{snzhao@zjnu.edu.cn}

	\subjclass[2010]{35J20, 35J60, 35A15}
	\keywords{Infinitely many solutions; Choquard equation; Poho\v{z}aev identities; Reduction method.}
	
	\thanks{$^\dag$Fashun Gao is partially supported by NSFC (11901155); Vitaly Moroz and Minbo Yang are partially supported by the Royal Society IEC\textbackslash NSFC\textbackslash 191022.}
	\thanks{$^\ddag$Minbo Yang is the corresponding author who is partially supported by NSFC (11971436, 12011530199) and ZJNSF(LZ22A010001, LD19A010001).}

	\begin{abstract}
		In this paper, we study a class of the critical Choquard equations with axisymmetric potentials,
		$$
		-\Delta u+ V(|x'|,x'')u
		=\Big(|x|^{-4}\ast |u|^{2}\Big)u\hspace{4.14mm}\mbox{in}\hspace{1.14mm} \mathbb{R}^6,
		$$
		where $(x',x'')\in \mathbb{R}^2\times\mathbb{R}^{4}$, $V(|x'|, x'')$ is a bounded nonnegative function in $\mathbb{R}^{+}\times\mathbb{R}^{4}$, and $*$ stands for the standard convolution. The equation is critical in the sense of the Hardy-Littlewood-Sobolev inequality. By applying a finite dimensional reduction argument and developing novel local Poho\v{z}aev identities, we prove that if the function $r^2V(r,x'')$ has a topologically nontrivial critical point then the problem admits  infinitely many solutions with arbitrary large energies.	
	\end{abstract}

	\maketitle
	
	\begin{center}
	\begin{minipage}{8.5cm}
			\small
			\tableofcontents
		\end{minipage}
\end{center}
%

\section{Introduction and main results}
\ \ \ \ The nonlinear Choquard equation
\begin{equation}\label{Nonlocal.S1}
	-\Delta u+ V(x)u=\Big(|x|^{-\mu}\ast |u|^{q}\Big)|u|^{q-2}u,\hspace{4.14mm} \mbox{in}\ \mathbb{R}^N
\end{equation}
arises in various fields of mathematical physics, such as the description of the quantum theory of a polaron at rest \cite{P1} and the Hartree-Fock theory of one-component plasma which models of the electron trapped
in its own hole \cite{Lieb1}.
First mathematical results on equation \eqref{Nonlocal.S1} are due to Lieb \cite{Lieb1}, who proved the existence and uniqueness, up to translations, of the ground state for \eqref{Nonlocal.S1}) with $N=3$, $\mu= 1$, $q = 2$ and $V$ a positive constant. Lieb's results were improved and extended by Lions \cite{Ls,Lions-I} established, in particular, the existence of a sequence of radially symmetric solutions.  In the case when $V$ is a constant, Moroz and Van Schaftingen \cite{MS1} proved regularity, positivity and radial symmetry of the ground states for an optimal range of $q$, and described asymptotic decay of solutions at infinity.
Ackermann \cite{AC} proposed a new approach to prove the existence of infinitely many geometrically distinct weak solutions in the case of a periodic potential $V$. If the potential $V$ has the form of a deep potential well $\lambda a(x)+1$ where $a(x)$ is a nonnegative continuous function such that $\Omega =$ int $(a^{-1}(0))$ is a non-empty bounded open set with smooth boundary, in \cite{ANY} the authors studied the existence and multiplicity of multi-bump shaped solution. The existence of nodal solutions for the nonlocal Choquard equation has also attracted a lot of interest, see  for example \cite{GV, RV} , the sign-changing solutions is hard to obtain due to the nonlocal interaction. The existence and concentration behavior of semiclassical solutions for the singularly perturbed subcritical Choquard equation have been considered in \cite{ACTY, AGSY, DGY, MS3, WW}.  Among them, Wei and Winter \cite{ WW} constructed families of solutions by a using a Lyapunov-Schmidt type reduction. Chen \cite{Cgy} constructed multiple semiclassical solutions to the Choquard equation with an external potential by Lyapunov-Schmidt reduction argument. Luo et al. \cite{LPW} established the uniqueness of positive solutions that concentrating at the nondegenerate critical points of the potential by combining a local type of Poho\v{z}aev's identities and blow-up techniques.

To state the main results, we first recall
the Hardy-Littlewood-Sobolev inequality (see \cite[Theorem 4.3]{LL}) to clarify the meaning of ``critical" for the Choquard equation.

\begin{Prop}\label{pro1.1}
	Let $t,\,r>1$ and $0<\mu<N$ be such that $\frac{1}{t}+\frac{\mu}{N}+\frac{1}{r}=2$.
	Then there is a sharp constant $C(N,\mu,t)$ such that, for $f\in L^{t}(\R^N)$
	and $h\in L^{r}(\R^N)$,
	$$
	\left|\int_{\R^{N}}\int_{\R^{N}}\frac{f(x)h(y)}{|x-y|^{\mu}}dxdy\right|
	\leq C(N,\mu,t) |f|_{L^t(\R^N)}|h|_{L^r(\R^N)}.
	$$
If $t=r=2N/(2N-\mu)$, then
$$
C(t,N,\mu,r)=C(N,\mu)=\pi^{\frac{\mu}{2}}\frac{\Gamma(\frac{N}{2}-\frac{\mu}{2})}{\Gamma(N-\frac{\mu}{2})}\left\{\frac{\Gamma(\frac{N}{2})}{\Gamma(N)}\right\}^{-1+\frac{\mu}{N}}.
$$
In this case there is equality is achieved if and only if $f\equiv Ch$ and
$$
h(x)=A(\gamma^{2}+|x-a|^{2})^{-(2N-\mu)/2}
$$
for some $A\in \mathbb{C}$, $0\neq\gamma\in\mathbb{R}$ and $a\in \mathbb{R}^{N}$.
\end{Prop}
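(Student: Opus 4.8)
The plan is to treat the statement in two layers: first the inequality with \emph{a} finite constant, valid throughout the admissible range $\tfrac1t+\tfrac\mu N+\tfrac1r=2$, and then the finer assertions — the explicit value of $C(N,\mu)$ and the classification of extremals — which concern only the conformal exponents $t=r=\tfrac{2N}{2N-\mu}$.

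For the inequality I would argue by duality. Introducing the Riesz potential $I_\mu f(x)=\int_{\R^N}|x-y|^{-\mu}f(y)\,dy$, the displayed bilinear bound is equivalent to the boundedness $I_\mu\colon L^t(\R^N)\to L^{r'}(\R^N)$, where $\tfrac1{r'}=\tfrac1t+\tfrac\mu N-1\in(0,1)$ by hypothesis. Since $|x|^{-\mu}$ lies in the weak Lebesgue space $L^{N/\mu,\infty}(\R^N)$ — one has $\bigl|\{x:|x|^{-\mu}>\lambda\}\bigr|=\omega_N\lambda^{-N/\mu}$, so the quasinorm is finite and explicit — Young's convolution inequality in the Lorentz scale gives $\|I_\mu f\|_{L^{r'}}\lesssim\|f\|_{L^t}$ exactly when $1<t,r'<\infty$, which is the assumed relation. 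To keep the argument elementary one can instead take the classical route: split the kernel as $|x|^{-\mu}\mathbf 1_{\{|x|\le R\}}+|x|^{-\mu}\mathbf 1_{\{|x|>R\}}$, bound the two convolutions by H\"older and Young with $R$ chosen as a function of the level or of $|f(x)|$, optimize in $R$ to obtain a weak-type $(t,r')$ estimate, and upgrade it to the strong bound by the Marcinkiewicz interpolation theorem applied to two nearby admissible exponent pairs.

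For the sharp constant and the extremals, with $t=r=\tfrac{2N}{2N-\mu}$, I would follow Lieb's scheme. First, the Riesz rearrangement inequality shows that replacing $f,h$ by their symmetric-decreasing rearrangements does not decrease the quotient of $\int_{\R^N}\int_{\R^N}|x-y|^{-\mu}f(x)h(y)\,dx\,dy$ by $\|f\|_t\|h\|_r$, so the supremum is attained among nonnegative radially nonincreasing functions. Second, at these exponents the functional is conformally invariant: pulling it back through the stereographic projection $\mathcal S\colon S^N\to\R^N$ turns it into an $O(N+1)$- and conformally invariant integral over the sphere, so the whole conformal group acts on the maximization problem. The decisive step combines these two symmetries via the Carlen--Loss \emph{competing symmetries} device: iterating alternately the symmetric-decreasing rearrangement and a fixed noncompact conformal map produces a sequence that converges strongly in $L^t$ to a function invariant under both operations, which can only be a multiple of $(1+|x|^2)^{-(2N-\mu)/2}$. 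Evaluating the functional on this profile yields the stated value $C(N,\mu)$ via Beta-function identities, while undoing the conformal normalization generates the entire family $h(x)=A(\gamma^2+|x-a|^2)^{-(2N-\mu)/2}$; inspecting the equality cases in the Riesz rearrangement and in H\"older's inequality along the way forces $f\equiv Ch$.

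I expect the inequality with a non-optimal constant to be routine; all the difficulty sits in the conformal statement, and specifically in the rigidity. Rearrangement alone only reduces to radial competitors and cannot see the extra conformal degrees of freedom, so some genuinely new input is needed — the competing-symmetries convergence argument (or, equivalently, a classification of the positive solutions of the associated Euler--Lagrange equation, or the link with the sharp Sobolev inequality for the Riesz potential $I_{\mu/2}$) — both to pin down the extremals exactly and to exclude loss of compactness (concentration or spreading) along maximizing sequences.
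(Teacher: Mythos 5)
This proposition is a classical result that the paper does not prove at all: it is quoted verbatim from Lieb--Loss \cite[Theorem~4.3]{LL} (with the sharp-constant and extremal statements going back to Lieb's sharp Hardy--Littlewood--Sobolev theorem). Your outline --- weak Young/Marcinkiewicz for the inequality with a finite constant, and Riesz rearrangement plus conformal invariance plus the Carlen--Loss competing-symmetries argument for the sharp constant and the classification of extremals --- is precisely the standard proof found in that reference, and you correctly identify that the only genuinely hard point is the rigidity at the conformal exponents, where rearrangement alone does not suffice. So there is nothing to correct; your sketch is an accurate account of the argument the paper is implicitly invoking.
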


According to Proposition \ref{pro1.1}, the functional
$$
\int_{\R^{N}}\int_{\R^{N}}\frac{|u(x)|^{p}|v(y)|^{p}}{|x-y|^{\mu}}dxdy
$$
is well defined in $H^1(\R^N)\times H^1(\R^N)$ if
$\frac{2N-\mu}{N}\leq p\leq\frac{2N-\mu}{N-2}$.
Here, it is quite natural to call $\frac{2N-\mu}{N}$ the lower Hardy-Littlewood-Sobolev critical exponent and $2_{\mu}^{\ast}=\frac{2N-\mu}{N-2}$ the upper Hardy-Littlewood-Sobolev critical exponent.

For the upper critical case, by using the moving plane methods in integral developed in \cite{CLO1,CLO2} ,  Lei \cite{Lei}, Du and Yang \cite{DY}, Guo et al. \cite{Guo2019} classified independently the positive solutions of the critical Hartree equation
\begin{equation}\label{eq1.5}
	-\Delta u=\Big(|x|^{-\mu}\ast |u|^{2_{\mu}^{\ast}}\Big)|u|^{2_{\mu}^{\ast}-2}u
	\ \ \mbox{in}\ \R^N,
\end{equation}
and proved that every positive
solution of \eqref{eq1.5} must assume the form
\begin{equation}\label{REL}
	U_{z,\lambda}(x)=S^{\frac{(N-\mu)(2-N)}{4(N-\mu+2)}}C(N,\mu)^{\frac{2-N}{2(N-\mu+2)}}[N(N-2)]^{\frac{N-2}{4}}\Big(\frac{\lambda}{1+\lambda^{2}|x-z |^{2}}\Big)^{\frac{N-2}{2}}.
\end{equation}
In \cite{GY} the authors
considered the Br\'{e}zis-Nirenberg type problem and established the existence, multiplicity and nonexistence of solutions for the nonlinear Choquard equation in bounded domain. Ghimenti and Pagliardini  in \cite{GP} studied the existence of multiple solutions for the Choquard equation with slightly subcritical exponent. In \cite{AGSY}, by investigating the ground states of the critical Choquard equation with constant coefficients, the authors studied the semiclassical limit problem for the singularly perturbed Choquard equation in $\R^3$ and characterized the concentration behavior by variational methods. The planar case was considered in \cite{ACTY}, where the authors established the existence of a ground state for the limit problem with critical exponential growth 
and then they also studied the concentration around the global minimum set. Gao and Yang in \cite{ GY3} investigated the existence result for the strongly indefinite Choquard equation with upper critical exponent in the whole space. In \cite{GSYZ}, Gao et al. studied the critical Choquard equations
with subcritical perturbation and potential functions that might change sign and proved the existence of Mountain-Pass solution via a nonlocal version of the concentration-compactness principle.  In \cite{DGY}, Ding et al. established a global compactness lemma and proved the multiplicity of high energy semiclassical states for a class of critical Choquard equations without lower perturbation using the Ljusternik-Schnirelman theory methods. In \cite{XZ1}, the authors studied the existence of saddle type solutions. For recent progress on the study of the Choquard equation we may refer the readers to \cite{ANY,  CT, DGY1, GY, guide} for details.

The aim of this paper is to establish the existence of infinitely many solutions for the critical Choquard equation with an axisymmetric potential $V(x)$ of a special form,
\begin{equation}\label{CFL}
	-\Delta u+  V(|x'|,x'')u
	=\Big(|x|^{-4}\ast |u|^{2}\Big)u\hspace{4.14mm}\mbox{in}\hspace{1.14mm} \mathbb{R}^6,
\end{equation}
where $(x', x'')\in \R^{2}\times\R^{4}$, the potential $V\geq0$ is bounded, belongs to $C^{1}$ and $V\not\equiv0$. The exponent $p=2$ is critical since $N=6$ and $\mu=4$.  We assume that the function $r^{2}V(r, x'')$ has a stable topologically nontrivial critical point in the following sense, which was introduced in \cite{PWY}:

\begin{itemize}
	\item[$\textbf{(V)}$]  The function $r^{2}V (r, x'')$ has a critical point $(r_0, x_0'')$ such that $r_0 > 0$, $V (r_0, x_0'') > 0$, and $$deg(\nabla(r^{2}V (r, x'')),(r_0, x_0'')) \neq 0.$$
\end{itemize}
The main result of this paper is the following theorem.

\begin{thm}\label{EXS}
	If $V(|x'|, x'')$ satisfies assumption $(V)$, then problem \eqref{CFL} has infinitely many solutions in $H^1(\R^6)$ and the energy of the solutions diverges to $+\infty$.
\end{thm}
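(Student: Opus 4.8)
The plan is to use a finite-dimensional Lyapunov–Schmidt reduction built on the limiting profile \eqref{REL}, arranging the approximate solutions into symmetric clusters spread near the critical circle $\{|x'|=r_0,\ x''=x_0''\}$, and then to locate the true solutions by a \emph{local} Pohozaev identity rather than by differentiating a reduced energy in the usual way. Concretely, for a large integer $k$ (the number of bumps) I would fix a small scale parameter and place $k$ copies of the Aubin–Talenti-type bubble $U_{z,\lambda}$ at the points $z_j = \big(r\cos\tfrac{2\pi j}{k},\, r\sin\tfrac{2\pi j}{k},\, x''\big)$, $j=1,\dots,k$, with $r$ close to $r_0$, $x''$ close to $x_0''$, and $\lambda$ large (of order depending on $k$, so that the bumps interact weakly); the ansatz is $W_{r,x'',\lambda}=\sum_{j=1}^k U_{z_j,\lambda}$, and one restricts from the outset to the space $H_s$ of functions that are axially symmetric in $x'$-rotations matching the $k$-fold discrete symmetry and even/symmetric in the remaining variables, so that the parameters reduce to $(r,x'',\lambda)\in \R^+\times\R^4\times\R^+$, a \emph{finite} number regardless of $k$. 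This symmetry reduction is what makes the bumps genuinely indistinguishable and cuts the reduced problem down to a controllable dimension.

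The main steps are then: (1) set up the right function-space framework — since $N=6$, $\mu=4$, and $p=2_\mu^\ast = 2$, we have $H^1(\R^6)$ with the Hardy–Littlewood–Sobolev–critical nonlocal term, and the linearized operator at a single bubble $U_{z,\lambda}$ has a known, nondegenerate (modulo the obvious invariances) kernel spanned by $\partial_z U$ and $\partial_\lambda U$; (2) estimate the error $\cl E = -\Delta W + V W - (|x|^{-4}\ast W^2)W$ in a suitable weighted norm, showing it is small in the interaction region, with the key terms being the cross interactions between distinct bubbles (which decay like a negative power of $k$) and the potential term $\int V\,U_{z_j,\lambda}^2$, whose size is governed precisely by $r^2 V(r,x'')$ evaluated near $(r_0,x_0'')$; (3) invert the linearized operator on the orthogonal complement of the approximate kernel, uniformly in the parameters, to solve the auxiliary equation and produce a genuine solution $W_{r,x'',\lambda}+\phi$ of the projected equation, with $\phi$ controlled in norm; (4) derive local Pohozaev identities — multiply the equation by $x\cdot\nabla u$ and by $u$, integrate over a ball $B_\rho(z_j)$ of intermediate radius around one bump, and extract the finitely many scalar equations in $(r,x'',\lambda)$ whose solvability is equivalent to killing the Lagrange multipliers; (5) show that the leading term in these identities is a nonzero multiple of $\nabla_{(r,x'')}\big(r^2 V(r,x'')\big)$ plus lower-order (in $1/k$ or in $\phi$) corrections, so that assumption $\textbf{(V)}$ and its nonzero local degree force, via a degree-theoretic/fixed-point argument, the existence of a solution $(r_k,x_k'',\lambda_k)$ of the reduced system; (6) conclude that for each large $k$ this yields a solution $u_k$, that distinct $k$ give geometrically distinct solutions, and that the energy, being essentially $k$ times the single-bubble energy (the HLS-critical Sobolev quotient) plus small corrections, diverges to $+\infty$ as $k\to\infty$.

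The principal obstacle, and the technical heart of the paper, is step (4)–(5): deriving the local Pohozaev identities in a form clean enough to see the gradient of $r^2V$ emerge as the leading term, while keeping rigorous control of all boundary terms on $\partial B_\rho(z_j)$ and of the nonlocal convolution contributions $(|x|^{-4}\ast u^2)$, which do not localize and therefore produce nontrivial tails that must be shown to be of lower order. Two features specific to the Choquard setting make this harder than in the local case: first, the Pohozaev identity for the nonlocal term carries its own scaling weight (here exactly matched to $N=6$, $\mu=4$), so one must check the critical cancellations are exact; second, the interaction between far-apart bubbles enters the convolution term and one must quantify, via the Hardy–Littlewood–Sobolev inequality of Proposition~\ref{pro1.1} and sharp decay estimates on $U_{z,\lambda}$, that these cross terms are summable in $j$ and negligible compared to the potential term. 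A secondary difficulty is the uniform invertibility in step (3): because $\lambda$ is allowed to be large and the bumps nearly touch the limiting circle, one needs the linear estimate to be uniform in $k$, which requires carefully chosen weighted norms adapted to the $k$-bump configuration. Once these estimates are in place, the degree condition in $\textbf{(V)}$ plugs directly into a Brouwer-degree argument on the reduced finite-dimensional problem, and the conclusion of Theorem~\ref{EXS} follows.
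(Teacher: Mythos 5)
Your proposal follows essentially the same route as the paper: a $k$-fold symmetric multi-bubble ansatz built from the classified solutions \eqref{REL} near the critical point of $r^2V$, a Lyapunov--Schmidt reduction in weighted norms resting on the nondegeneracy of the critical Hartree bubble, local Poho\v{z}aev identities (in the radial/$x''$/dilation directions) to kill the multipliers and reduce to $\nabla(r^2V)=o(1)$ plus a $\lambda$-balancing equation, and finally the degree condition in \textbf{(V)} to solve the reduced system, with the energy growing like $m$ times the single-bubble energy. The only notable implementation differences are that the paper localizes the ansatz with a cut-off $\xi$ and takes the Poho\v{z}aev identities over a symmetric neighborhood $D_\rho$ of the whole circle rather than over a ball around a single bump, but these do not change the substance of the argument.
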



As far as we are aware, this is the first result of its kind for nonlocal Choquard equations with axisymmetric potentials. We will  construct solutions of \eqref{CFL} with large number of bubbles in Theorem \ref{EXS} by using a reduction argument together with the novel local Poho\v{z}aev's  identities. The idea is inspired by recent works \cite{CWY, DLY,GPY, WY1} for non-singularly perturbed elliptic problems. It is well understood that the proofs of the existence of solutions with large number of bubbles for critical elliptic equations by reduction arguments depends heavily on the nondegeneracy of the solutions of the critical Lane-Emden equation. It is well known that the following equation
\begin{equation}\label{lcritical}
	-\Delta u=u^{\frac{N+2}{N-2}},~~x\in\mathbb{R}^{N}.
\end{equation}
has a unique family of positive two-parameter solutions of the form
\begin{equation}\label{U0}
	U_{\xi,\lambda}(x):=[N(N-2)]^{\frac{N-2}{4}}\Big(\frac{\lambda}{1+\lambda^2|x-\xi|^{2}}\Big)^{\frac{N-2}{2}}.
\end{equation}
Furthermore, equation \eqref{lcritical} has
an $(N+1)$-dimensional manifold of solutions given by
$$
\mathcal{Z}=\left\{z_{\lambda,\xi}=[N(N-2)]^{\frac{N-2}{4}}\Big(\frac{\lambda}{+\lambda^2|x-\xi|^{2}}\Big)^{\frac{N-2}{2}},
\xi\in\mathbb{R}^{N}, t\in\mathbb{R}^{+}\right\}.
$$
For every $Z\in \mathcal{Z}$, it is said to be nondegenerate in the sense that the linearized equation
\begin{equation}\label{Linearized}
	-\Delta v=Z^{\frac{4}{N-2}}v
\end{equation}
in $D^{1,2}(\mathbb{R}^N)$ only admits solutions of the form
$$
\eta=aD_{t}Z+\mathbf{b}\cdot\nabla Z,
$$
where $a\in\mathbb{R},\mathbf{b}\in\mathbb{R}^{N}$. In \cite{WY1}, Wei and Yan developed 
the technique that allowed to study the prescribed scalar curvature problem on $\mathbb{S}^N$
$$
-\Delta_{\mathbb{S}^N} u +\frac{N(N-2)}{2}u= K(x)u^{\frac{n+2}{n-2}}\hspace{4.14mm}\mbox{on}\hspace{1.14mm}  \mathbb{S}^N.
$$
Assuming that $K(x)$ is positive and rotationally symmetric and has a local maximum point between the poles, by using the stero-graphic projection, the prescribed scalar curvature problem can be reduced into
$$
-\Delta u = K(x)u^{\frac{n+2}{n-2}}\hspace{4.14mm}\mbox{in}\hspace{1.14mm}  \R^N.
$$
The authors took the number of the bubbles of the solutions as parameter and proved the existence of infinitely many non-radial positive solutions whose energy can be made arbitrarily large. We may also turn to the works by Deng, Lin, Yan \cite{DLY}, Guo, Peng, Yan \cite{GPY} and Li, Wei, Xu \cite{LWX} for the existence and local uniqueness of multi-bump solutions.
For the critical Schr\"{o}dinger equation
$$
-\Delta u+V(|x|)u = u^{\frac{n+2}{n-2}}\hspace{4.14mm}\mbox{in}\hspace{1.14mm}  \R^N,
$$
Chen, Wei and Yan \cite{CWY}
applied the reduction argument to study the existence of infinitely many positive solutions in the case of a  radially symmetric potentials $V$.
In \cite{PWY}, Peng, Wang and Yan developed a new idea that allowed also to construct bubbling solutions concentrating at saddle points of some functions. They used the Poho\v{z}aev identities to find algebraic equations which determine the location of the bubbles. Results on the existence of infinitely many solutions for other elliptic problems can also be found in \cite{AW, AWZ, WWY1, WWY2, WY2, WY3} and the references therein.

Define
$$\aligned
H_{s}=\Big\{&u\in D^{1,2}(\mathbb{R}^6),u(x_{1},-x_{2},x'')=u(x_{1},x_{2},x''),\\
&\hspace{4mm}u(r\cos\theta,r\sin\theta,x'')=u\Big(r\cos(\theta+\frac{2j\pi}{m}),r\sin(\theta+\frac{2j\pi}{m}),x''\Big)\Big\}
\endaligned$$
and let
$$
z_{j}=\Big(\overline{r}\cos\frac{2(j-1)\pi}{m},\overline{r}\sin\frac{2(j-1)\pi}{m},\overline{x}''\Big), \ j=1,\cdot\cdot\cdot,m,
$$
where $\overline{x}''$ is a vector in $\mathbb{R}^{4}$. By the weak symmetry of $V(x)$, we have $V (z_j ) = V (\overline{r}, \overline{x}'')$, $j = 1,\cdots, m$.
We will use $U_{z_j,\lambda}$ (see \eqref{REL}) as an approximate solution. Let $\delta> 0$ be a small
constant, such that $r^{2}V (r, x'') > 0$ if $|(r, x'')-(r_0, x_0'' )|\leq 10\delta$. Let $\xi(x) = \xi(|x'|, x'')$ be a
smooth function satisfying $\xi= 1$ if $|(r, x'')-(r_0, x_0'' )|\leq\delta$, $\xi= 0$ if $|(r, x'')-(r_0, x_0'' )| \geq2\delta$,
and $0\leq \xi\leq 1$. Denote
$$
Z_{z_j,\lambda}(x)=\xi U_{z_j,\lambda}(x), \
Z_{\overline{r},\overline{x}'',\lambda}(x)=\sum_{j=1}^{m}Z_{z_j,\lambda}(x), \
Z_{\overline{r},\overline{x}'',\lambda}^{\ast}(x)=\sum_{j=1}^{m}U_{z_j,\lambda}(x),
$$
and
$$
Z_{j,1}=\frac{\partial Z_{z_j,\lambda}}{\partial \lambda}, Z_{j,2}=\frac{\partial Z_{z_j,\lambda}}{\partial \overline{r}},
Z_{j,k}=\frac{\partial Z_{z_j,\lambda}}{\partial \overline{x}_{k}''},
\ \mbox{for }k=3,\cdot\cdot\cdot,N, \ j=1,\cdot\cdot\cdot,m.
$$

In this paper, we always assume that $m > 0$ is a large integer, $\lambda\in[L_{0}m^{2},L_{1}m^{2}]$
for some constants $L_1 > L_0 > 0$ and
$$
|(\overline{r}, \overline{x}'')-(r_0, x_0'')|\leq\vartheta<\delta,
$$
where $\vartheta> 0$ is a small constant.

In order to prove Theorem \ref{EXS}, we will prove the following result.
\begin{thm}\label{EXS1}
	Under the assumptions of Theorem \ref{EXS}, there exists a positive integer $m_0>0$, such that for any integer $m\geq m_0$, \eqref{CFL} has a solution $u_m$ of the form
	$$
	u_m=Z_{\overline{r}_{m},\overline{x}_{m}'',\lambda_{m}}+
	\phi_{\overline{r}_{m},\overline{x}_{m}'',\lambda_{m}}=\sum_{j=1}^{m}\xi U_{z_j,\lambda_{m}}+\phi_{\overline{r}_{m},\overline{x}_{m}'',\lambda_{m}},
	$$
	where $\phi_{\overline{r}_{m},\overline{x}_{m}'',\lambda_{m}}\in H_{s}$ and $\lambda_{m}\in\Big[L_0m^{2},L_1m^{2}\Big]$. Moreover, as $m\rightarrow\infty$, $(\overline{r}_m, \overline{x}_m'')\rightarrow(r_0, x_0'')$, and $\lambda_{m}^{-2}\|\phi_{\overline{r}_{m},\overline{x}_{m}'',\lambda_{m}}\|_{L^{\infty}}\rightarrow0$.
\end{thm}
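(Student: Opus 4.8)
The strategy is a finite-dimensional Lyapunov--Schmidt reduction carried out in the symmetric space $H_s$, followed by a resolution of the resulting finite-dimensional problem by means of local Poho\v{z}aev identities rather than a direct expansion of the reduced energy; the latter is delicate here because $(r_0,x_0'')$ is only assumed to be a topologically nontrivial critical point of $r^2V$, not an extremum. Let $I$ denote the energy functional associated with \eqref{CFL},
$$
I(u)=\frac12\int_{\R^6}\big(|\nabla u|^2+V(|x'|,x'')\,u^2\big)\,dx-\frac14\int_{\R^6}\int_{\R^6}\frac{|u(x)|^2|u(y)|^2}{|x-y|^4}\,dx\,dy,
$$
which by Proposition \ref{pro1.1} is well defined and of class $C^2$, and whose critical points in $H_s$ correspond, by the principle of symmetric criticality, to solutions of \eqref{CFL}. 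Fix parameters $(\overline{r},\overline{x}'',\lambda)$ in the admissible range $|(\overline{r},\overline{x}'')-(r_0,x_0'')|\le\vartheta$, $\lambda\in[L_0m^2,L_1m^2]$; write $Z=Z_{\overline{r},\overline{x}'',\lambda}=\sum_{j=1}^m\xi U_{z_j,\lambda}$, let $E=E_{\overline{r},\overline{x}'',\lambda}\subset H_s$ be spanned by the symmetrized functions $\sum_{j=1}^m Z_{j,k}$, $k=1,\dots,N$, and split $H_s=E\oplus E^{\perp}$ with orthogonal projection $Q$ onto $E^{\perp}$. We look for a solution of the form $u=Z+\phi$ with $\phi\in E^{\perp}$, so that $I'(u)=0$ decouples into the auxiliary equation $QI'(Z+\phi)=0$ and the $N$-dimensional bifurcation equation $(\mathrm{Id}-Q)I'(Z+\phi)=0$.

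\smallskip
\noindent\textbf{Step 1: the auxiliary equation.} First one estimates the error $\|QI'(Z)\|$ in a suitable (weighted) norm. It has three sources: the truncation $\xi$, which contributes exponentially small terms since $U_{z_j,\lambda}$ decays like $|x-z_j|^{-4}$ and $\lambda\sim m^2$; the potential term $\int VZ^2$, of size $\sim V(\overline{r},\overline{x}'')\lambda^{-2}$ per bubble; and the Riesz-potential interaction $\iint|x-y|^{-4}U_{z_i,\lambda}^2U_{z_j,\lambda}^2\sim\lambda^{-4}|z_i-z_j|^{-4}$ between distinct bubbles, which when summed over nearest neighbours on the circle of radius $\overline{r}$ is of order $m^4\lambda^{-4}\overline{r}^{-4}$ per bubble. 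The crucial linear step is to prove that $QI''(Z)$ is invertible on $E^{\perp}$, with inverse bounded uniformly in $m$ and in the parameters; this uses the nondegeneracy of the bubble $U_{z,\lambda}$ as a solution of the critical Hartree equation \eqref{eq1.5} — whose linearization has kernel exactly the $(N+1)$-dimensional span of $\partial_\lambda U$ and $\partial_{z_i}U$, $i=1,\dots,N$ — together with the fact that the $m$ bubbles are mutually far apart (spacing comparable to $\overline{r}/m$, while each has width $\lambda^{-1}\sim m^{-2}$), so that the coupling between them, including the nonlocal cross terms in the quadratic form, is a small perturbation. Granting this coercivity, the contraction mapping principle applied to $\phi\mapsto-(QI''(Z))^{-1}Q\big(I'(Z+\phi)-I'(Z)-I''(Z)\phi\big)$ yields a unique small $\phi=\phi_{\overline{r},\overline{x}'',\lambda}\in E^{\perp}$, of class $C^1$ in the parameters and satisfying $\|\phi\|\le C\|QI'(Z)\|$. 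A bootstrap through interior elliptic estimates for the Choquard-type equation, using that $|x|^{-4}$ is locally integrable in $\R^6$ and that the Riesz potential $|x|^{-4}*|u_m|^2$ is bounded, then upgrades this to $\lambda^{-2}\|\phi_{\overline{r},\overline{x}'',\lambda}\|_{L^\infty}\to0$ as $m\to\infty$, uniformly in the admissible range.

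\smallskip
\noindent\textbf{Step 2: the bifurcation equation via local Poho\v{z}aev identities.} With $\phi$ as above, $u_m:=Z+\phi$ satisfies $I'(u_m)=\sum_{j,k}c_{j,k}Z_{j,k}$ for some Lagrange multipliers $c_{j,k}$, and one must choose $(\overline{r}_m,\overline{x}_m'',\lambda_m)$ so that every $c_{j,k}$ vanishes. Rather than differentiating $I(Z+\phi)$, I would test the equation for $u_m$ against the vector fields $\zeta_j(x)\langle x-z_j,\nabla u_m\rangle$, $\zeta_j(x)\,\partial_\rho u_m$ ($\rho=|x'|$) and the components of $\zeta_j(x)\,\nabla_{x''}u_m$, where $\zeta_j$ is a fixed cutoff supported in a small ball about $z_j$, and integrate. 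Since $\phi$ is orthogonal to the approximate kernel and pointwise negligible by Step 1, the boundary and interior terms of the resulting local Poho\v{z}aev identities differ from those for the single bubble $U_{z_j,\lambda}$ only by lower-order errors. One is thus led to an algebraic system which, to leading order, balances the potential against the bubble interaction: the scaling ($\langle x-z_j,\nabla u_m\rangle$) identity reads
$$
\frac{a_1 V(\overline{r},\overline{x}'')}{\lambda^2}-\frac{a_2\,m^4}{\lambda^4\,\overline{r}^4}=\text{lower order},
$$
which pins $\lambda_m$ inside $[L_0m^2,L_1m^2]$ as a function of $(\overline{r},\overline{x}'')$, while the $\partial_\rho$- and $\partial_{x''}$-identities, after inserting this value of $\lambda$, reduce to
$$
\nabla_{(r,x'')}\big(r^2V(r,x'')\big)\big|_{(\overline{r},\overline{x}'')}=\text{lower order}.
$$
Since by $(V)$ the map $\nabla(r^2V)$ has nonzero Brouwer degree at $(r_0,x_0'')$ and $r_0^2V(r_0,x_0'')>0$, homotopy invariance of the degree furnishes, for every sufficiently large $m$, a zero $(\overline{r}_m,\overline{x}_m'')\to(r_0,x_0'')$ of this perturbed system; then all $c_{j,k}=0$, and $u_m=Z_{\overline{r}_m,\overline{x}_m'',\lambda_m}+\phi_{\overline{r}_m,\overline{x}_m'',\lambda_m}$ is the desired solution of \eqref{CFL}, with $I(u_m)=m\,A_0+o(m)\to+\infty$, where $A_0>0$ is the energy of a single bubble.

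\smallskip
\noindent\textbf{The main obstacle} lies entirely in the nonlocal nonlinearity, and is precisely why local Poho\v{z}aev identities (in the spirit of \cite{PWY}) are needed here instead of an expansion of the reduced energy. First, both the error estimate and the coercivity of $QI''(Z)$ require summing the interactions $\iint|x-y|^{-4}U_{z_i,\lambda}^2U_{z_j,\lambda}^2$ over all $O(m^2)$ pairs of bubbles and isolating the nearest-neighbour contribution of order $m^4\lambda^{-4}\overline{r}^{-4}$ — a genuinely different estimate from its local counterpart in \cite{CWY,WY1}. Second, and more seriously, the ``local'' Poho\v{z}aev identities are not truly local: testing the equation for $u_m$ against $\zeta_j\,\partial u_m$ produces a term containing the global quantity $\big(|x|^{-4}*|u_m|^2\big)$ integrated against functions supported near $z_j$, so one must split this convolution into a near-field part (handled as for a single bubble) and a far-field part — the contributions of the other $m-1$ bubbles and of the tail of $U_{z_j,\lambda}$ — and show the latter is of lower order, uniformly in $m$; the assumption $r_0>0$ is used here to keep every bubble at distance comparable to $r_0$ from the origin, where $|x|^{-4}$ is singular. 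Controlling these nonlocal tails, both in the reduction step and in the Poho\v{z}aev identities, is the technical heart of the argument.
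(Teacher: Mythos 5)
Your overall architecture --- a Lyapunov--Schmidt reduction in the symmetric class resting on the nondegeneracy of the Hartree bubble (Lemma \ref{nondegeneracy}), followed by Poho\v{z}aev identities and a degree argument based on $(V)$ --- is the paper's. Step 1 corresponds to Lemmas \ref{C1}--\ref{C3} (the paper works in the weighted norms $\|\cdot\|_{\ast}$, $\|\cdot\|_{\ast\ast}$ and obtains the uniform invertibility by a blow-up contradiction rather than by coercivity of the quadratic form, and its orthogonality conditions are the ones adapted to the linearized Choquard operator rather than energy-orthogonality, but these are equivalent routes). The genuine gaps are in Step 2.

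First, you localize the identities on small balls about the individual points $z_j$. The bubbles are only $O(\overline{r}/m)$ apart, so disjoint balls must have radius $d\lesssim 1/m$, and on $\partial B_{d}(z_j)$ the boundary fluxes (e.g.\ $\int_{\partial B_d}\langle x-z_j,\nu\rangle|\nabla u_m|^2\,ds\sim\lambda^{-4}d^{-4}\sim m^{-4}$, and likewise the cross terms with the neighbouring bubbles) are of exactly the same order as the interior main term $V\int U_{z_j,\lambda}^2\sim\lambda^{-2}\sim m^{-4}$. They are therefore not ``lower-order errors'': they carry the leading-order interaction and would have to be expanded exactly, via a sharp description of $u_m$ away from the concentration set, which is a substantially harder computation. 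The paper avoids this entirely by taking \eqref{d1}--\eqref{d2} over the fixed-size region $D_{\rho}$ containing \emph{all} $m$ bubbles, whose boundary stays at distance $\geq 3\delta$ from every $z_j$; there $Z_{\overline{r},\overline{x}'',\lambda}\equiv 0$, the boundary terms involve only $\phi$, and a mean-value choice of $\rho\in(3\delta,4\delta)$ combined with Lemma \ref{D4} makes them $O(m\lambda^{-2-\varepsilon})$. Second, your assignment of identities to parameters does not obviously produce the right reduced system. In the paper $\lambda$ is fixed by the ordinary reduction equation \eqref{d3} (pairing with $\partial Z/\partial\lambda$, expanded in \eqref{dr0}), not by a Poho\v{z}aev identity, while the function $r^2V$ emerges because the dilation identity about the \emph{origin} yields $\int_{D_\rho}\big(V+\tfrac12\langle x,\nabla V\rangle\big)u_m^2$, and subtracting the $x''$-translation identities \eqref{d11} leaves $\int_{D_\rho}\tfrac{1}{2r}\partial_r(r^2V)\,u_m^2$ as in \eqref{d19}. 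Your scheme --- scaling about $z_j$ to pin $\lambda$, a ``$\partial_\rho$'' identity for $\overline{r}$ --- would at face value give $\partial_r V(\overline{r},\overline{x}'')=o(1)$ rather than $\partial_r(r^2V)=o(1)$; recovering the factor $r^2$ would require matching the constants of the radial interaction force against those in the $\lambda$-equation, which you do not carry out, and $(V)$ says nothing about critical points of $V$ itself. Finally, you assert but do not prove that imposing the identities forces all multipliers $c_l$ to vanish; this requires showing the linear system they satisfy is nondegenerate, which is the content of Lemma \ref{D1}.
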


Before we finish this section, we would like to point out the main difficulties in the study of nonlocal Choquard equations.  First of all, the uniqueness and nondegeneracy of the ground--states is largely open. The subcritical case for \eqref{Nonlocal.S1} was partially solved in \cite{Len, WW, X}.   In \cite{Len, WW} the authors proved the uniqueness and nondegeneracy for the case $N=3$, $\mu=1$ and $q=2$.  Chen \cite{Cgy} generalized the nondegeneracy result to higher dimensions and Xiang \cite{X} generalized the results further by showing the nondegeneracy when $q>2$ is close $2$. For the critical case \eqref{eq1.5}, the nondegeneracy property is mostly open. As far as we know, the only result is \cite{DY}, there the authors established the nondegeneracy result when $\mu$ is close to $N$.  Recently, the last two authors of the present paper obtained the nondegeneracy result for $N=6$, $\mu=4$. In addition, the nonlocal convolution part in \eqref{Nonlocal.S1} makes it much more difficult to establish the local Poho\v{z}aev identities and to obtain the error estimates.


The paper is organized as follows: In Section 2, we give some preliminary results including a nondegeneracy result for the critical Hartree equation \eqref{6}. In Section 3, we carry out the reduction procedure for the critical Choquard equation \eqref{CFL}. In Section 4, we prove our main results by establishing local Poho\v{z}aev identities.

\section{Preliminary results}
In this section, we will introduce some estimates involving the convolution term and introduce a nondegeneracy result for the limit critical Hartree equation \eqref{6}.

\begin{lem}\label{B2} (Lemma B.1, \cite{WY1}) For each fixed $k$ and $j$, $k\neq j$,  let
	$$
	g_{k,j}(x)=\frac{1}{(1+|x-z_{j}|)^{\alpha}}\frac{1}{(1+|x-z_{k}|)^{\beta}},
	$$
	where $\alpha\geq 1$ and $\beta\geq1$ are two constants.
Then, for any constants $0<\delta\leq\min\{\alpha,\beta\}$, there is a constant $C>0$, such that
	$$
	g_{k,j}(x)\leq\frac{C}{|z_{k}-z_{j}|^{\delta}}\Big(\frac{1}{(1+|x-z_{j}|)^{\alpha+\beta-\delta}}+\frac{1}{(1+|x-z_{k}|)^{\alpha+\beta-\delta}}\Big).
	$$
\end{lem}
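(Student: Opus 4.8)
The plan is to prove Lemma \ref{B2} by a ``transfer of decay'' argument resting only on the triangle inequality. The one fact needed is that $|z_k-z_j|\le |x-z_j|+|x-z_k|$, whence $\max\{1+|x-z_j|,\,1+|x-z_k|\}\ge\frac12|z_k-z_j|$ for every $x\in\R^N$. Thus at least one of the two factors in $g_{k,j}(x)$ is comparable to $|z_k-z_j|$, and the idea is to peel a power $\delta$ off \emph{that} larger factor (gaining the prefactor $|z_k-z_j|^{-\delta}$) and then reattach the leftover power to the other centre. Accordingly, I would split $\R^N$ into the two regions $\{|x-z_j|\ge|x-z_k|\}$ and $\{|x-z_k|\ge|x-z_j|\}$ and argue on each separately.

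On the region $\{|x-z_j|\ge|x-z_k|\}$, where $1+|x-z_j|$ is the larger of the two factors, write $g_{k,j}(x)=\bigl(1+|x-z_j|\bigr)^{-\delta}\cdot\bigl(1+|x-z_j|\bigr)^{-(\alpha-\delta)}\bigl(1+|x-z_k|\bigr)^{-\beta}$. The first factor is at most $\bigl(2/|z_k-z_j|\bigr)^{\delta}$ by the inequality above. For the remaining product, since $\alpha-\delta\ge0$ and $1+|x-z_j|\ge1+|x-z_k|$ we have $\bigl(1+|x-z_j|\bigr)^{-(\alpha-\delta)}\le\bigl(1+|x-z_k|\bigr)^{-(\alpha-\delta)}$, so that $\bigl(1+|x-z_j|\bigr)^{-(\alpha-\delta)}\bigl(1+|x-z_k|\bigr)^{-\beta}\le\bigl(1+|x-z_k|\bigr)^{-(\alpha+\beta-\delta)}$. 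Hence $g_{k,j}(x)\le 2^{\delta}|z_k-z_j|^{-\delta}\bigl(1+|x-z_k|\bigr)^{-(\alpha+\beta-\delta)}$ on this region. On the complementary region $\{|x-z_k|\ge|x-z_j|\}$ the identical computation with the roles of $j,k$ and of $\alpha,\beta$ exchanged gives $g_{k,j}(x)\le 2^{\delta}|z_k-z_j|^{-\delta}\bigl(1+|x-z_j|\bigr)^{-(\alpha+\beta-\delta)}$. Adding these two pointwise bounds on their respective regions yields the assertion with $C=2^{\delta}$.

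There is no serious obstacle here: the only point requiring attention is to peel the factor $\delta$ off the correct (larger) term in each region, and the hypothesis $0<\delta\le\min\{\alpha,\beta\}$ is used exactly to keep the leftover exponents $\alpha-\delta$ and $\beta-\delta$ nonnegative, which is what makes the monotonicity step legitimate. It is worth recording the consequence that motivates the lemma, namely $\int_{\R^N}g_{k,j}(x)\,dx\le C\,|z_k-z_j|^{-\delta}$ whenever $\alpha+\beta-\delta>N$, which is the mechanism by which interaction integrals between bubbles centred at the well-separated points $z_j$ and $z_k$ will later be decoupled and estimated.
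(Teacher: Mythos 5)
Your argument is correct and is essentially the standard proof of this estimate (the paper itself gives no proof, merely citing Lemma B.1 of \cite{WY1}, whose proof is exactly this two-region splitting with the triangle inequality $\max\{1+|x-z_j|,1+|x-z_k|\}\ge\tfrac12|z_k-z_j|$). The hypothesis $0<\delta\le\min\{\alpha,\beta\}$ is used precisely where you say it is, and the constant $C=2^{\delta}$ is fine.
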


\begin{lem}\label{B3} (Lemma B.2, \cite{WY1}) For any constant $0<\delta<N-2$, $N\geq5$, there is a constant $C>0$, such that
	$$
	\int_{\mathbb{R}^{N}}\frac{1}{|x-y|^{N-2}}\frac{1}{(1+|y|)^{2+\delta}}dy\leq \frac{C}{(1+|x|)^{\delta}}.
	$$
\end{lem}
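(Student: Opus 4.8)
The plan is to estimate the integral by a crude but explicit decomposition of $\R^N$ into a few regions, chosen according to the relative sizes of $|x-y|$, $|y|$ and $R:=|x|$, so that on each region one of the two factors $|x-y|^{-(N-2)}$ and $(1+|y|)^{-(2+\delta)}$ can be pulled out as a power of $R$ and the remaining integral computed by one radial integration. I would treat the regimes $|x|\le 1$ and $|x|\ge 1$ separately, since the inequality to be proved is essentially scale-covariant only away from the origin.

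For $|x|\le 1$ the right-hand side is comparable to $1$, so it suffices to bound the integral by an absolute constant. I would split $\R^N=\{|y|\le 2\}\cup\{|y|>2\}$. On $\{|y|\le 2\}$ the weight $(1+|y|)^{-(2+\delta)}$ is bounded, $|x-y|\le 3$, and $\int_{|z|\le 3}|z|^{-(N-2)}\,dz<\infty$ because $N-2<N$; this is where $N\ge 5$ (in fact $N\ge 3$ suffices) keeps the Riesz kernel locally integrable. On $\{|y|>2\}$ one has $|x-y|\ge |y|-|x|\ge |y|/2$, so the integrand is $\le C|y|^{-(N-2)-(2+\delta)}=C|y|^{-(N+\delta)}$, which is integrable at infinity precisely because $\delta>0$.

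For $R:=|x|\ge 1$, using $1+|x|\le 2|x|$ it is enough to show the integral is $\le C R^{-\delta}$. I would decompose $\R^N$ into the four regions
$$
A=\{|y|\le R/2\},\quad B=\{|x-y|\le R/2\},\quad C_1=\{|y|>R/2,\ |x-y|>R/2,\ |y|<2R\},\quad C_2=\{|y|\ge 2R\},
$$
which cover $\R^N$, and estimate each piece. On $A$: $|x-y|\ge R/2$, so $|x-y|^{-(N-2)}\le C R^{-(N-2)}$, while $\int_A(1+|y|)^{-(2+\delta)}\,dy\le C R^{N-2-\delta}$, the growth rate being governed by the upper endpoint because $2+\delta<N$, i.e. $\delta<N-2$; the product is $C R^{-\delta}$. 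On $B$: $|y|\ge R/2$, so $(1+|y|)^{-(2+\delta)}\le C R^{-(2+\delta)}$, while $\int_{|z|\le R/2}|z|^{-(N-2)}\,dz=C R^{2}$; the product is $C R^{-\delta}$. On $C_1$: there $|x-y|^{-(N-2)}\le C R^{-(N-2)}$ and $(1+|y|)^{-(2+\delta)}\le C R^{-(2+\delta)}$, and $C_1\subset\{|y|<2R\}$, a set of volume $\le C R^{N}$, so the contribution is $\le C R^{-(N-2)-(2+\delta)+N}=C R^{-\delta}$. On $C_2$: $|y|\ge 2R$ forces $|x-y|\ge |y|/2$, so the integrand is $\le C|y|^{-(N+\delta)}$ and $\int_{|y|\ge 2R}|y|^{-(N+\delta)}\,dy=C\int_{2R}^{\infty}r^{-1-\delta}\,dr=C R^{-\delta}$. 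Summing the four contributions completes this regime, hence the lemma.

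I do not expect a genuine obstacle: the estimate is elementary. The only points requiring a little care are (i) choosing the regions so that they actually cover $\R^N$ while separating the singular scale $|x-y|\to 0$ and the decay scale $|y|\to\infty$ from the size $R$ of $x$, so that in each region one factor becomes a power of $R$; and (ii) tracking where the hypotheses are sharp, namely $\delta>0$ for integrability at infinity (region $C_2$ and the far part of the $|x|\le 1$ case) and $\delta<N-2$ for the growth rate $R^{N-2-\delta}$ of the weight integral over $A$.
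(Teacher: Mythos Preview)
Your proof is correct. The paper does not actually prove this lemma; it is quoted verbatim from \cite{WY1} (Lemma~B.2 there) and used as a black box. Your decomposition into the regions $A$, $B$, $C_1$, $C_2$ is precisely the standard argument, and it is the same strategy the paper itself reproduces in its proof of the next result, Lemma~\ref{B4}: split space according to balls of radius comparable to $|x|$ around the two singular points (here $0$ and $x$), pull out one factor as a power of $|x|$ on each piece, and integrate the remaining factor radially. Your tracking of where the hypotheses $\delta>0$ and $\delta<N-2$ enter is accurate and slightly more explicit than what one finds in the original.
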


Using the methods to prove Lemma B.2 in \cite{WY1}, we can also establish the following lemma.
\begin{lem}\label{B4}
	For $N=6$ and $1\leq i\leq m$, there is a constant $C>0$, such that
	$$
	|x|^{-4}\ast \frac{\lambda^{2}}{(1+\lambda|x-z_{i}|)^{6+\eta}}\leq \frac{C}{(1+\lambda|x-z_{i}|)^{4}},
	$$
where $\eta>0$.
\end{lem}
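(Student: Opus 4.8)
The plan is to remove the parameters $z_i$ and $\lambda$ by a scaling, reducing the claim to a single dimensionless convolution estimate, and then to prove that estimate by the same three–region decomposition used in the proof of Lemma \ref{B3}. First I would substitute $y = z_i + \lambda^{-1}w$ in the integral $\int_{\R^6}|x-y|^{-4}\frac{\lambda^{2}}{(1+\lambda|y-z_i|)^{6+\eta}}\,dy$. Since $dy=\lambda^{-6}\,dw$, $|x-y|=\lambda^{-1}|\lambda(x-z_i)-w|$ and $\lambda|y-z_i|=|w|$, every power of $\lambda$ cancels and the left–hand side becomes $\int_{\R^6}|\tilde x-w|^{-4}(1+|w|)^{-(6+\eta)}\,dw$ with $\tilde x:=\lambda(x-z_i)$. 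Hence it suffices to establish the parameter–free inequality $\int_{\R^6}\frac{dw}{|\tilde x-w|^{4}(1+|w|)^{6+\eta}}\le C(1+|\tilde x|)^{-4}$ uniformly in $\tilde x\in\R^6$; substituting back gives exactly the stated bound because $1+|\tilde x|=1+\lambda|x-z_i|$, and uniformity in $\tilde x$ yields uniformity in $x$, $z_i$, $\lambda$, and $i$.

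For $|\tilde x|\le 2$ the inequality is immediate, since its right–hand side is bounded below by a positive constant while the left–hand side is finite and uniformly bounded on bounded sets (local integrability of $|v|^{-4}$ in $\R^6$ together with integrability of $(1+|w|)^{-(6+\eta)}$, valid because $6+\eta>6$). So assume $|\tilde x|\ge 2$ and split $\R^6$ into $A_1=\{|w|\le|\tilde x|/2\}$, $A_2=\{|w-\tilde x|\le|\tilde x|/2\}$ and $A_3=\R^6\setminus(A_1\cup A_2)$. On $A_1$ one has $|\tilde x-w|\ge|\tilde x|/2$, so that piece is at most $C|\tilde x|^{-4}\int_{\R^6}(1+|w|)^{-(6+\eta)}\,dw\le C(1+|\tilde x|)^{-4}$. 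On $A_2$ one has $|w|\ge|\tilde x|/2$, hence $(1+|w|)^{-(6+\eta)}\le C(1+|\tilde x|)^{-(6+\eta)}$, and since $\int_{|v|\le|\tilde x|/2}|v|^{-4}\,dv\sim|\tilde x|^{2}$ in dimension $6$, that piece is at most $C|\tilde x|^{2}(1+|\tilde x|)^{-(6+\eta)}\le C(1+|\tilde x|)^{-(4+\eta)}$.

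For the far region $A_3$ I would invoke Lemma \ref{B3}. On $A_3$ one still has $|w|\ge|\tilde x|/2$, hence $1+|w|\ge\tfrac12(1+|\tilde x|)$; splitting the weight as $(1+|w|)^{-(6+\eta)}=(1+|w|)^{-(2+\eta)}(1+|w|)^{-4}$ and bounding the first factor by $C(1+|\tilde x|)^{-(2+\eta)}$ gives
$$
\int_{A_3}\frac{dw}{|\tilde x-w|^{4}(1+|w|)^{6+\eta}}\le\frac{C}{(1+|\tilde x|)^{2+\eta}}\int_{\R^6}\frac{dw}{|\tilde x-w|^{4}(1+|w|)^{4}}\le\frac{C}{(1+|\tilde x|)^{2+\eta}}\cdot\frac{C}{(1+|\tilde x|)^{2}},
$$
where the last step is Lemma \ref{B3} applied with $N=6$ and $\delta=2\in(0,N-2)$. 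Adding the three contributions, each of which is $\le C(1+|\tilde x|)^{-4}$ because $\eta>0$, finishes the proof. The only genuine obstacle is that Lemma \ref{B3} cannot be applied directly to the weight $(1+|w|)^{-(6+\eta)}$, since that would force $\delta=4+\eta>N-2=4$, outside its range; the remedy is precisely the peeling step above, in which the bound $|w|\gtrsim|\tilde x|$ on the far region is used to convert excess $w$–decay into $\tilde x$–decay and bring the residual integral into the admissible regime of Lemma \ref{B3}. The other place where $N=6$ is used is the elementary identity $\int_{|v|\le R}|v|^{-4}\,dv\sim R^{2}$ in the estimate on $A_2$.
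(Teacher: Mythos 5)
Your proof is correct and follows essentially the same route as the paper's: rescale to eliminate $\lambda$ and $z_i$, then split into the ball around the bubble centre, the ball around the kernel singularity, and the far region, with a peeling step in the far region. The only (immaterial) difference is that in the far region the paper peels off $(1+|\lambda(x-z_i)-y|)^{-4}\le C(1+d)^{-4}$ and checks directly that the remainder is integrable, whereas you peel off the excess decay of the weight and invoke Lemma~\ref{B3} with $\delta=2$; you also treat the case $|\tilde x|\le 2$ explicitly, which the paper leaves implicit.
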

\begin{proof}
First, we notice that
	$$
	|x|^{-4}\ast \frac{\lambda^{2}}{(1+\lambda|x-z_{i}|)^{6+\eta}}=
	\int_{\mathbb{R}^{6}}\frac{1}{|y|^{4}}\frac{\lambda^{2}}{(1+\lambda|x-z_{i}-y|)^{6+\eta}}dy
	=
	\int_{\mathbb{R}^{6}}\frac{1}{|y|^{4}}\frac{1}{(1+|\lambda x-\lambda z_{i}-y|)^{6+\eta}}dy.
	$$
	
	Let $d=\frac{\lambda}{2}|x-z_{i}|>1$. Then, we have
	$$
	\int_{B_{d}(0)}\frac{1}{|y|^{4}}\frac{1}{(1+|\lambda x-\lambda z_{i}-y|)^{6+\eta}}dy\leq \frac{C}{(1+ d)^{6+\eta}}\int_{B_{d}(0)}\frac{1}{|y|^{4}}dy\leq \frac{C d^{2}}{(1+ d)^{6+\eta}}\leq\frac{ C}{(1+ d)^{4+\eta}},
	$$
	and
	$$\aligned
	\int_{B_{d}(\lambda x-\lambda z_{i})}\frac{1}{|y|^{4}}\frac{1}{(1+|\lambda x-\lambda z_{i}-y|)^{6+\eta}}dy
	\leq \frac{1}{d^{4}}\int_{B_{d}(0)}\frac{1}{(1+|y|)^{6+\eta}}dy
\leq \frac{C}{(1+ d)^{4}}.
	\endaligned$$

	Suppose that $y\in\mathbb{R}^{6}\backslash (B_{d}(0)\cup B_{d}(\lambda x-\lambda z_{i}))$. Then
	$$
	|\lambda x-\lambda z_{i}-y|\geq\frac{1}{2}|\lambda x-\lambda z_{i}|, |y|\geq\frac{1}{2}|\lambda x-\lambda z_{i}|,
	$$
	and, we have
	$$
	\frac{1}{|y|^{4}}\frac{1}{(1+|\lambda x-\lambda z_{i}-y|)^{6+\eta}}\leq \frac{ C}{(1+ d)^{4}}\frac{1}{|y|^{4}}\frac{1}{(1+|\lambda x-\lambda z_{i}-y|)^{2+\eta}}.
	$$
	If $|y|\leq2|\lambda x-\lambda z_{i}|$, then
	$$
	\frac{1}{|y|^{4}}\frac{1}{(1+|\lambda x-\lambda z_{i}-y|)^{2+\eta}}\leq
	\frac{C}{|y|^{4}(1+|\lambda x-\lambda z_{i}|)^{2+\eta}}\leq
	\frac{C_{1}}{|y|^{4}(1+|y|)^{2+\eta}}.
	$$
	If $|y|\geq 2|\lambda x-\lambda z_{i}|$, then $|\lambda x-\lambda z_{i}-y|\geq|y|-|\lambda x-\lambda z_{i}|\geq\frac{1}{2}|y|$. As a result,
	$$
	\frac{1}{|y|^{4}}\frac{1}{(1+|\lambda x-\lambda z_{i}-y|)^{2+\eta}}\leq
	\frac{C}{|y|^{4}(1+|y|)^{2+\eta}}.
	$$
	Thus, we have
$$
\int_{\mathbb{R}^{N}\backslash (B_{d}(0)\cup B_{d}(\lambda x-\lambda z_{i}))}\frac{1}{|y|^{4}}\frac{1}{(1+|\lambda x-\lambda z_{i}-y|)^{6+\eta}}dy
\leq \frac{ C}{(1+ d)^{4}}\int_{\mathbb{R}^{N}}\frac{1}{|y|^{4}}\frac{1}{(1+|y|)^{2+\eta}}dy
\leq \frac{ C}{(1+ d)^{4}}.
$$
\end{proof}

Using \eqref{REL} and the identity (see (37) in \cite{DHQWF} for example)
\begin{equation}\label{eq3.18}
\int_{\R^N}\frac{1}{|x-y|^{2s}}\Big(\frac{1}{1+|y|^{2}}\Big)^{N-s}dy
=I(s)\Big(\frac{1}{1+|x|^{2}}\Big)^{s},\ \ 0 < s < \frac{N}{2},
\end{equation}
where
$$
I(s)=\frac{\pi^{\frac{N}{2}}\Gamma(\frac{N-2s}{2})}{\Gamma(N-s)}, \ \ \mbox{and }\Gamma(s)=\int_0^{+\infty} x^{s-1}e^{-x}\,dx, s>0,
$$
we have
$$
|x|^{-4}\ast |U_{z,\lambda}(x)|^{2}
=\int_{\R^N}\frac{U_{z,\lambda}^2(y)}{|x-y|^{4}}dy
=CU_{z,\lambda}(x),
$$
where $N\geq5$ and $C=I(2)S^{\frac{(N-\mu)(2-N)}{4(N-\mu+2)}}C(N,\mu)^{\frac{2-N}{2(N-\mu+2)}}[N(N-2)]^{\frac{N-2}{4}}$. So, we have the following lemma.
\begin{lem}\label{P0}
		For $N=6$ and $1\leq i\leq m$, there is a constant $C>0$, such that
	$$
	|x|^{-4}\ast |U_{z_{i},\lambda}(x)|^{2}
	=CU_{z_{i},\lambda}(x).
	$$
\end{lem}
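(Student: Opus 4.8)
\medskip
\noindent\textbf{Proof strategy.} This is a direct computation: once the explicit profile \eqref{REL} is in hand, the convolution $|x|^{-4}\ast|U_{z_i,\lambda}|^{2}$ can be evaluated in closed form by reducing it to the standard integral identity \eqref{eq3.18}, and the only real work is bookkeeping of the concentration parameter $\lambda$ and the centre $z_i$. The plan is therefore to normalise these away by a scaling plus translation, then invoke \eqref{eq3.18} with $s=2$, and finally to recognise the resulting expression as a multiple of $U_{z_i,\lambda}$.

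Concretely, by \eqref{REL} one has
$$
U_{z_i,\lambda}(y)^{2}=\kappa^{2}\,\lambda^{N-2}\Big(\frac{1}{1+\lambda^{2}|y-z_i|^{2}}\Big)^{N-2},
$$
where $\kappa=S^{\frac{(N-\mu)(2-N)}{4(N-\mu+2)}}C(N,\mu)^{\frac{2-N}{2(N-\mu+2)}}[N(N-2)]^{\frac{N-2}{4}}$ collects the constant prefactor. Substituting $y=z_i+\lambda^{-1}w$, so that $dy=\lambda^{-N}\,dw$, $\lambda^{2}|y-z_i|^{2}=|w|^{2}$ and $|x-y|=\lambda^{-1}|\lambda(x-z_i)-w|$, the power of $\lambda$ reduces via $\lambda^{N-2}\cdot\lambda^{-N}\cdot\lambda^{4}=\lambda^{2}$ and one is left with
$$
\int_{\R^{N}}\frac{U_{z_i,\lambda}(y)^{2}}{|x-y|^{4}}\,dy
=\kappa^{2}\,\lambda^{2}\int_{\R^{N}}\frac{1}{|\lambda(x-z_i)-w|^{4}}\Big(\frac{1}{1+|w|^{2}}\Big)^{N-2}\,dw.
$$
Applying \eqref{eq3.18} with $s=2$ is admissible because $0<2<N/2$ for $N\ge5$, and it fits the right-hand integral because $2s=4$ (the kernel) and $N-s=N-2$ (the exponent produced by squaring $U$). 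Hence that integral equals $I(2)\big(1+\lambda^{2}|x-z_i|^{2}\big)^{-2}$, so that
$$
|x|^{-4}\ast U_{z_i,\lambda}^{2}(x)=\kappa^{2}\,I(2)\,\lambda^{2}\Big(\frac{1}{1+\lambda^{2}|x-z_i|^{2}}\Big)^{2}.
$$

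For $N=6$ we have $(N-2)/2=2$, so by \eqref{REL} the right-hand side is exactly $\kappa\,I(2)\,U_{z_i,\lambda}(x)$; thus the lemma holds with $C=\kappa\,I(2)$, which is the constant recorded before the statement. There is no genuine obstacle here; the one point worth stressing is that the computation \emph{closes}, i.e.\ the convolution returns a multiple of $U_{z_i,\lambda}$ rather than of a profile with a different decay. This is precisely the dimensional balance $N=6$, $\mu=4$, $p=2$ (for which $2_{\mu}^{\ast}=2$): it forces the exponent $N-2$ generated by $U^{2}$ to agree with the one demanded by \eqref{eq3.18}, and it makes the output exponent $s=2$ coincide with $(N-2)/2$. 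In another dimension the same steps would still compute the convolution, but the outcome would not be proportional to $U_{z_i,\lambda}$.
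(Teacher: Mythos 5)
Your computation is correct and follows essentially the same route as the paper: both reduce the convolution to the integral identity \eqref{eq3.18} with $s=2$ after normalising out $\lambda$ and $z_i$, and both identify the constant as $C=I(2)\kappa$. Your closing remark that the identity only returns a multiple of $U_{z_i,\lambda}$ when $N=6$ (so that $s=2=(N-2)/2$) is a worthwhile clarification of the paper's slightly loose ``$N\geq 5$'' phrasing in the discussion preceding the lemma.
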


\begin{lem}\label{P1}
For every $i\neq1$, $\alpha >3$, there is a constant $C>0$, such that
\begin{equation}\label{p1}
	\int_{\mathbb{R}^{6}}\frac{1}{(1+|y-\lambda z_{1}|^{2})^{\alpha}}
	\frac{1}{(1+|y-\lambda z_{i}|^{2})^{2}}dy
	=\frac{C}{(\lambda|z_{1}-z_{i}|)^{4}}.
\end{equation}
\end{lem}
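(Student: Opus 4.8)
The plan is to reduce the integral to a convolution-type expression and then invoke the scaling structure together with the known asymptotics of the Newton-type kernel. First I would change variables $y\mapsto y+\lambda z_1$ to center the first factor at the origin, so that the integral becomes
$$
\int_{\mathbb{R}^{6}}\frac{1}{(1+|y|^{2})^{\alpha}}\frac{1}{(1+|y-\lambda(z_i-z_1)|^{2})^{2}}\,dy.
$$
Set $a=\lambda(z_i-z_1)$, so $|a|=\lambda|z_1-z_i|$, which is large (of order $\lambda/m\cdot m = \lambda$ up to constants, hence $\gtrsim m^2$) by the choice of parameters. The claim is then that this integral equals $C|a|^{-4}(1+o(1))$, or more precisely is bounded above and below by constant multiples of $|a|^{-4}$; I read the displayed equality \eqref{p1} as an asymptotic equivalence in the sense used throughout this type of paper.

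The key steps, in order: (i) split $\mathbb{R}^6$ into the three regions $B_{|a|/2}(0)$, $B_{|a|/2}(a)$, and the complement. (ii) On $B_{|a|/2}(0)$, the second factor is comparable to $(1+|a|^2)^{-2}\sim |a|^{-4}$, and $\int_{B_{|a|/2}(0)}(1+|y|^2)^{-\alpha}\,dy$ converges (since $\alpha>3$, so $2\alpha>6$) to a finite constant; this region contributes $\sim C|a|^{-4}$ and is the dominant term. (iii) On $B_{|a|/2}(a)$, the first factor is comparable to $|a|^{-2\alpha}$, and $\int_{B_{|a|/2}(a)}(1+|y-a|^2)^{-2}\,dy \le \int_{\mathbb{R}^6}(1+|z|^2)^{-2}\,dz$ which diverges logarithmically — so I would instead estimate $\int_{B_{|a|/2}(0)}(1+|z|^2)^{-2}\,dz \lesssim \log|a|$, giving a contribution $\lesssim |a|^{-2\alpha}\log|a| = o(|a|^{-4})$ since $\alpha>3$ forces $2\alpha>6>4$. (iv) On the complementary region both $|y|\ge|a|/2$ and $|y-a|\ge|a|/2$; here one uses Lemma \ref{B2} (with the substitution $(1+|y|)\le \sqrt2(1+|y|^2)^{1/2}$ etc.) to pull out a power of $|a|$ and reduce to a single convergent integral, yielding a contribution of order $|a|^{-\delta}$ for a suitable $\delta>4$, again negligible. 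Summing the three contributions gives the claimed $C|a|^{-4}$ behavior. Alternatively, and perhaps more cleanly, I would note that for $\alpha$ not an integer this is exactly a special case of the general estimate behind Lemma \ref{B3}/Lemma \ref{B4}: writing the second factor's "mass" as concentrated near $z_i$ and using that $(1+|\cdot|^2)^{-\alpha}$ decays like $|\cdot|^{-2\alpha}$ with $2\alpha>6$, the convolution $\big[(1+|\cdot|^2)^{-\alpha}\big]\ast\big[(1+|\cdot|^2)^{-2}\big]$ evaluated at $a$ behaves like $(1+|a|^2)^{-2}\int(1+|y|^2)^{-\alpha}\,dy$, which is $\sim C|a|^{-4}$.

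The main obstacle I anticipate is the borderline nature of the $(1+|y-a|^2)^{-2}$ factor in dimension $6$: $\int_{\mathbb{R}^6}(1+|z|^2)^{-2}\,dz=+\infty$, so one cannot crudely bound the near-$z_i$ region by extending the integral to all of $\mathbb{R}^6$; the logarithmic truncation at scale $|a|$ must be tracked, but it is harmless since the prefactor $|a|^{-2\alpha}$ beats $|a|^{-4}\log|a|$ precisely because of the hypothesis $\alpha>3$. The second subtlety is bookkeeping the constant $C$: since the statement asserts an equality, I would either interpret it asymptotically (dominant term from region (ii)) or, if an exact identity is truly intended, derive it from \eqref{eq3.18} by recognizing $(1+|y|^2)^{-2}$ as $c_6\,|y|^{-4}\ast(\text{point mass})$-type object is not literally available, so the asymptotic reading is the sensible one. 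Everything else is a routine estimate of elementary integrals of rational functions.
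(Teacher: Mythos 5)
Your approach is essentially the same as the paper's: the paper obtains the upper bound by applying Lemma \ref{B2} globally with $\delta=4$ (so the remaining exponent $2\alpha$ exceeds $6$ and the resulting integral converges), and the lower bound by exactly the three-region decomposition you describe, with the dominant contribution coming from the ball centred at $\lambda z_1$; your reading of the ``equality'' in \eqref{p1} as a two-sided comparability $\asymp(\lambda|z_1-z_i|)^{-4}$ is also the interpretation the paper actually proves. The one concrete error is in your step (iii): in $\mathbb{R}^6$ the truncated integral $\int_{B_R(0)}(1+|z|^2)^{-2}\,dz$ grows like $R^2$, not like $\log R$, since in radial coordinates the integrand behaves like $r^{5}\cdot r^{-4}=r$ at infinity. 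The contribution of $B_{|a|/2}(a)$ is therefore $\lesssim |a|^{-2\alpha}\cdot|a|^{2}=|a|^{2-2\alpha}$ rather than $|a|^{-2\alpha}\log|a|$. This does not break the argument, because $2-2\alpha<-4$ precisely when $\alpha>3$, so the contribution is still $o(|a|^{-4})$; indeed the corrected computation is what explains why the hypothesis is $\alpha>3$ rather than the weaker condition your logarithmic bound would have suggested. A minor secondary point: on the complementary region, Lemma \ref{B2} as stated only allows $\delta\leq\min\{2\alpha,4\}=4$ and hence yields $O(|a|^{-4})$ rather than a power $|a|^{-\delta}$ with $\delta>4$; to see that this region is genuinely negligible one should estimate directly (splitting according to $|y|\leq 2|a|$ and $|y|\geq 2|a|$, which again gives $|a|^{2-2\alpha}$), although for the two-sided bound the $O(|a|^{-4})$ control already suffices for the upper estimate, the lower estimate being supplied by your region (ii). With these corrections the proof is complete.
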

\begin{proof}
First, by Lemma \ref{B2}, we have
$$\aligned
\int_{\mathbb{R}^{6}}&\frac{1}{(1+|y-\lambda z_{1}|^{2})^{\alpha}}
	\frac{1}{(1+|y-\lambda z_{i}|^{2})^{2}}dy\\
&\leq\frac{C}{(\lambda|z_{1}-z_{i}|)^{4}}\int_{\mathbb{R}^{6}}\Big(\frac{1}{(1+|y-\lambda z_{1}|)^{2\alpha}}
	+\frac{1}{(1+|y-\lambda z_{i}|)^{2\alpha}}\Big)dy
=\frac{C}{(\lambda|z_{1}-z_{i}|)^{4}}.
\endaligned$$

Next, we notice that
$$
\int_{\mathbb{R}^{6}}\frac{1}{(1+|y-\lambda z_{1}|^{2})^{\alpha}}
\frac{1}{(1+|y-\lambda z_{i}|^{2})^{2}}dy
=\int_{\mathbb{R}^{6}}\frac{1}{(1+|y|^{2})^{\alpha}}
\frac{1}{(1+|y+\lambda z_{1}-\lambda z_{i}|^{2})^{2}}dy.
$$
Let $d=\frac{\lambda}{2}|z_{1}-z_{i}|>1$. Then, we have

$$
\int_{B_{d}(0)}\frac{1}{(1+|y|^{2})^{\alpha}}
\frac{1}{(1+|y+\lambda z_{1}-\lambda z_{i}|^{2})^{2}}dy
\geq \frac{1}{(1+(3d)^{2})^{2}}\int_{B_{1}(0)}\frac{1}{(1+|y|^{2})^{\alpha}}dy
\geq \frac{C}{d^{4}},
$$
and
$$\aligned
\int_{B_{d}(\lambda z_{i}-\lambda z_{1})}&\frac{1}{(1+|y|^{2})^{\alpha}}
\frac{1}{(1+|y+\lambda x_{1}-\lambda z_{i}|^{2})^{2}}dy\\
&\geq \frac{1}{(1+(3d)^{2})^{\alpha}}\int_{B_{1}(\lambda z_{i}-\lambda z_{1})}\frac{1}{(1+|y+\lambda z_{1}-\lambda z_{i}|^{2})^{2}}dy
\geq \frac{C}{d^{2\alpha}}.
\endaligned$$
Let $z\in\mathbb{R}^{6}\backslash (B_{d}(0)\cup B_{d}(\lambda z_{i}-\lambda z_{1}))$ with $|z|=2d$ and $|z+\lambda z_{1}-\lambda z_{i}|=2d$. Then, we have
$$\aligned
\int_{\mathbb{R}^{6}\backslash (B_{d}(0)\cup B_{d}(\lambda z_{i}-\lambda z_{1}))}&\frac{1}{(1+|y|^{2})^{\alpha}}
\frac{1}{(1+|y+\lambda z_{1}-\lambda z_{i}|^{2})^{2}}dy\\
&\geq\int_{B_{d}(z)}\frac{1}{(1+|y|^{2})^{\alpha}}
\frac{1}{(1+|y+\lambda z_{1}-\lambda z_{i}|^{2})^{2}}dy\\
&\geq \frac{ C}{(1+ (3d)^{2})^{2}}\int_{B_{1}(z)}\frac{1}{(1+|y|^{2})^{\alpha}}
dy
= \frac{ C}{d^{4}}.
\endaligned$$
Thus, we can obtain
$$
\int_{\mathbb{R}^{6}}\frac{1}{(1+|y-\lambda z_{1}|^{2})^{\alpha}}
	\frac{1}{(1+|y-\lambda z_{i}|^{2})^{2}}dy
\geq\frac{C}{(\lambda|z_{1}-z_{i}|)^{4}}.
$$
\end{proof}

To prove Theorem \ref{EXS}, we will use a nondegeneracy result. From \cite{Lei, DY}, we know
	\begin{equation}\label{F}
	U_{0,1}(x):=\sqrt{2}S^{-\frac{1}{2}}C(6,4)^{-\frac{1}{2}}24(\frac{1}{1+|x|^2})^{2},
\end{equation}
where $C(6,4)=\frac{1}{4\pi}\frac{\Gamma(2)}{\Gamma(4)}\lbrace \frac{
	\Gamma(6)}{\Gamma(3)}\rbrace^{\frac{1}{3}}$, is the unique family positive solutions of the critical Hartree equations:
\begin{equation}\label{6}
	-\Delta u=\frac{1}{2}(I_{2}\ast u^{2})u,\ \ \mbox{in} \ \ \R^6,
\end{equation}
where $I_{2}:\R^6\rightarrow \R$ is the Riesz potential defined at each point $x\in \R^N\backslash \lbrace 0\rbrace$ by
$$
I_{2}(x)=\frac{A_{2}}{|x|^{4}},\ \ \mbox{where } A_{2}=\frac{\Gamma(2)}{\Gamma(1)\pi^{\frac{N}{2}}2^{2}}.	
$$
Here the coefficient $\frac{1}{2}$ is taken to simplify the computations.
In \cite{YZ1}, Yang and Zhao proved a nondegeneracy result, i.e.
\begin{lem}\label{nondegeneracy}(see \cite[Theorem 2.1]{YZ1})
	The linearization of equation \eqref{6} around the solution $U_{0,1}$,
	\begin{equation}
		-\Delta \varphi=\frac{1}{2}(I_{2}\ast U_{0,1}^2)\varphi+(I_{2}\ast U_{0,1}\varphi)U_{0,1}
	\end{equation}
	only admits solutions in $D^{1,2}(\R^6)$ of the form
	$$
	\varphi=\overline{a}D_{\lambda}U_{0,1}+\mathbf{b}\cdot\nabla U_{0,1},\ \ \overline{a}\in \R,\ \ \mathbf{b}\in \R^6 .	
	$$
\end{lem}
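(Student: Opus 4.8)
\emph{Overall strategy, and a priori regularity.}
Because $N=6$ and $\mu=4$, the constant $A_2$ is normalised so that $I_2=A_2|x|^{-4}$ is \emph{exactly} the fundamental solution of $-\Delta$ on $\R^6$ (equivalently, $|x|^{-4}$ is the Newtonian kernel in dimension six), so $-\Delta(I_2\ast g)=g$. The plan is to use this twice: first to turn the nonlocal term of the linearised equation into a local one, and then, since the Newtonian kernel is conformally covariant, to reduce the whole problem to a \emph{constant--coefficient} linear problem on the round sphere $\mathbb{S}^6$, which diagonalises against spherical harmonics. Before that I would establish the regularity needed to make the reduction rigorous: any $\varphi\in D^{1,2}(\R^6)$ solving the linearised equation is smooth and obeys $|\varphi(x)|\le C(1+|x|)^{-4}$, i.e. it decays like $U_{0,1}$. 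Indeed, by Lemma \ref{P0} the equation reads $-\Delta\varphi=\tfrac{\beta}{2}U_{0,1}\varphi+(I_2\ast(U_{0,1}\varphi))U_{0,1}$ with $\beta>0$ defined by $I_2\ast U_{0,1}^2=\beta U_{0,1}$; writing it in convolution form and bootstrapping with the Hardy--Littlewood--Sobolev inequality and the pointwise estimates of Lemmas \ref{B3}--\ref{B4} (which are tailored to the kernel $|x|^{-4}$ in $\R^6$) upgrades $\varphi\in L^{3}$ first to $\varphi\in L^{\infty}$ and then to the stated decay. This legitimises the term-by-term spherical-harmonic expansion and the removable-singularity argument used below.

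\emph{An equivalent local system, and the obvious part of the kernel.}
Set $\psi:=I_2\ast(U_{0,1}\varphi)$. Since $I_2$ is the fundamental solution of $-\Delta$, the linearised equation is equivalent to the system
$$ -\Delta\varphi=\tfrac{\beta}{2}U_{0,1}\varphi+U_{0,1}\psi,\qquad -\Delta\psi=U_{0,1}\varphi, $$
supplemented by the condition $\psi\to0$ at infinity. Note that the coupling coefficient $U_{0,1}$ is \emph{strictly positive}; this sign is the only quantitative information about the coefficients needed at the very end. Differentiating $I_2\ast U_{\xi,\lambda}^2=\beta U_{\xi,\lambda}$ (Lemma \ref{P0} for translated and dilated bubbles) with respect to $\lambda$ and to $\xi_i$ yields $I_2\ast(U_{0,1}D_\lambda U_{0,1})=\tfrac{\beta}{2}D_\lambda U_{0,1}$ and $I_2\ast(U_{0,1}\partial_{x_i}U_{0,1})=\tfrac{\beta}{2}\partial_{x_i}U_{0,1}$, so that $(\varphi,\psi)=(D_\lambda U_{0,1},\tfrac{\beta}{2}D_\lambda U_{0,1})$ and $(\varphi,\psi)=(\partial_{x_i}U_{0,1},\tfrac{\beta}{2}\partial_{x_i}U_{0,1})$, $i=1,\dots,6$, solve the system. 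Hence the $7$-dimensional space spanned by $D_\lambda U_{0,1}$ and the six components of $\nabla U_{0,1}$ lies in the kernel, and the task is to show there is nothing else.

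\emph{Transfer to the sphere, and diagonalisation.}
By \eqref{F} we may write $U_{0,1}=\gamma_0\,\rho$, where $\rho(x):=\big(\tfrac{2}{1+|x|^2}\big)^{2}$ is precisely the conformal factor of the inverse stereographic projection $\pi^{-1}:\R^6\to\mathbb{S}^6\setminus\{p\}$ and $\gamma_0>0$. The conformal covariance of the Laplacian in the critical dimension reads: for $v$ on $\mathbb{S}^6$ and $V:=\rho\cdot(v\circ\pi^{-1})$ one has $-\Delta V=\rho^{2}\big((-\Delta_{\mathbb{S}^6}+6)v\big)\circ\pi^{-1}$. Writing $\varphi=\rho\cdot(\Phi\circ\pi^{-1})$ and $\psi=\rho\cdot(\Psi\circ\pi^{-1})$ and multiplying the system through by $\rho^{-2}$, it becomes the constant--coefficient system on $\mathbb{S}^6$
$$ (-\Delta_{\mathbb{S}^6}+6)\Phi=\tfrac{\gamma_0\beta}{2}\Phi+\gamma_0\Psi,\qquad (-\Delta_{\mathbb{S}^6}+6)\Psi=\gamma_0\Phi, $$
with $\Phi,\Psi$ smooth on all of $\mathbb{S}^6$ thanks to the decay established above. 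Eliminating $\Psi$ and expanding $\Phi=\sum_{k\ge0}\Phi_k$ into spherical harmonics of $\mathbb{S}^6$ — on $\mathcal{H}_k(\mathbb{S}^6)$ the operator $-\Delta_{\mathbb{S}^6}$ has eigenvalue $k(k+5)$, so $(-\Delta_{\mathbb{S}^6}+6)$ has eigenvalue $\lambda_k:=(k+2)(k+3)$ — each component satisfies $\big(\lambda_k-\tfrac{\gamma_0\beta}{2}-\gamma_0^2\lambda_k^{-1}\big)\Phi_k=0$, hence $\Phi_k=0$ unless $\lambda_k$ is a root of $\lambda^2-\tfrac{\gamma_0\beta}{2}\lambda-\gamma_0^2=0$. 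This quadratic has constant term $-\gamma_0^2<0$, so it has exactly one positive root; since $\mathcal{H}_1(\mathbb{S}^6)$ already lies in the kernel by the previous step, that root must be $\lambda_1=12$, and $(k+2)(k+3)=12$ forces $k=1$. Therefore $\Phi\in\mathcal{H}_1(\mathbb{S}^6)$, a $7$-dimensional space; transferring back and matching dimensions with the $7$ linearly independent kernel elements of the previous step yields $\varphi\in\mathrm{span}\{D_\lambda U_{0,1},\partial_{x_1}U_{0,1},\dots,\partial_{x_6}U_{0,1}\}$, which is exactly the assertion of the lemma.

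\emph{Where the difficulty lies.}
The crux is the third step: making the conformal intertwining precise \emph{also for the slaved field $\psi$}, and checking that its transform $\Psi$ extends smoothly across the pole $p$ — which is where the $|x|^{-4}$-decay established in the first step is really used — together with the bookkeeping of the constants coming out of \eqref{F} (the only one needed explicitly is $\gamma_0\beta=12$, which follows from the shape $U_{0,1}\propto(1+|x|^2)^{-2}$ and the equation itself; the value $\gamma_0^2=72$ is then forced by the inclusion $\mathcal{H}_1(\mathbb{S}^6)\subset\ker$, consistently with $S\,C(6,4)=1$). A route avoiding conformal geometry is to expand $\varphi=\sum_k\varphi_k(r)Y_k$ directly in $\R^6$-spherical harmonics and to prove by hand, as a generalisation of \eqref{eq3.18}, that convolution with $|x|^{-4}$ acts on the degree-$k$ sector built on the weight $(1+|x|^2)^{-s}$ as an explicit scalar multiplier; the linearised equation then splits into a family of ODEs that become hypergeometric after an Emden--Fowler substitution and can be solved one sector at a time. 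That route is computationally heavier, but the endgame is the same elementary dichotomy: because the coupling in the system is positive, the resulting algebraic condition on the harmonic degree is satisfied only by $k=1$.
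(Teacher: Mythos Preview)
Your argument is correct and takes a genuinely different route from the paper's proof. Both begin by turning the nonlocal problem into the local system $-\Delta\varphi=W\varphi+U_{0,1}\psi$, $-\Delta\psi=U_{0,1}\varphi$ (your $\tfrac{\beta}{2}U_{0,1}$ is exactly the paper's $W$), but the paper then expands $(\varphi,\psi)$ in spherical harmonics on $\mathbb{S}^5\subset\R^6$, obtaining a pair of radial ODEs \eqref{9}--\eqref{10} for each mode, and eliminates the higher modes by a Wronskian/comparison argument against the explicit solutions $(U_{0,1}',W')$, which requires a somewhat delicate case analysis on the sign changes of $\varphi_k$ and $\psi_k$ near the origin. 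Your approach instead pulls the system back to the round sphere $\mathbb{S}^6$ via stereographic projection; because $U_{0,1}$ is a constant multiple of the conformal factor, the coefficients become constant, the problem diagonalises completely against $\mathcal{H}_k(\mathbb{S}^6)$, and the whole question reduces to the single quadratic $\lambda^2-\tfrac{\gamma_0\beta}{2}\lambda-\gamma_0^2=0$ having a unique positive root. This is cleaner and more conceptual; the price you pay is the regularity/decay step needed to justify the removable singularity for $\Phi$ and $\Psi$ at the pole, which the paper's direct ODE approach avoids. Your closing remark about an alternative ``$\R^6$ route'' is in the same spirit as the paper's argument, though the paper does not compute multipliers for the convolution sector by sector: it sidesteps that entirely by using the system formulation and comparison with $(U_{0,1}',W')$.
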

	\begin{proof}
	We sketch the proof for the convenience of the readers.
		Equation \eqref{6} is equivalent to the following system
		\begin{equation}\label{3}
			\left\lbrace
			\begin{aligned}
				-\Delta u&=uw \sp ~in\sp \R ^6,\\
				-\Delta w&=\frac{1}{2}u^{2}\sp in\sp \R ^6.
			\end{aligned}
			\right.
		\end{equation}
We define the Talenti bubbles functions
		\begin{equation}\label{18}
			U_{0,1}(x):=\sqrt{2}S^{-\frac{1}{2}}C(6,4)^{-\frac{1}{2}}24(\frac{1}{1+|x|^2})^{2}~~~\text{and}	~~~W:=\frac{1}{8\omega_{6}}\int_{\R^6}\frac{U_{0,1}^2(y)}{|x-y|^4}dy.
		\end{equation}
		Then $(U_{0,1},W)$ solves system \eqref{3}.
		The linearized equation of system \eqref{3} around the solution $(U_{0,1},W)$ is given by
		\begin{equation}\label{5}
			\left\lbrace
			\begin{aligned}
				-\Delta \varphi&=\varphi W+\psi U_{0,1} \sp in\sp \R ^6,\\
				-\Delta \psi&=\varphi U_{0,1} \sp ~~~~~~~~~in\sp \R ^6.
			\end{aligned}
			\right.
		\end{equation}
		We aim to prove that equation \eqref{5}	only admits solutions of the form
		\begin{equation}\nonumber
			(\varphi, \psi)=\bar{a}(2U_{0,1}+rU_{0,1}^{'}, 2W+rW^{'})+\sum_{j=1}^{6}b{j}\frac{\partial(U_{0,1},W)}{\partial x_j},	
		\end{equation}
		where $\bar{a}, b_{j}\in \R$.

		For general $N$, let $r=|x|$ and $\theta=\frac{x}{|x|}\in {\mathbb{S}^{N-1}}$, we denote $\Delta_{r}$ respectively $\Delta_{\mathbb{S}^{N-1}}$ to be the Laplace operator in radial coordinates and the Laplace-Beltrami operator:
		\begin{equation}\nonumber
			\begin{aligned}
				\Delta_{r}=\frac{\partial^2}{\partial r^2}+\frac{N-1}{r}\frac{\partial}{\partial r},\\
				\Delta_{\mathbb{S}^{N-1}}=\frac{1}{\sqrt{g}}	\sum \frac{\partial}{\partial y_{j}}(\sqrt{g}g^{ij}\frac{\partial}{\partial y_{j}}),
			\end{aligned}
		\end{equation}
		where $g=\det{g^{ij}}$ and $[g^{ij}]=[g_{ij}]^{-1}$. Clearly
		\begin{equation}\label{7}
			\Delta=\Delta_{r}+\frac{1}{r^2}\Delta_{\mathbb{S}^{N-1}}	.\end{equation}
		Let
		\begin{equation}\nonumber
			\begin{aligned}
				\varphi_{k}(x)=\int_{\mathbb{S}^{5}}\varphi(r,\theta)Y_{k}(\theta) d\theta,\\
				\psi_{k}(x)=\int_{\mathbb{S}^{5}}\psi(r,\theta)Y_{k}(\theta) d\theta.
			\end{aligned}	
		\end{equation}
		Then, we can find
		\begin{equation}\label{9}
			\begin{aligned}
				\Delta \varphi_{k}&=\int_{\mathbb{S}^{5}}\Delta_{r}\varphi(r,\theta)Y_{k}(\theta) d\theta\\&=\int_{\mathbb{S}^{5}}\Delta \varphi Y_{k}d\theta-\frac{1}{r^2}\int_{\mathbb{S}^{5}}\Delta_{\mathbb{S}^{N-1}}\varphi Y_{k}d\theta\\&
				=-(\varphi_{k}W+\psi_{k}U_{0,1})+\frac{\lambda_{k}}{r^2}\varphi_{k},
			\end{aligned}
		\end{equation}
		and
		\begin{equation}\label{10}
			\Delta \psi_{k}=-\varphi_{k}U_{0,1}+\frac{\lambda_{k}}{r^2}\psi_{k}	.
		\end{equation}
		For any radial function $f(r)$, it is easy to check that
		\begin{equation}\nonumber
			(\Delta f)^{'}=(f^{''}(r)+\frac{N-1}{r}f^{'}(r))^{'}=f^{'''}(r)+\frac{N-1}{r}f^{''}(r)-\frac{N-1}{r^2}f^{'}(r).	
		\end{equation}
		Since $(U_{0,1},W)$ satisfies system \eqref{3} and is radially symmetric, then we have
		\begin{equation}\nonumber
			-\Delta U_{0,1}=-(U_{0,1}^{''}+\frac{5}{r}U_{0,1}^{'})=U_{0,1}W.
		\end{equation}
		thus
		\begin{equation}\nonumber
			U_{0,1}^{''}+\frac{5}{r}U_{0,1}^{'}+U_{0,1}W=0.
		\end{equation}
		Differentiating both sides with respect to $r$, we can find
		\begin{equation}\nonumber
		U_{0,1}^{'''}+\frac{5}{r}U_{0,1}^{''}-\frac{5}{r^2}U_{0,1}^{'}+U_{0,1}^{'}W+U_{0,1}W^{'}=0,
		\end{equation}
		equivalently
		\begin{equation}\label{11}
			\Delta U_{0,1}^{'}-\frac{5}{r^2}U_{0,1}^{'}+U_{0,1}^{'}W+U_{0,1}W^{'}=0,
		\end{equation}
		and
		\begin{equation}\label{12}
			\Delta W^{'}-\frac{5}{r^2}W^{'}+U_{0,1}U_{0,1}^{'}=0.
		\end{equation}
		
		If $k\geqslant 8$, then we claim that $\varphi_{k}\equiv\psi_{k}\equiv 0$. Suppose that the claim is false. Multiplying \eqref{9} and \eqref{10} by $U^{'}$ and $W^{'}$, respectively,
		and integrating over $B_{r}$ centered at the origin with radius $r$, we obtain
		\begin{equation}\label{13}
		0=\int_{B_{r}}\Delta \varphi_{k}U_{0,1}^{'}+\int_{B_{r}}\varphi_{k}WU_{0,1}^{'}+\int_{B_{r}}\psi_{k}U_{0,1}U_{0,1}^{'}-\int_{B_{r}}\frac{\lambda_{k}}{r^2}\varphi_{k}U_{0,1}^{'}.
		\end{equation}
		We can calculate the first term of the above equation as follows
		\begin{equation}\label{14}
		\begin{aligned}
		\int_{B_{r}}\Delta \varphi_{k}U_{0,1}^{'}&=\int_{B_{r}}div(\nabla \varphi_{k})U_{0,1}^{'}
		\\&
		=\int_{\partial B_{r}}U_{0,1}^{'}\frac{\partial \varphi_{k}}{\partial \nu}-\int_{\partial B_{r}}\varphi_{k}\frac{\partial U_{0,1}^{'}}{\partial \nu}+\int_{B_{r}}\Delta U_{0,1}^{'}\varphi_{k}.
		\end{aligned}	
		\end{equation}
		Substituting \eqref{11} and \eqref{14} into \eqref{13}, we have
		\begin{equation}\label{15}
		\begin{aligned}
		0&=\int_{B_{r}}\Delta \varphi_{k}U_{0,1}^{'}+\int_{B_{r}}\varphi_{k}WU_{0,1}^{'}+\int_{B_{r}}\psi_{k}U_{0,1}U_{0,1}^{'}-\int_{B_{r}}\frac{\lambda_{k}}{r^2}\varphi_{k}U_{0,1}^{'}\\ &
		=\int_{\partial B_{r}}(U_{0,1}^{'}\frac{\partial \varphi_{k}}{\partial \nu}-\varphi_{k}\frac{\partial U_{0,1}^{'}}{\partial \nu})-\int_{B_{r}}U_{0,1}W^{'}\varphi_{k}+\int_{B_{r}}\psi_{k}U_{0,1}U_{0,1}^{'}+\int_{B_{r}}\frac{5-\lambda_{k}}{r^2}\varphi_{k}U_{0,1}^{'}.
		\end{aligned}
		\end{equation}
		With similar arguments applied to \eqref{10}, we can see that
		\begin{equation}\label{16}
		\begin{aligned}
		0&=\int_{B_{r}}\Delta \psi_{k}W^{'}+\int_{B_{r}}\varphi_{k}U_{0,1}W^{'}-\int_{B_{r}}\frac{\lambda_{k}}{r^2}\psi_{k}U_{0,1}^{'}\\ &=\int_{\partial B_{r}}(W^{'}\frac{\partial \psi_{k}}{\partial \nu}-\psi_{k}\frac{\partial W^{'}}{\partial \nu})+\int_{B_{r}}U_{0,1}U_{0,1}^{'}\varphi_{k}-\int_{B_{r}}\psi_{k}U_{0,1}U_{0,1}^{'}+\int_{B_{r}}\frac{5-\lambda_{k}}{r^2}\psi_{k}W^{'}.	
		\end{aligned}
		\end{equation}	
		Adding \eqref{15} and \eqref{16},
		we can deduce
		\begin{equation}\label{17}
			\begin{aligned}
				0&=\underbrace{\int_{\partial B_{r}}(U_{0,1}^{'}\frac{\partial \varphi_{k}}{\partial \nu}-\varphi_{k}\frac{\partial U_{0,1}^{'}}{\partial \nu})}\limits_{:=J_{1}(r)}+\underbrace{\int_{\partial B_{r}}(W^{'}\frac{\partial \psi_{k}}{\partial \nu}-\psi_{k}\frac{\partial W^{'}}{\partial \nu})}\limits_{:=J_{2}(r)}+\underbrace{\int_{B_{r}}\frac{5-\lambda_{k}}{r^2}(\varphi_{k}U_{0,1}^{'}+\varphi_{k}U_{0,1}^{'})}\limits_{:=J_{3}(r)}.
			\end{aligned}	
		\end{equation}		
		By choosing suitable $r$ and we will estimate $J_{1}(r)$, $J_{2}(r)$ and $J_{3}(r)$, respectively.
		Since $\varphi_{k}\not\equiv 0$, without loss of generality, we may assume that there exists some $r_{1}>0$ such that $\varphi_{k}(r)<0$ in $(0,r_{1}), \varphi_{k}(r_{1})=0$. Then we can deduce that $\varphi^{'}_{k}(r_{1})>0$.
		
		Next, we are going to prove that $\psi_{k}(r)<0$ for $r$ small enough. Assuming the contrary, we deduce from \eqref{10} that $\psi_{k}(r)$ can not have a positive local maximum in $(0,r_{1})$, hence $\psi_{k}(r)>\psi_{k}(0)\geqslant 0$ for $r\in (0,r_{1})$.
		Combining \eqref{9} with \eqref{3}, we obtain
		\begin{equation}\nonumber
			\begin{aligned}
				0>\int_{B_{r_{1}}}(\frac{\lambda_{k}}{r^2}\varphi_{k}-\psi_{k}U_{0,1})U_{0,1}&=\int_{B_{r_{1}}}(\Delta \varphi_{k}+\varphi_{k}W)U_{0,1}\\&
=\int_{\partial B_{r_{1}}}U_{0,1}\frac{\partial \varphi_{k}}{\partial \nu}
				>0,
			\end{aligned}
		\end{equation}
		which leads to a contradiction. It is sufficient to show that there exists some $r_{2}>0$ such that $\psi_{k}(r)<0$ in $(0,r_{2})$, $\psi_{k}(r_{2})=0$ and $\psi^{'}_{k}(r_{2})>0$. We divide our argument into three cases:
		
		$\mathbf{(1).}$ If $r_{1}=r_{2}$, we take $r=r_{1}=r_{2}$. It is easy to check that $J_{1}(r)<0,J_{2}(r)<0, J_{3}(r)<0$, that contradicts to \eqref{17}.
		
		$\mathbf{(2).}$ If $r_{1}>r_{2}$, obviously $J_{2}(r_{2})<0, J_{3}(r_{2})<0$. The difficulty here is to evaluate $J_{1}(r_{2})$.
		Define
		\begin{equation}\nonumber
			F(r)=r^{5}U^{'}\varphi^{'}_{k}-r^{5}U^{''}\varphi_{k}.
		\end{equation}
		For any $r\in (r_{2}, r_{1})$, from \eqref{9} and \eqref{11} we have
		\begin{equation}\nonumber
		\begin{aligned}
		F^{'}(r)&=r^{5}(\varphi^{''}_{k}+\frac{5}{r}\varphi^{'}_{k})U_{0,1}^{'}-r^{5}(U_{0,1}^{'''}+\frac{5}{r}U_{0,1}^{''})\varphi_{k}\\&
=(\lambda_{k}-5)r^{3}U^{'}\varphi_{k}+r^{5}U_{0,1}(W^{'}\varphi_{k}-U_{0,1}^{'}\psi_{k}).
		\end{aligned}	
		\end{equation}
		To obtain a contradiction, assume that $\psi_{k}>0$ does not hold for all $r\in (r_{2}, r_{1})$. Then $\psi_{k}$ must have a zero for some $r_{0}\in (r_{2},r_{1})$, such that $\psi_{k}(r)>0, r_{2}<r<r_{0}$ and $\psi^{'}_{k}(r_{0})<0$. For any $r_{2}<r<r_{0}$, $\psi_{k}(r)$ must have a local maximum. From the fact that
		\begin{equation}
		\Delta \psi_{k}=-\varphi_{k}U_{0,1}+\frac{\lambda_{k}}{r^2}\psi_{k}>0,\ \  \sp r_{2}<r<r_{0},
		\end{equation}
		we know $\psi_{k}>0$ for all $r_{2}<r<r_{1}$. Since $F^{'}(r)>0, r_{2}<r<r_{1}$, then $F(r_{2})<F(r_{1})<0$. 
		Then we can deduce that
		\begin{equation}\nonumber
			J_{1}(r_{2})=\int_{\partial B_{r_{2}}}r^{-5}F(r)<0,
		\end{equation}
		which contradicts with \eqref{17}.
		
		$\mathbf{(3).}$ If $r_{2}>r_{1}$. Similar to the argument of $(2)$, we need to evaluate $J_{2}(r_{1})$ .
		Define
		\begin{equation}\nonumber
		G(r)=r^{5}W^{'}\psi^{'}_{k}-r^{5}W^{''}\psi_{k}.
		\end{equation}
		We know that, for any $r_{1}<r<r_{2}$, \begin{equation}\nonumber
			G^{'}(r)=(\lambda_{k}-5)r^{3}W^{'}\psi_{k}-r^{5}U_{0,1}(W^{'}\varphi_{k}-U_{0,1}^{'}\psi_{k}).
			\end{equation}
			In order to prove $G^{'}(r)>0$ for $r_{1}<r<r_{2}$, we proceed to show that $\varphi_{k}>0, r\in (r_{1},r_{2})$. On the contrary, suppose that $\varphi_{k}>0$ does not hold for all $r_{1}<r<r_{2}$. Then $\varphi_{k}$ must have a zero for some $r_{3}\in (r_{1},r_{2})$, such that $\varphi_{k}(r)>0, r_{1}<r<r_{3}$ and $\varphi^{'}_{k}(r_{3})<0$. Therefore
			\begin{equation}\nonumber
			\begin{aligned}
			0<\int_{B_{r_{3}}\backslash B_{r_{1}}}(\frac{\lambda_{k}}{r^2}\varphi_{k}-\psi_{k}U_{0,1})U_{0,1}&=\int_{B_{r_{3}}\backslash B_{r_{1}}}(\Delta \varphi_{k}+\varphi_{k}W)U_{0,1}\\&
=\int_{\partial B_{r_{3}}}U_{0,1}\frac{\partial \varphi_{k}}{\partial \nu}-\int_{\partial B_{r_{1}}}U_{0,1}\frac{\partial \varphi_{k}}{\partial \nu}
			<0,
			\end{aligned}
			\end{equation}
			Then we can deduce that $J_{2}(r_{1})<0$ and \eqref{17} gives a contradiction.
			
			From above arguments, we can find $(\varphi_{k},\psi_{k})\equiv 0$ for $k\geqslant 8$. Therefore, the linearization of system \eqref{3} around the solution $(U_{0,1},W)$ has a kernel with at most 7 dimensions. On the other hand, when $k=1$ we check at once that $(\varphi_{1},\psi_{1})=(2U_{0,1}+rU_{0,1}^{'},2W+rW^{'})$ is a solution,  when $k=2,\cdots,7$, $\frac{\partial(U_{0,1},W)}{\partial x_j} (1\leqslant j\leqslant 6)$ are solutions. 
			Since $(2U_{0,1}+rU_{0,1}^{'},2W+rW^{'})$ and $\frac{\partial(U_{0,1},W)}{\partial x_j} (1\leqslant j\leqslant 6)$ are linearly independent, the family of solutions of \eqref{5} is given by $ (\varphi, \psi)=a(2U_{0,1}+rU_{0,1}^{'}, 2W+rW^{'})+\sum\limits_{j=1}^{6}b{j}\frac{\partial(U_{0,1},W)}{\partial x_j}$ for some $a, b_{j}\in \R$.
			
		\end{proof}

\section{Finite-dimensional reduction}
In this section, we carry out the finite-dimensional reduction argument in a weighted space introduced in \cite{WY1}. In this way, we can obtain a good estimate for the error term. Let
$$
\|u\|_{\ast}=\sup_{x\in\mathbb{R}^6}\Big(\sum_{j=1}^{m}
\frac{1}{(1+\lambda|x-z_{j}|)^{2+\tau}}\Big)^{-1}\lambda^{-2}|u(x)|,
$$
and
$$
\|h\|_{\ast\ast}=\sup_{x\in\mathbb{R}^6}\Big(\sum_{j=1}^{m}
\frac{1}{(1+\lambda|x-z_{j}|)^{4+\tau}}\Big)^{-1}\lambda^{-4}|h(x)|,
$$
where $\tau=\frac{1}{2}$.

Consider
\begin{equation}\label{c1}
	\left\{\begin{array}{l}
		\displaystyle -\Delta \phi+ V(r,x'')\phi
		-\Big(|x|^{-4}\ast |Z_{\overline{r},\overline{x}'',\lambda}|^{2}\Big)\phi-2\Big(|x|^{-4}\ast (Z_{\overline{r},\overline{x}'',\lambda}
		\phi)\Big)Z_{\overline{r},\overline{x}'',\lambda}
		\\
		\displaystyle \hspace{10.14mm}=h+
		\sum_{l=1}^{6}c_{l}\sum_{j=1}^{m}[\Big(|x|^{-4}\ast |Z_{z_j,\lambda}|^{2}\Big)Z_{j,l}+2\Big(|x|^{-4}\ast (Z_{z_j,\lambda}
		Z_{j,l})\Big)Z_{z_j,\lambda}]\hspace{4.14mm}\mbox{in}\hspace{1.14mm} \mathbb{R}^6,\\
		\displaystyle \phi\in H_{s}, \ \ \sum_{j=1}^{m}\int_{\mathbb{R}^{6}}\Big[\Big(|x|^{-4}\ast |Z_{z_j,\lambda}|^{2}\Big)Z_{j,l}\phi +2\Big(|x|^{-4}\ast |Z_{z_j,\lambda}Z_{j,l}|\Big)Z_{z_j,\lambda}\phi\Big] dx=0,~~l=1,2,\cdots,6,
	\end{array}
	\right.
\end{equation}
for some real numbers $c_{l}$.

\begin{lem}\label{C1}
	Suppose that $\phi_{m}$ solves \eqref{c1} for $h = h_m$. If $\|h_{m}\|_{\ast\ast}\to 0$ as $m\to \infty$, then $\|\phi_{m}\|_{\ast}\to 0$.
\end{lem}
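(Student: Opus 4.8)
The plan is to argue by contradiction in the standard Lyapunov--Schmidt fashion. Suppose the claim fails; then there are $m\to\infty$, admissible parameters $(\overline{r}_m,\overline{x}_m'',\lambda_m)$, right--hand sides $h_m$ with $\|h_m\|_{\ast\ast}\to0$, and solutions $\phi_m$ of \eqref{c1} with $\|\phi_m\|_{\ast}\ge c_0>0$; after normalising we may take $\|\phi_m\|_{\ast}=1$. Since $V\ge0$ is bounded, the Green function $G_V$ of $-\Delta+V$ on $\mathbb{R}^6$ obeys $0\le G_V(x,y)\le c_6|x-y|^{-4}$, so rewriting \eqref{c1} as $(-\Delta+V)\phi_m=F_m$, where $F_m$ collects the two linear convolution terms, $h_m$ and the $c_l$--terms, we obtain the pointwise bound $|\phi_m(x)|\le c_6\int_{\mathbb{R}^6}|x-y|^{-4}|F_m(y)|\,dy$, which dispenses with the potential altogether.

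I would then bound the right--hand side with the integral lemmas already proved. Using $U_{z_j,\lambda}(x)\le C\lambda^{2}(1+\lambda|x-z_j|)^{-4}$, the identity $|x|^{-4}\ast U_{z_j,\lambda}^{2}=CU_{z_j,\lambda}$ (Lemma \ref{P0}), Lemma \ref{B4} for convolutions of a bubble against its own polynomial tail, Lemma \ref{B2} to split products of bubbles centred at distinct $z_i,z_j$, and Lemma \ref{B3} (after the rescaling $y=z_j+\lambda^{-1}\zeta$) to integrate the Newtonian kernel, one arrives at
$$
|\phi_m(x)|\le\big(o(1)\|\phi_m\|_{\ast}+C\|h_m\|_{\ast\ast}\big)\,\lambda^{2}\sum_{j=1}^{m}\frac{1}{(1+\lambda|x-z_j|)^{2+\tau}}+R_m(x),
$$
where the $o(1)$ swallows all interactions between distinct bubbles — each of them carries a gain $(\lambda|z_i-z_j|)^{-\delta}\to0$ since $\lambda\sim m^{2}$ while $|z_i-z_j|\gtrsim m^{-1}$ — and the only non--small remainder $R_m$ comes from the diagonal part of the two linear convolution terms, which is concentrated in the balls $B_{\rho/\lambda}(z_j)$; splitting the defining integral of $R_m$ at radius $\rho/\lambda$ gives, in the $\|\cdot\|_{\ast}$ scale, $\|R_m\|_{\ast}\le C\rho^{-\sigma}\|\phi_m\|_{\ast}+C\sum_{j=1}^{m}\lambda^{-2}\|\phi_m\|_{L^{\infty}(B_{2\rho/\lambda}(z_j))}$ for some $\sigma>0$. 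The multipliers $c_l$ are estimated separately: testing \eqref{c1} against $Z_{i,l'}$, the orthogonality conditions in \eqref{c1} together with Lemma \ref{P0} and the interaction estimates turn the left--hand side into a nearly diagonal invertible linear system for $(c_l)$ with right--hand side $O(\|h_m\|_{\ast\ast}+o(1)\|\phi_m\|_{\ast})$, so that in the blow--up below the rescaled $c_l$--terms tend to $0$ locally uniformly.

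It remains to show that $\lambda^{-2}\|\phi_m\|_{L^{\infty}(B_{2\rho/\lambda}(z_j))}\to0$ for each fixed $\rho$; by the symmetry built into $H_s$ it suffices to treat $j=1$. Set $\widetilde{\phi}_m(y):=\lambda^{-2}\phi_m(z_1+\lambda^{-1}y)$, so that $\|\widetilde{\phi}_m\|_{L^{\infty}(\mathbb{R}^6)}\le C$ and in fact $|\widetilde{\phi}_m(y)|\le C(1+|y|)^{-2-\tau}+o(1)$, while $\widetilde{\phi}_m$ satisfies the rescaled form of \eqref{c1}, whose right--hand side breaks up into: the potential contribution $\lambda^{-2}V(z_1+\lambda^{-1}y)\widetilde{\phi}_m\to0$ (as $V$ is bounded and $\lambda\to\infty$); the rescaled $h_m$-- and $c_l$--terms, which $\to0$ in $L^{\infty}_{loc}$ by the bounds above; the interactions with the bubbles $z_j$, $j\neq1$, which vanish since $\lambda|z_1-z_j|\to\infty$; and the diagonal convolution terms, which by Lemma \ref{P0} and the uniform decay of $\widetilde{\phi}_m$ converge along a subsequence (interior elliptic estimates give $\widetilde{\phi}_m\to\widetilde{\phi}$ in $C^{1}_{loc}$, and dominated convergence applies inside the convolutions after splitting a core from a tail) to $\tfrac12(I_{2}\ast U_{0,1}^{2})\widetilde{\phi}+(I_{2}\ast(U_{0,1}\widetilde{\phi}))U_{0,1}$, the right--hand side of the linearisation in Lemma \ref{nondegeneracy}. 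Hence $\widetilde{\phi}\in D^{1,2}(\mathbb{R}^6)$ solves that linearised Hartree equation, so $\widetilde{\phi}=\overline{a}\,D_{\lambda}U_{0,1}+\mathbf{b}\cdot\nabla U_{0,1}$. The reflection symmetry $u(x_1,-x_2,x'')=u(x_1,x_2,x'')$ makes $\widetilde{\phi}$ even in $y_2$, which removes the $\partial_{y_2}U_{0,1}$ direction, and passing the six orthogonality conditions of \eqref{c1} to the limit (after dividing by $m$ the $z_1$--term dominates, thanks to the symmetry) kills the remaining directions $D_{\lambda}U_{0,1},\partial_{y_1}U_{0,1},\partial_{y_3}U_{0,1},\dots,\partial_{y_6}U_{0,1}$; since the relevant bilinear pairing is nondegenerate on this span, $\overline{a}=0$ and $\mathbf{b}=0$, i.e.\ $\widetilde{\phi}\equiv0$. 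Consequently $\lambda^{-2}\|\phi_m\|_{L^{\infty}(B_{2\rho/\lambda}(z_1))}\to0$, and feeding this back into the pointwise estimate yields $1=\|\phi_m\|_{\ast}\le o(1)+C\|h_m\|_{\ast\ast}+C\rho^{-\sigma}$; letting $m\to\infty$ and then $\rho\to\infty$ forces $1\le0$, a contradiction, which proves the lemma.

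I expect the real difficulty to lie in the third step, namely in verifying that the blow--up limit $\widetilde{\phi}$ genuinely solves the \emph{nonlocal} limiting equation. Because $|x|^{-4}\ast(Z_{\overline{r},\overline{x}'',\lambda}\phi_m)$ is not a local quantity, one cannot merely pass to the limit pointwise; one needs the uniform decay estimates for $\phi_m$ away from the concentration points $z_j$ — which is precisely why the weighted norms $\|\cdot\|_{\ast}$, $\|\cdot\|_{\ast\ast}$ are calibrated as they are — together with a careful splitting of the convolution integral into a core piece (handled by $C^{1}_{loc}$ convergence) and a tail piece (handled by the decay, using that $U_{0,1}$ decays fast and $\widetilde{\phi}_m$ is uniformly bounded). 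Once the limiting equation is in place, the nondegeneracy result Lemma \ref{nondegeneracy}, in conjunction with the axial and reflection symmetries, completes the argument.
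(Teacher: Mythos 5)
Your proposal is correct and follows essentially the same route as the paper's proof: contradiction with normalisation $\|\phi_m\|_{\ast}=1$, a pointwise Green-function estimate controlled by Lemmas \ref{B2}--\ref{B4} which gains decay away from the points $z_j$, the estimate of the $c_l$ by testing against $Z_{1,t}$, and a blow-up at a concentration point whose limit solves the linearised Hartree equation and vanishes by Lemma \ref{nondegeneracy} together with the symmetry and orthogonality constraints. The only differences are presentational (you encode the gain via a cutoff radius $\rho$ and local sup norms, the paper via an improved decay exponent $\theta=2-2\tau$), and you are in fact more explicit than the paper about passing to the limit in the nonlocal terms and about which kernel directions are removed by symmetry versus by the orthogonality conditions.
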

\begin{proof}
	We argue by contradiction. Suppose that there exist $m\rightarrow+\infty$, $\overline{r}_m\rightarrow r_{0}$, $\overline{y}_{m}''\rightarrow y_0''$, $\lambda_{m}\in[L_{0}m^{2},L_{1}m^{2}]$ and $\phi_{m}$ solving \eqref{c1} for $h=h_{m}$, $\lambda=\lambda_{m}$, $\overline{r}=\overline{r}_{m}$, $\overline{y}''=\overline{y}_{m}''$,
	with $\|h_{m}\|_{\ast\ast}\rightarrow0$ and $\|\phi_{m}\|_{\ast}\geq c>0$. We may assume that $\|\phi_{m}\|_{\ast}=1$.
	
	By \eqref{c1}, we have
	\begin{equation}\label{c2}
		\aligned
	&	|\phi_m(x)|\\&\leq
C\int_{\mathbb{R}^6}\frac{1}{|y-x|^{4}}\Big(|y|^{-4}\ast (Z_{\overline{r},\overline{y}'',\lambda}
|\phi_m|)\Big)Z_{\overline{r},\overline{x}'',\lambda}(y)dy+	C	\int_{\mathbb{R}^6}\frac{1}{|y-x|^{4}}\Big(|y|^{-4}\ast |Z_{\overline{r},\overline{x}'',\lambda}|^{2}\Big)|\phi_m(y)| dy
		\\
	&+C\sum_{l=1}^{N}|c_{l}|\Bigg[\Big|\sum_{j=1}^{m}\int_{\mathbb{R}^6}\frac{1}{|y-x|^{4}}\Big(|y|^{-4}\ast (Z_{z_j,\lambda}
		Z_{j,l})\Big)Z_{z_j,\lambda}(y)dy\Big|+\Big|\sum_{j=1}^{m}\int_{\mathbb{R}^6}\frac{1}{|y-x|^{4}}\Big(|y|^{-4}\ast |Z_{z_j,\lambda}|^{2}\Big)Z_{j,l}(y)dy\Big|\Bigg]\\
			&\hspace{4mm}+C\int_{\mathbb{R}^6}\frac{1}{|y-x|^{4}}|h_m(y)|dy.
		\endaligned
	\end{equation}

To estimate the first term in the right side of  \eqref{c2}, from the fact that
	$$
	Z_{\overline{r},\overline{y}'',\lambda}\leq C\sum_{j=1}^{m}\frac{\lambda^{2}}{(1+\lambda|x-z_{j}|)^{4}},
	$$
applying Lemma \ref{B4}, we have
	\begin{equation}\label{c02}
		\aligned
		\int_{\mathbb{R}^6}&\frac{1}{|y-x|^{4}}\Big(|y|^{-4}\ast (Z_{\overline{r},\overline{x}'',\lambda}
	|\phi_m|)\Big)Z_{\overline{r},\overline{x}'',\lambda}(y)dy\\
		&\leq C\|\phi_m\|_{\ast}\|Z_{\overline{r},\overline{x}'',\lambda}\|_{\ast}\int_{\mathbb{R}^6}\frac{1}{|y-x|^{4}}
		\sum_{j=1}^{m}\frac{\lambda^{2}}{(1+\lambda|y-z_{j}|)^{4}}\sum_{j=1}^{m}
		\frac{\lambda^{2}}{(1+\lambda|y-z_{j}|)^{2+\tau}} dy\\
		&=C\|\phi_m\|_{\ast}\lambda^{2}\int_{\mathbb{R}^6}\frac{1}{|y-\lambda x|^{4}}
		\sum_{j=1}^{m}\frac{1}{(1+|y-\lambda z_{j}|)^{4}}\sum_{j=1}^{m}
		\frac{1}{(1+|y-\lambda z_{j}|)^{2+\tau}} dy.
		\endaligned
	\end{equation}
Now, we may define
	$$
	\Omega_{j}=\left\{x=(x',x'')\in\mathbb{R}^{2}\times\mathbb{R}^{4}:\Big\langle\frac{x'}{|x'|},
	\frac{z_{j}'}{|z_{j}'|}\Big\rangle\geq\cos\frac{\pi}{m}\right\}, \ j=1,\cdot\cdot\cdot,m.
	$$
Then for $y\in\Omega_{1}$, we have $|y-\lambda z_{j}|\geq|y-\lambda z_{1}|$. Using Lemma \ref{B2}, we obtain
$$\aligned
\sum_{j=2}^{m}\frac{1}{(1+|y-\lambda z_{j}|)^{4}}&\leq
\frac{1}{(1+|y-\lambda z_{1}|)^{2}}\sum_{j=2}^{m}\frac{1}{(1+|y-\lambda z_{j}|)^{2}}\\
&\leq\frac{C}{(1+|y-\lambda z_{1}|)^{4-\tau}}\sum_{j=2}^{m}\frac{1}{(\lambda|z_{1}-z_{j}|)^{\tau}}.
\endaligned$$
From the proof in Lemma B.3 in \cite{WY1}, we know
$$
\sum_{j=2}^{m}\frac{1}{(\lambda|z_{1}-z_{j}|)^{\tau}}\leq C,
$$
consequently, we have
\begin{equation}\label{5141}
\sum_{j=2}^{m}\frac{1}{(1+|y-\lambda z_{j}|)^{4}}\leq\frac{C}{(1+|y-\lambda z_{1}|)^{4-\tau}}.
\end{equation}
Similarly, we have
\begin{equation}\label{5142}
\sum_{j=2}^{m}\frac{1}{(1+|y-\lambda z_{j}|)^{2+\tau}}
\leq\frac{C}{(1+|y-\lambda z_{1}|)^{2}}.
\end{equation}
From \eqref{5141} and \eqref{5142}, for $y\in\Omega_{1}$, using Lemma \ref{B2} again, we know
\begin{equation}\label{c03}
	\aligned
	\sum_{j=1}^{m}\frac{1}{(1+|y-\lambda z_{j}|)^{4}}\sum_{j=1}^{m}
	\frac{1}{(1+|y-\lambda z_{j}|)^{2+\tau}}
	\leq\frac{C}{(1+|y-\lambda z_{1}|)^{6-\tau}}.
	\endaligned
\end{equation}
Therefore, by Lemma \ref{B3}, we have
$$\aligned
\int_{\Omega_{1}}\frac{1}{|y-\lambda x|^{4}}
\sum_{j=1}^{m}\frac{1}{(1+|y-\lambda z_{j}|)^{4}}\sum_{j=1}^{m}
\frac{1}{(1+|y-\lambda z_{j}|)^{2+\tau}} dy
\leq\frac{C}{(1+\lambda|x-z_{1}|)^{4-\tau}}.
\endaligned$$
In the following, we denote $\theta=2-2\tau$. Thus
$$\aligned
\int_{\mathbb{R}^6}\frac{1}{|y-\lambda x|^{4}}
\sum_{j=1}^{m}\frac{1}{(1+|y-\lambda z_{j}|)^{4}}\sum_{j=1}^{m}
\frac{1}{(1+|y-\lambda z_{j}|)^{2+\tau}} dy
\leq
\sum_{j=1}^{m}\frac{C}{(1+\lambda|x-z_{j}|)^{2+\tau+\theta}}.
\endaligned$$
Combining this with \eqref{c02}, we have
\begin{equation}\label{c00001}
	\int_{\mathbb{R}^6}\frac{1}{|y-x|^{4}}\Big(|y|^{-4}\ast (Z_{\overline{r},\overline{x}'',\lambda}
	|\phi_m|)\Big)Z_{\overline{r},\overline{x}'',\lambda}(y)dy
	\leq C \|\phi_m\|_{\ast}\lambda^{2}\sum_{j=1}^{m}
	\frac{1}{(1+\lambda|x-z_{j}|)^{2+\tau+\theta}}.
\end{equation}
Similarly, we can obtain the estimate for the second term in the right side of  \eqref{c2} as
\begin{equation}\label{5143}
\int_{\mathbb{R}^6}\frac{1}{|y-x|^{4}}\Big(|y|^{-4}\ast |Z_{\overline{r},\overline{x}'',\lambda}|^{2}\Big)|\phi_m(y)| dy\leq C \|\phi_m\|_{\ast}\lambda^{2}\sum_{j=1}^{m}
\frac{1}{(1+\lambda|x-z_{j}|)^{2+\tau+\theta}}.
\end{equation}

For the third term in the right side of  \eqref{c2}, if $l=1$, we can get
$$\aligned
||y|^{-4}\ast (Z_{z_j,\lambda}
Z_{j,1})|
\leq &C|y|^{-4}\ast \frac{\lambda^{3}}{(1+\lambda^{2}|y-z_{j}|^{2})^{5}}+C|y|^{-4}\ast \frac{\lambda^{3}}{(1+\lambda^{2}|y-z_{j}|^{2})^{4}}\\
\leq &C|y|^{-4}\ast \frac{\lambda^{3}}{(1+\lambda|y-z_{j}|)^{10}}+C|y|^{-4}\ast \frac{\lambda^{3}}{(1+\lambda|y-z_{j}|)^{8}}.
\endaligned$$
Thus, by Lemmas \ref{B4} and \ref{B3}, we have
$$
\left|\int_{\mathbb{R}^6}\frac{1}{|y-x|^{4}}\Big(|y|^{-4}\ast (Z_{z_j,\lambda}
Z_{j,1})\Big)Z_{z_j,\lambda}(y)dy\right|\leq C \lambda
\frac{1}{(1+\lambda|x-z_{j}|)^{2+\tau}}.
$$
While for $l\neq1$, we can obtain
$$
\left|\int_{\mathbb{R}^6}\frac{1}{|y-x|^{4}}\Big(|y|^{-4}\ast (Z_{z_j,\lambda}
Z_{j,l})\Big)Z_{z_j,\lambda}(y)dy\right|\leq C \lambda^{3}
\frac{1}{(1+\lambda|x-z_{j}|)^{2+\tau}},
$$
and so,
\begin{equation}\label{5144}
\left|\sum_{j=1}^{m}\int_{\mathbb{R}^6}\frac{1}{|y-x|^{4}}\Big(|y|^{-4}\ast (Z_{z_j,\lambda}
Z_{j,l})\Big)Z_{z_j,\lambda}(y)dy\right|\leq C \lambda^{2+n_{l}}\sum_{j=1}^{m}
\frac{1}{(1+\lambda|x-z_{j}|)^{2+\tau}},
\end{equation}
where $n_{1}=-1$, $n_{l}=1,l=2,\cdot\cdot\cdot,6$. Similarly, we have
\begin{equation}\label{5145}
\left|\sum_{j=1}^{m}\int_{\mathbb{R}^6}\frac{1}{|y-x|^{4}}\Big(|y|^{-4}\ast |Z_{z_j,\lambda}|^{2}\Big)Z_{j,l}(y)dy\right|
\leq C \lambda^{2+n_{l}}\sum_{j=1}^{m}
\frac{1}{(1+\lambda|x-z_{j}|)^{2+\tau}}.
\end{equation}
For the last term,  by Lemma \ref{B3}, we know
\begin{equation}\label{5146}
\int_{\mathbb{R}^6}\frac{1}{|y-x|^{4}}|h_m(y)|dy\leq C \|h_m\|_{\ast\ast}\lambda^{2}\sum_{j=1}^{m}
\frac{1}{(1+\lambda|x-z_{j}|)^{2+\tau}}.
\end{equation}

In the following, we are going to estimate $c_l$, $l = 1, 2, \cdot\cdot\cdot,6$. Multiplying \eqref{c1} by $Z_{1,t}$, $(t = 1, 2, \cdot\cdot\cdot,6)$ and integrating, we see that $c_l$ satisfies
\begin{equation}\label{c3}
	\aligned
\sum_{l=1}^{6}\sum_{j=1}^{m}	&\Big\langle \Big(|x|^{-4}\ast |Z_{z_j,\lambda}|^{2}\Big)Z_{j,l}+2\Big(|x|^{-4}\ast (Z_{z_j,\lambda}
	Z_{j,l})\Big)Z_{z_j,\lambda}, Z_{1,t}\Big\rangle c_{l}\\
	=&
	\Big\langle-\Delta \phi_m+ V(r,x'')\phi_m
	-\Big(|x|^{-4}\ast |Z_{\overline{r},\overline{x}'',\lambda}|^{2}\Big)\phi_m
	-2\Big(|x|^{-4}\ast (Z_{\overline{r},\overline{x}'',\lambda}
	\phi_m)\Big)Z_{\overline{r},\overline{x}'',\lambda}, Z_{1,t}\Big\rangle-\langle h_m, Z_{1,t}\rangle.
	\endaligned
\end{equation}

From Lemma 2.1 in \cite{PWY}, we know that
\begin{equation}\label{c4}
	|\langle h_m, Z_{1,t}\rangle|\leq C\lambda^{n_{t}}\|h_m\|_{\ast\ast},
\end{equation}
and
\begin{equation}\label{c5}
	|\langle V(r,x'')\phi_m, Z_{1,t}\rangle|\leq O(\frac{\lambda^{n_{t}}\|\phi_m\|_{\ast}}{\lambda^{1+\varepsilon}}),
\end{equation}
where $\varepsilon>0$ is small constant.
Since
$$\aligned
|x|^{-4}\ast \frac{\xi(x)\lambda^{2}}{(1+\lambda|x-z_{i}|)^{6+\tau}}=
&\int_{\mathbb{R}^{6}}\frac{1}{|y|^{4}}\frac{\xi(x-y)\lambda^{2}}{(1+\lambda|x-z_{i}-y|)^{6+\tau}}dy\\
\leq&\int_{B_{2\delta}(x-(r_0, x_0'' ))}\frac{\lambda^{2}}{(\lambda|x-z_{i}-y|)^{5}}dy
=O(\frac{1}{\lambda^{3}}),
\endaligned$$
and
$$\aligned
&|x|^{-4}\ast \Big(\frac{\xi(x)\lambda^{2}}{(1+\lambda|x-z_{i}|)^{4}}\frac{1}{(1+\lambda|x-z_{j}|)^{2+\tau}}\Big)\\
\leq&\frac{C}{(\lambda|z_{i}-z_{j}|)^{\frac{\tau}{2}}}\int_{\mathbb{R}^{6}}\frac{1}{|y|^{4}}
\Big(\frac{\xi(x-y)\lambda^{2}}{(1+\lambda|x-z_{i}-y|)^{6+\frac{\tau}{2}}}
+\frac{\xi(x-y)\lambda^{2}}{(1+\lambda|x-z_{j}-y|)^{6+\frac{\tau}{2}}}\Big)dy\\
=&O(\frac{1}{\lambda^{3}}),  \ \  j\neq i,
\endaligned$$
thanks to Lemma \ref{B2}, we have
$$\aligned
||x|^{-4}\ast (Z_{\overline{r},\overline{x}'',\lambda} \phi_{m})|\leq &C\|\phi_m\|_{\ast}\int_{\mathbb{R}^{6}}\frac{1}{|y|^{4}}\sum_{j=1}^{m}\frac{\xi(x-y)\lambda^{2}}{(1+\lambda|x-y-z_{j}|)^{4}}\sum_{j=1}^{m}
\frac{\lambda^{2}}{(1+\lambda|x-y-z_{j}|)^{2+\tau}}dy\\
=& O(\frac{m^{2}\|\phi_m\|_{\ast}}{\lambda}).
\endaligned$$
So,
$$\aligned
\int_{\mathbb{R}^6}&\Big(|x|^{-4}\ast (Z_{\overline{r},\overline{x}'',\lambda}
\phi_{m})\Big)Z_{\overline{r},\overline{y}'',\lambda} Z_{1,t}dx\\
&\leq C\|\phi_m\|_{\ast}\|Z_{\overline{r},\overline{x}'',\lambda}\|_{\ast}\frac{m^{2}}{\lambda}
\int_{\mathbb{R}^6}\sum_{j=1}^{m}
\frac{\lambda^{2}}{(1+\lambda|x-z_{j}|)^{2+\tau}}
\frac{\xi\lambda^{2+n_{t}}}{(1+\lambda|x-z_{1}|)^{4}}dx\\
&\leq C\|\phi_m\|_{\ast}\frac{m^{2}\lambda^{n_{t}}}{\lambda}
\frac{m}{\lambda^{2}}\int_{\mathbb{R}^6}
\frac{1}{(1+|x-\lambda z_{1}|)^{6+\tau}}dx\\
&=O(\frac{\lambda^{n_{t}}\|\phi_m\|_{\ast}}{\lambda^{1+\varepsilon}}),
\endaligned$$
for some small constant $\varepsilon>0$. 
Similarly, we also have
$$
\Big\langle\Big(|x|^{-4}\ast |Z_{\overline{r},\overline{x}'',\lambda}|^{2}\Big)\phi_m, Z_{1,t}\Big\rangle= O\Big(\frac{\lambda^{n_{t}}\|\phi_m\|_{\ast}}{\lambda^{1+\varepsilon}}\Big).
$$
Notice that
$$\aligned
\int_{\mathbb{R}^{6}}\Delta Z_{1,t}\phi_mdx
=\int_{\mathbb{R}^{6}}(\xi\Delta (U_{z_{1},\lambda})_{t}+(U_{z_{1},\lambda})_{t}\Delta\xi +2\nabla\xi\nabla (U_{z_{1},\lambda})_{t}) \phi_mdx,
\endaligned$$
where
$$
(U_{z_{1},\lambda})_{1}=\frac{\partial U_{z_{1},\lambda}}{\partial \lambda}, (U_{z_{1},\lambda})_{2}=\frac{\partial U_{z_{1},\lambda}}{\partial \overline{r}},
(U_{z_{1},\lambda})_{k}=\frac{\partial U_{z_{1},\lambda}}{\partial \overline{x}_{k}''},
\ \mbox{for }k=3,\cdot\cdot\cdot,N.
$$
By applying the argument as above, we have
$$\aligned
\int_{\mathbb{R}^{6}}\xi\Delta (U_{z_{1},\lambda})_{t} \phi_mdx
&\leq C\|\phi_m\|_{\ast}\sum_{j=1}^{m}\int_{\mathbb{R}^6}\int_{\mathbb{R}^6}
\frac{\xi\lambda^{4}}{(1+\lambda|x-z_{1}|)^{8}}
\frac{1}{|x-y|^{4}}\frac{\lambda^{2+n_{t}}}{(1+\lambda|y-z_{1}|)^{4}}
\frac{\lambda^{2}}{(1+\lambda|y-z_{j}|)^{2+\tau}}dxdy\\
&\leq C\frac{\lambda^{n_{t}}}{\lambda}\|\phi_m\|_{\ast}m\int_{\mathbb{R}^6}
\frac{\lambda^{2}}{(1+\lambda|y-z_{1}|)^{4}}\frac{\lambda^{2}}{(1+\lambda|y-z_{1}|)^{2+\tau}}dy\\
&=O(\frac{\lambda^{n_{t}}\|\phi_m\|_{\ast}}{\lambda^{\frac{5}{2}}}).
\endaligned$$
On the other hand, direct calculation gives
$$
\int_{\mathbb{R}^{6}}(U_{z_{1},\lambda})_{t}\Delta\xi \phi_mdx
\leq C\|\phi_m\|_{\ast}
\sum_{j=1}^{m}\int_{\mathbb{R}^6}
\frac{\lambda^{2}}{(1+\lambda|x-z_{j}|)^{2+\tau}}
\frac{|\Delta\xi|\lambda^{2+n_{t}}}{(1+\lambda|x-z_{1}|)^{4}}dx
= O(\frac{\lambda^{n_{t}}\|\phi_m\|_{\ast}}{\lambda^{2}}),
$$
and
$$
\int_{\mathbb{R}^{6}}\nabla\xi\nabla (U_{z_{1},\lambda})_{t} \phi_mdx
\leq C\|\phi_m\|_{\ast}
\sum_{j=1}^{m}\int_{\mathbb{R}^6}
\frac{|\nabla\xi|\lambda^{2}}{(1+\lambda|x-z_{j}|)^{2+\tau}}
\frac{\lambda^{3+n_{t}}}{(1+\lambda|x-z_{1}|)^{5}}dx
= O(\frac{\lambda^{n_{t}}\|\phi_m\|_{\ast}}{\lambda^{2}}).
$$
So,
$$
\langle-\Delta \phi_m, Z_{1,t}\rangle= O\Big(\frac{\lambda^{n_{t}}\|\phi_m\|_{\ast}}{\lambda^{2}}\Big),
$$
Consequently,
\begin{equation}\label{c6}
	\Big\langle-\Delta \phi_m
	-\Big(|x|^{-4}\ast |Z_{\overline{r},\overline{x}'',\lambda}|^{2}\Big)\phi_m
	-2\Big(|x|^{-4}\ast (Z_{\overline{r},\overline{x}'',\lambda}
	\phi)\Big)Z_{\overline{r},\overline{y}'',\lambda}, Z_{1,t}\Big\rangle= O(\frac{\lambda^{n_{t}}\|\phi_m\|_{\ast}}{\lambda^{1+\varepsilon}}).
\end{equation}

Combining \eqref{c4}-\eqref{c6}, we have
\begin{equation}\label{c7}
	\aligned
	\Big\langle-\Delta \phi_m&+ V(r,y'')\phi_m
	-\Big(|x|^{-4}\ast |Z_{\overline{r},\overline{x}'',\lambda}|^{2}\Big)\phi_m-2\Big(|x|^{-4}\ast (Z_{\overline{r},\overline{x}'',\lambda}
	\phi)\Big)Z_{\overline{r},\overline{x}'',\lambda}, Z_{1,t}\Big\rangle-\langle h_m, Z_{1,t}\rangle\\
	=& O\Big(\frac{\lambda^{n_{t}}\|\phi_m\|_{\ast}}{\lambda^{1+\varepsilon}}+\lambda^{n_{t}}\|h_m\|_{\ast\ast}\Big).
	\endaligned
\end{equation}
On the other hand, it is easy to check that
\begin{equation}\label{c8}
	\sum_{j=1}^{m}\Big\langle \Big(|x|^{-4}\ast |Z_{z_j,\lambda}|^{2}\Big)Z_{j,l}, Z_{1,t}\Big\rangle=(\overline{c}+o(1))\delta_{tl}\lambda^{n_{l}}\lambda^{n_{t}},
\end{equation}
and
\begin{equation}\label{c81}
	\sum_{j=1}^{m}\Big\langle \Big(|x|^{-4}\ast (Z_{z_j,\lambda}
	Z_{j,l})\Big)Z_{z_j,\lambda}, Z_{1,t}\Big\rangle=(\overline{c}'+o(1))\lambda^{n_{l}}\lambda^{n_{t}},
\end{equation}
for some constant $\overline{c} > 0$ and $\overline{c}' > 0$.
And so, from\eqref{c7}, \eqref{c8} and \eqref{c81} into \eqref{c3}, we know
\begin{equation}\label{c9}
	c_{l}=\frac{1}{\lambda^{n_{l}}}(o(\|\phi_m\|_{\ast})+O(\|h_m\|_{\ast\ast})).
\end{equation}
Thus,
\begin{equation}\label{c10}
	\|\phi_m\|_{\ast}\leq o(1)+\|h_m\|_{\ast\ast}+\frac{\sum_{j=1}^{6}\frac{1}{(1+\lambda|x-z_{j}|)^{2+\tau+\theta}}}
	{\sum_{j=1}^{6}\frac{1}{(1+\lambda|x-z_{j}|)^{2+\tau}}}.
\end{equation}
Since $\|\phi_m\|_{\ast}= 1$, we obtain from \eqref{c10} that there is $R > 0$ such that
\begin{equation}\label{c11}
	\|\lambda^{-2}\phi_m\|_{L^{\infty}(B_{\frac{R}{\lambda}}(z_{j}))}\geq a>0,
\end{equation}
for some $j$. However, let $\overline{\phi}_m (x)=\lambda^{-2}\phi_m(\lambda(x-z_{j}))$, then
$$
\int_{\R^N}(|\nabla \overline{\phi}_{m}|^{2}+ V(r,x'')|\overline{\phi}_{m}|^{2})dx\leq C.
$$
Thus there is a $v\in D^{1,2}(\mathbb{R}^{6})$, such that
$$
\overline{\phi}_m\rightharpoonup v,\ \ \mbox{weakly in }D^{1,2}(\mathbb{R}^{6})
$$
and
$$
\overline{\phi}_m\rightarrow v,\ \ \mbox{strongly in }L_{loc}^{2}(\mathbb{R}^{6}),
$$
 as $m\rightarrow+\infty$ . Therefore it follows that $v\in D^{1,2}(\mathbb{R}^{6})$ satisfies
\begin{equation}\label{ib5}
	-\Delta v
	=\Big(\int_{\mathbb{R}^{6}}\frac{|U_{0,\Lambda}(y)|^{2}}{|x-y|^{4}}dy\Big)v
	+2\Big(\int_{\mathbb{R}^{6}}\frac{U_{0,\Lambda}(y)v(y)}{|x-y|^{4}}dy\Big)U_{0,\Lambda}\hspace{4.14mm}\mbox{in}\hspace{1.14mm} \mathbb{R}^6,
\end{equation}
for some $\Lambda\in[\Lambda_{1}, \Lambda_{1}]$. Since $v$ is perpendicular to the kernel of \eqref{ib5}, by the non-degeneracy of $U_{0,1}$ in Theorem \ref{nondegeneracy},  we can conclude that $v = 0$, this contradicts with \eqref{c11}.
\end{proof}

Let
$$
E=\Big\{\phi\in H_{s}:\sum_{j=1}^{m}\int_{\mathbb{R}^{6}}\Big[\Big(|x|^{-4}\ast |Z_{z_j,\lambda}|^{2}\Big)Z_{j,l}\phi +2\Big(|x|^{-4}\ast |Z_{z_j,\lambda}Z_{j,l}|\Big)Z_{z_j,\lambda}\phi\Big] dx=0,~~l=1,2,\cdots,6\Big\}.
$$
endowed with the usual inner product $[\phi,\psi]=\int_{\mathbb{R}^{6}}\nabla\phi\nabla\psi dx$. Problem \eqref{c1} is equivalent to that of finding a $\phi\in E$ such that
$$
[\phi,\psi]=\langle -V(r,x'')\phi
+\Big(|x|^{-4}\ast |Z_{\overline{r},\overline{x}'',\lambda}|^{2}\Big)\phi+2\Big(|x|^{-4}\ast (Z_{\overline{r},\overline{x}'',\lambda}
\phi)\Big)Z_{\overline{r},\overline{x}'',\lambda}+h,\psi\rangle, \ \ \forall\psi\in E.
$$
Similar to the proof of Proposition 4.1 in \cite{DFM},  Riesz's representation theorem and Fredholm's alternative Theorem guarantee the existence of unique solution for any
$h$ provided the following equation
\begin{equation}\label{0}
	\left\{\begin{array}{l}
		\displaystyle -\Delta \phi+ V(r,x'')\phi
		-\Big(|x|^{-4}\ast |Z_{\overline{r},\overline{x}'',\lambda}|^{2}\Big)\phi-2\Big(|x|^{-4}\ast (Z_{\overline{r},\overline{x}'',\lambda}
		\phi)\Big)Z_{\overline{r},\overline{x}'',\lambda}
		\\
		\displaystyle \hspace{10.14mm}=
		\sum_{l=1}^{6}c_{l}\sum_{j=1}^{m}\Big[\Big(|x|^{-4}\ast |Z_{z_j,\lambda}|^{2}\Big)Z_{j,l}+2\Big(|x|^{-4}\ast (Z_{z_j,\lambda}
		Z_{j,l})\Big)Z_{z_j,\lambda}\Big]\hspace{4.14mm}\mbox{in}\hspace{1.14mm} \mathbb{R}^6,\\
		\displaystyle \phi\in H_{s}, \ \ \sum_{j=1}^{m}\int_{\mathbb{R}^{6}}\Big[\Big(|x|^{-4}\ast |Z_{z_j,\lambda}|^{2}\Big)Z_{j,l}\phi +2\Big(|x|^{-4}\ast |Z_{z_j,\lambda}Z_{j,l}|\Big)Z_{z_j,\lambda}\phi\Big] dx=0,~~l=1,2,\cdots,6,
	\end{array}
	\right.
\end{equation}
for certain constants $c_{l}$, has only trivial solution in $E$. This is true due to  Lemma \ref{C1}.
We can also conclude then
that for each $h$, problem \eqref{c1} admits a unique solution $\phi$ with
$$\|\phi\|_{\ast}\leq C \|h\|_{\ast\ast}. $$
In conclusion, we have the following Lemma:
\begin{lem}\label{C2}
There exists $m_0 > 0$ and a constant $C > 0$, independent of $m$, such that for all
$m \geq m_0$ and all $h\in L^{\infty}(\R^6)$, problem \eqref{c1} has a unique solution $\phi \equiv L_m(h)$. Besides,
\begin{equation}\label{c13}
	\|L_m(h)\|_{\ast}\leq C\|h\|_{\ast\ast},\quad |c_l|\leq \frac{C}{\lambda^{n_{l}}}\|h\|_{\ast\ast}.
\end{equation}
\end{lem}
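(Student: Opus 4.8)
The plan is to feed the a priori estimate of Lemma~\ref{C1} into the linear framework sketched just above the statement, via Fredholm's alternative, and then to upgrade the conclusion to the quantitative bounds \eqref{c13} by a further contradiction argument. First I would record that, for $\phi\in E$, problem \eqref{c1} is equivalent to the operator equation $\phi=\mathcal{T}_m\phi+\tilde h$ in $E$, where $\tilde h\in E$ represents the functional $\psi\mapsto\langle h,\psi\rangle$ through the inner product $[\cdot,\cdot]$ (this is where $h\in L^{\infty}(\mathbb{R}^6)$, hence $\|h\|_{\ast\ast}<\infty$, is used), and $\mathcal{T}_m:E\to E$ is the bounded linear operator defined via Riesz representation by
\[
[\mathcal{T}_m\phi,\psi]=\big\langle -V(r,x'')\phi+\big(|x|^{-4}\ast|Z_{\overline r,\overline x'',\lambda}|^{2}\big)\phi+2\big(|x|^{-4}\ast(Z_{\overline r,\overline x'',\lambda}\phi)\big)Z_{\overline r,\overline x'',\lambda},\,\psi\big\rangle .
\]
Using the Hardy--Littlewood--Sobolev inequality (Proposition~\ref{pro1.1}), the boundedness of $V$, the decay of $Z_{\overline r,\overline x'',\lambda}$ and the compactness of $H^1\hookrightarrow L^{2}_{loc}$, one checks, exactly as in the proof of Proposition~4.1 of \cite{DFM}, that $\mathcal{T}_m$ is compact; hence Fredholm's alternative applies, and \eqref{c1} is uniquely solvable for every $h$ if and only if the homogeneous problem \eqref{0}, i.e. \eqref{c1} with $h\equiv0$, has only the trivial solution in $E$.

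Next I would rule out nontrivial solutions of \eqref{0} for large $m$. If this failed, there would exist $m\to\infty$ and $\phi_m\in E\setminus\{0\}$ solving \eqref{0}; normalising $\|\phi_m\|_{\ast}=1$ and applying Lemma~\ref{C1} with $h_m\equiv0$ (so that $\|h_m\|_{\ast\ast}=0$) would force $\|\phi_m\|_{\ast}\to0$, a contradiction. Therefore there is a positive integer $m_0$ such that for all $m\ge m_0$ the homogeneous problem is trivial, and the Fredholm alternative then produces, for each $h\in L^{\infty}(\mathbb{R}^6)$, a unique solution $\phi\equiv L_m(h)$ of \eqref{c1}.

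For the bound $\|L_m(h)\|_{\ast}\le C\|h\|_{\ast\ast}$ with $C$ independent of $m$, I would argue once more by contradiction: if it failed there would be $m_k\to\infty$ and $h_k\in L^{\infty}(\mathbb{R}^6)$ with $\|L_{m_k}(h_k)\|_{\ast}>k\|h_k\|_{\ast\ast}$. Setting $\phi_k=L_{m_k}(h_k)/\|L_{m_k}(h_k)\|_{\ast}$ and $\hat h_k=h_k/\|L_{m_k}(h_k)\|_{\ast}$, we have $\|\phi_k\|_{\ast}=1$, $\|\hat h_k\|_{\ast\ast}<1/k\to0$, and $\phi_k$ solves \eqref{c1} with right-hand side $\hat h_k$; Lemma~\ref{C1} then gives $\|\phi_k\|_{\ast}\to0$, contradicting $\|\phi_k\|_{\ast}=1$. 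This establishes the first inequality in \eqref{c13}. Finally, the linear system \eqref{c3} for the multipliers $(c_l)$, together with the estimates \eqref{c4}--\eqref{c81} derived inside the proof of Lemma~\ref{C1} and the invertibility of its coefficient matrix for $m\ge m_0$, yields $c_l=\lambda^{-n_l}\big(o(\|\phi\|_{\ast})+O(\|h\|_{\ast\ast})\big)$ as in \eqref{c9}; substituting $\|\phi\|_{\ast}=\|L_m(h)\|_{\ast}\le C\|h\|_{\ast\ast}$ gives the second inequality in \eqref{c13}.

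The genuinely delicate point is not this closing argument, which is routine once Lemma~\ref{C1} is in hand, but the verification that $\mathcal{T}_m$ is compact on $E$: the nonlocal convolution terms must be shown to define continuous, and in fact compact, operators in the weighted topologies underlying $\|\cdot\|_{\ast}$ and $\|\cdot\|_{\ast\ast}$, which relies on the interaction estimates of Lemmas~\ref{B2}--\ref{P1} and on the asymptotics of Lemma~\ref{P0}. The reference \cite{DFM} supplies the template in the local Schr\"odinger case, but the singular Riesz kernel $|x|^{-4}$ in \eqref{CFL} forces one to re-derive these mapping properties in the present nonlocal setting before the Fredholm machinery can be invoked.
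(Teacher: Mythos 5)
Your proposal follows essentially the same route as the paper: both recast \eqref{c1} as an operator equation in $E$ via Riesz representation, invoke Fredholm's alternative (modelled on Proposition 4.1 of \cite{DFM}) to reduce unique solvability to the triviality of the homogeneous problem \eqref{0}, and deduce both that triviality and the uniform bounds \eqref{c13} from the a priori estimate of Lemma \ref{C1}, with the multiplier bound coming from \eqref{c9}. The only difference is that you spell out the contradiction arguments and flag the compactness of the nonlocal operator, which the paper leaves implicit by citing \cite{DFM}.
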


Next we consider the following equation
\begin{equation}\label{c14}
\left\{\begin{array}{l}
	\displaystyle -\Delta (Z_{\overline{r},\overline{x}'',\lambda}+\phi)+ V(|x'|,x'')(Z_{\overline{r},\overline{x}'',\lambda}+\phi)
	-\Big(|x|^{-4}\ast |(Z_{\overline{r},\overline{x}'',\lambda}+\phi)|^{2}\Big)(Z_{\overline{r},\overline{x}'',\lambda}+\phi)\\
	\displaystyle \hspace{10.14mm}=\sum_{l=1}^{6}c_{l}\sum_{j=1}^{m}\Big[\Big(|x|^{-4}\ast |Z_{z_j,\lambda}|^{2}\Big)Z_{j,l}+2\Big(|x|^{-4}\ast (Z_{z_j,\lambda}
	Z_{j,l})\Big)Z_{z_j,\lambda}\Big]\hspace{4.14mm}\mbox{in}\hspace{1.14mm} \mathbb{R}^6,\\
	\displaystyle \phi\in H_{s}, \ \ \sum_{j=1}^{m}\int_{\mathbb{R}^{6}}\Big[\Big(|x|^{-4}\ast |Z_{z_j,\lambda}|^{2}\Big)Z_{j,l}\phi +2\Big(|x|^{-4}\ast |Z_{z_j,\lambda}Z_{j,l}|\Big)Z_{z_j,\lambda}\phi\Big] dx=0,~l=1,2,\dots,6.
\end{array}
\right.
\end{equation}

We can rewrite \eqref{c14} as
\begin{equation}\label{c16}
\aligned
-\Delta \phi+& V(|x'|,x'')\phi
-\Big(|x|^{-4}\ast |Z_{\overline{r},\overline{x}'',\lambda}|^{2}\Big)\phi-2\Big(|x|^{-4}\ast Z_{\overline{r},\overline{x}'',\lambda}\phi\Big)Z_{\overline{r},\overline{x}'',\lambda}\\
=&N(\phi)+l_{m}+\sum_{l=1}^{6}c_{l}\sum_{j=1}^{m}\Big[\Big(|x|^{-4}\ast |Z_{z_j,\lambda}|^{2}\Big)Z_{j,l}+2\Big(|x|^{-4}\ast (Z_{z_j,\lambda}
Z_{j,l})\Big)Z_{z_j,\lambda}\Big]\hspace{4.14mm}\mbox{in}\hspace{1.14mm} \mathbb{R}^6,
\endaligned
\end{equation}
where
$$\aligned
N(\phi)=&2\Big(|x|^{-4}\ast (Z_{\overline{r},\overline{x}'',\lambda}\phi)\Big)\phi+\Big(|x|^{-4}\ast |\phi|^{2}\Big)Z_{\overline{r},\overline{x}'',\lambda}+\Big(|x|^{-4}\ast |\phi|^{2}\Big)\phi
\endaligned$$
and
$$
l_{m}=\Big(|x|^{-4}\ast |Z_{\overline{r},\overline{x}'',\lambda}|^{2}\Big)Z_{\overline{r},\overline{x}'',\lambda}
-\sum_{j=1}^{m}\Big(|x|^{-4}\ast |Z_{z_j,\lambda}|^{2}\Big)Z_{z_j,\lambda}
-V(|x'|,x'')Z_{\overline{r},\overline{x}'',\lambda}
+Z_{\overline{r},\overline{x}'',\lambda}^{\ast}\Delta\xi+2\nabla\xi\nabla Z_{\overline{r},\overline{x}'',\lambda}^{\ast}.
$$

In order to use the contraction mapping theorem to prove that \eqref{c16} is uniquely solvable in
the set that $\|\phi\|_{\ast}$ is small, we need to estimate $N(\phi)$ and $l_m$.

\begin{lem}\label{C4}
There is a constant $C> 0$, such that
\begin{equation}\label{c17}
	\|N(\phi)\|_{\ast\ast}\leq C\|\phi\|_{\ast}^{2}.
\end{equation}
\end{lem}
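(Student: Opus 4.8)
The nonlinearity $N(\phi)$ is a sum of three convolution terms, each at least quadratic in $\phi$, so the strategy is to bound each term separately in the $\|\cdot\|_{\ast\ast}$ norm using the pointwise bound $|\phi(x)| \leq \|\phi\|_\ast \lambda^2 \sum_{j=1}^m (1+\lambda|x-z_j|)^{-2-\tau}$ and the analogous bound $Z_{\overline r,\overline x'',\lambda}(x) \leq C\sum_{j=1}^m \lambda^2(1+\lambda|x-z_j|)^{-4}$. First I would treat the term $\big(|x|^{-4}\ast|\phi|^2\big)\phi$: after substituting the pointwise bounds and scaling $x\mapsto \lambda x$, one is left with a convolution $|y|^{-4}\ast \big(\sum_j (1+|y-\lambda z_j|)^{-4-2\tau}\big)^2$ times $\sum_j(1+|y-\lambda z_j|)^{-2-\tau}$. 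Reducing the double sums to a single dominant summand on each region $\Omega_1$ as in the proof of Lemma \ref{C1} (using Lemma \ref{B2} to redistribute the decay and the estimate $\sum_{j\geq 2}(\lambda|z_1-z_j|)^{-\tau}\leq C$ from Lemma B.3 of \cite{WY1}), and then applying Lemma \ref{B4} and Lemma \ref{B3} to absorb the Riesz potential, yields a bound of the form $C\|\phi\|_\ast^2 \lambda^4 \sum_j (1+\lambda|x-z_j|)^{-4-\tau}$, which is exactly what $\|N(\phi)\|_{\ast\ast} \leq C\|\phi\|_\ast^2$ requires (note the extra $\lambda^2$-gain that the cubic term enjoys over the quadratic ones is not needed here).

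**Key steps in order.** (i) Record the pointwise upper bounds for $\phi$ and for $Z_{\overline r,\overline x'',\lambda}$ and $Z_{\overline r,\overline x'',\lambda}^\ast$. (ii) For the mixed term $\big(|x|^{-4}\ast(Z_{\overline r,\overline x'',\lambda}\phi)\big)\phi$, estimate the inner convolution $|x|^{-4}\ast(Z_{\overline r,\overline x'',\lambda}\phi)$ first: bound it pointwise by $C\|\phi\|_\ast \lambda^2 \sum_j(1+\lambda|x-z_j|)^{-2-\tau'}$ for a suitable $\tau' > \tau$, using Lemma \ref{B4} after collapsing the double sum via Lemma \ref{B2}; then multiply by the pointwise bound for $\phi$ and read off the $\|\cdot\|_{\ast\ast}$ bound. (iii) For $\big(|x|^{-4}\ast|\phi|^2\big)Z_{\overline r,\overline x'',\lambda}$, estimate $|x|^{-4}\ast|\phi|^2$ pointwise by $C\|\phi\|_\ast^2 \lambda^2 \sum_j(1+\lambda|x-z_j|)^{-4}$ via Lemma \ref{B4}, then multiply by the bound for $Z_{\overline r,\overline x'',\lambda}$. (iv) For the cubic term, proceed as sketched above. (v) In each case, after collapsing all double sums to single ones on $\Omega_1$ (and arguing symmetrically on each $\Omega_i$), verify the decay exponent lands at $4+\tau$ with $\theta = 2-2\tau > 0$ to spare, and combine.

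**Main obstacle.** The routine but delicate point is the bookkeeping of exponents when collapsing $\big(\sum_j (1+\lambda|x-z_j|)^{-a}\big)\big(\sum_j(1+\lambda|x-z_j|)^{-b}\big)$ to a single power $\sum_j(1+\lambda|x-z_j|)^{-(a+b-\tau)}$ and then passing it through the Riesz potential $|x|^{-4}\ast$: one must keep the decay exponent strictly above $4$ (so that Lemma \ref{B4} and Lemma \ref{B3} apply with room to spare and the outputs stay summable), while also tracking the powers of $\lambda$ so that the final estimate has precisely $\lambda^4$, matching the definition of $\|\cdot\|_{\ast\ast}$, and a surplus exponent $\theta>0$ so that $\|N(\phi)\|_{\ast\ast}$ is controlled by $\|\phi\|_\ast^2$ with no loss. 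The cubic term is where the decay is tightest, since $|\phi|^2$ already carries exponent $4+2\tau$ before the extra factor of $\phi$; checking that $2(4+2\tau)$ followed by one more factor of exponent $2+\tau$ and a Riesz potential still produces an admissible bound is the step I expect to demand the most care, though it is ultimately mechanical given Lemmas \ref{B2}--\ref{B4}.
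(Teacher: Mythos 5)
Your proposal follows essentially the same route as the paper: pointwise bounds for $\phi$ and $Z_{\overline r,\overline x'',\lambda}$, collapsing double sums via Lemma \ref{B2}, and absorbing the Riesz potential via Lemma \ref{B4}-type estimates, term by term. One exponent in step (iii) is overstated: $|\phi|^2$ collapses to a source of decay $4+2\tau=5<6$, so Lemma \ref{B4} as stated does not apply and cannot yield $|x|^{-4}\ast|\phi|^2\le C\|\phi\|_\ast^2\lambda^2\sum_j(1+\lambda|x-z_j|)^{-4}$; the paper instead uses the correct weaker variant $|x|^{-4}\ast\frac{\lambda^2}{(1+\lambda|x-z_j|)^{4+2\tau}}\le C(1+\lambda|x-z_j|)^{-2}$ (proved like Lemma \ref{B4}), which, after multiplying by $Z_{\overline r,\overline x'',\lambda}\le C\lambda^2\sum_j(1+\lambda|x-z_j|)^{-4}$ and collapsing once more, still lands above the required decay $4+\tau$, so the conclusion is unaffected once this is corrected.
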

\begin{proof}
Notice that
$$\aligned
\Big(|x|^{-4}\ast (Z_{\overline{r},\overline{x}'',\lambda}\phi)\Big)|\phi|
\leq &C \|Z_{\overline{r},\overline{x}'',\lambda}\|_{\ast}\|\phi\|_{\ast}^{2}
\lambda^{6}\bigg(|x|^{-4}\ast \Big(\sum_{j=1}^{m}
\frac{1}{(1+\lambda|x-z_{j}|)^{2+\tau}}\Big)^{2}\bigg)\sum_{j=1}^{m}
\frac{1}{(1+\lambda|x-z_{j}|)^{2+\tau}}\\
\leq &C \|\phi\|_{\ast}^{2}
\lambda^{4}\sum_{j=1}^{m}
\frac{1}{(1+\lambda|x-z_{j}|)^{2}}\sum_{j=1}^{m}
\frac{1}{(1+\lambda|x-z_{j}|)^{2+\tau}}\\
\leq &C \|\phi\|_{\ast}^{2}
\lambda^{4}\sum_{j=1}^{m}
\frac{1}{(1+\lambda|x-z_{j}|)^{4+\tau}},
\endaligned$$
where we applied the fact that, for any $1\leq j\leq m$, there is a constant $C>0$, such that
$$
|x|^{-4}\ast
\frac{\lambda^{2}}{(1+\lambda|x-z_{j}|)^{4+2\tau}}\leq C\frac{1}{(1+\lambda|x-z_{j}|)^{2}},
$$
whose proof is similar to Lemma \ref{B4}.
Similarly, we also have
$$
\Big(|x|^{-4}\ast |\phi|^{2}\Big)|Z_{\overline{r},\overline{x}'',\lambda}|
\leq C \|\phi\|_{\ast}^{2}
\lambda^{4}\sum_{j=1}^{m}
\frac{1}{(1+\lambda|x-z_{j}|)^{4+\tau}}
$$
and
$$
\Big(|x|^{-4}\ast |\phi|^{2}\Big)|\phi|
\leq C \|\phi\|_{\ast}^{3}
\lambda^{4}\sum_{j=1}^{m}
\frac{1}{(1+\lambda|x-z_{j}|)^{4+\tau}}.
$$
From the above, we have
$$
\|N(\phi)\|_{\ast\ast}\leq C\|\phi\|_{\ast}^{2}.
$$
\end{proof}

\begin{lem}\label{C5}
There is a constant $\varepsilon> 0$, such that
\begin{equation}\label{c18}
	\|l_{m}\|_{\ast\ast}\leq C(\frac{1}{\lambda})^{1+\varepsilon}.
\end{equation}
\end{lem}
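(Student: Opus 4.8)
## Proof Plan for Lemma \ref{C5}

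The plan is to estimate $\|l_m\|_{\ast\ast}$ term by term, where recall
$$
l_{m}=\underbrace{\Big(|x|^{-4}\ast |Z_{\overline{r},\overline{x}'',\lambda}|^{2}\Big)Z_{\overline{r},\overline{x}'',\lambda}
-\sum_{j=1}^{m}\Big(|x|^{-4}\ast |Z_{z_j,\lambda}|^{2}\Big)Z_{z_j,\lambda}}_{=:I_1}
-\underbrace{V(|x'|,x'')Z_{\overline{r},\overline{x}'',\lambda}}_{=:I_2}
+\underbrace{Z_{\overline{r},\overline{x}'',\lambda}^{\ast}\Delta\xi+2\nabla\xi\nabla Z_{\overline{r},\overline{x}'',\lambda}^{\ast}}_{=:I_3}.
$$
For each $I_k$ I want a pointwise bound of the form $C\lambda^{-1-\varepsilon}\lambda^4\sum_{j}(1+\lambda|x-z_j|)^{-4-\tau}$, after which the definition of $\|\cdot\|_{\ast\ast}$ gives the claim. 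Throughout I would work, as in Lemma \ref{C1}, on one sector $\Omega_1$ at a time, so that $Z_{\overline{r},\overline{x}'',\lambda}$ is dominated by its single bubble $U_{z_1,\lambda}$ plus the controllable tail from the other bubbles via Lemma \ref{B2}; by symmetry this suffices.

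The term $I_2$ is the most straightforward: since $V$ is bounded and $\xi$ is supported where $|(r,x'')-(r_0,x_0'')|\le 2\delta$, and since the bubbles $U_{z_j,\lambda}$ concentrate at $z_j$ which lies at distance $\approx|(\overline r,\overline x'')-(r_0,x_0'')|$ from $(r_0,x_0'')$... actually the key gain here is scaling: away from the center, $\xi U_{z_j,\lambda}(x)\le C\lambda^2(1+\lambda|x-z_j|)^{-4}\le C\lambda^{-\sigma}\cdot\lambda^2(1+\lambda|x-z_j|)^{-4+\sigma}$ is not itself small, so the smallness must come from the $L^{\infty}\to\|\cdot\|_{\ast\ast}$ weighting. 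One has $|V Z_{\overline{r},\overline{x}'',\lambda}|\le C\lambda^2\sum_j(1+\lambda|x-z_j|)^{-4}$, and comparing with the $\|\cdot\|_{\ast\ast}$ weight $\lambda^4\sum_j(1+\lambda|x-z_j|)^{-4-\tau}$ one sees the ratio is $\lesssim \lambda^{-2}(1+\lambda|x-z_j|)^{\tau}$; this is \emph{not} uniformly $\lambda^{-1-\varepsilon}$ for large $|x-z_j|$, so a cruder estimate is not enough. The actual route, following \cite{CWY,PWY}, is to split $V(|x'|,x'')=V(|x'|,x'')-V(z_j)+V(z_j)$ near $z_j$; the oscillation term gains a factor $\lambda|x-z_j|/\lambda$ from $C^1$ regularity of $V$, and the constant $V(z_j)$ piece is absorbed because... in fact here I would instead use that $\xi\equiv 1$ near the concentration points and exploit the decay directly: on the region $|x-z_j|\le\lambda^{-1+\theta}$ for small $\theta$, the weight $(1+\lambda|x-z_j|)^\tau\le\lambda^{\theta\tau}$, giving ratio $\lesssim\lambda^{-2+\theta\tau}$; on the complementary region the faster decay $(1+\lambda|x-z_j|)^{-4}$ versus $(1+\lambda|x-z_j|)^{-4-\tau}$ supplies $(1+\lambda|x-z_j|)^{-\tau}\le\lambda^{-(1-\theta)\tau}$, combining to $\lambda^{-2+\theta\tau}$ overall. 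Choosing $\theta$ small gives the exponent $1+\varepsilon$.

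For $I_1$, the nonlocal cancellation term, I would first use Lemma \ref{P0}: $|x|^{-4}\ast|U_{z_j,\lambda}|^2=CU_{z_j,\lambda}$ exactly, so the "diagonal" contributions cancel up to the cutoff $\xi$ (the error between $\xi U_{z_j,\lambda}$ and $U_{z_j,\lambda}$ being supported far from $z_j$ and hence exponentially-in-$\lambda$ negligible in the $\|\cdot\|_{\ast\ast}$ norm via the decay comparison above). What survives is the cross terms: expanding $|Z_{\overline{r},\overline{x}'',\lambda}|^2=\sum_j|Z_{z_j,\lambda}|^2+\sum_{i\ne k}Z_{z_i,\lambda}Z_{z_k,\lambda}$ and likewise for the outer factor. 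These are controlled by Lemma \ref{B2} to redistribute the product of two bubble-profiles onto a single one at the cost of a negative power of $\lambda|z_i-z_k|$, then by Lemma \ref{B4} for the convolution, then by Lemma \ref{B3} for the remaining $|x|^{-4}$-integral; finally the sum $\sum_{k\ne 1}(\lambda|z_1-z_k|)^{-\tau}\le C$ (the estimate from Lemma B.3 of \cite{WY1} already quoted in the proof of Lemma \ref{C1}) closes it, with the leftover power of $\lambda|z_1-z_k|^{-1}\sim m^{-1}\lambda^{-1}\sim\lambda^{-3/2}$ producing the gain. The main obstacle is bookkeeping here: one must be careful that each application of Lemma \ref{B2} leaves enough decay ($\alpha+\beta-\delta\ge$ what Lemmas \ref{B4}, \ref{B3} require, i.e.\ exponents staying above $6$ and $2$ respectively) so the chain of convolution estimates is legitimate, and one must track the powers of $m$ versus $\lambda\sim m^2$ to see that the total is $\lambda^{-1-\varepsilon}$ and not merely $o(1)$. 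The term $I_3$ is the easiest: $\nabla\xi$ and $\Delta\xi$ are supported in the annulus $\delta\le|(r,x'')-(r_0,x_0'')|\le 2\delta$, a region at fixed positive distance from every $z_j$, so there $U_{z_j,\lambda}(x)$ and its derivatives are bounded by $C\lambda^{-\infty}$ (any negative power of $\lambda$), trivially giving $\|I_3\|_{\ast\ast}\le C\lambda^{-1-\varepsilon}$. Assembling the three bounds and taking $\varepsilon>0$ smaller than the minimum of the three exponent-gains completes the proof.
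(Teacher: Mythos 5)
Your decomposition of $l_m$ into the interaction part, the potential part and the cutoff part is exactly the paper's ($\Phi_1+\Phi_2$, $\Phi_3$, $\Phi_4$), and the toolkit you invoke (the sectors $\Omega_j$, Lemmas \ref{B2}, \ref{B3}, \ref{B4}, \ref{P0}, and the summability of $\sum_{j\neq 1}(\lambda|z_1-z_j|)^{-\alpha}$) is the right one. However, several of your quantitative steps fail as written. For $V Z_{\overline r,\overline x'',\lambda}$: the ratio of $\lambda^{2}(1+\lambda|x-z_j|)^{-4}$ to the weight $\lambda^{4}(1+\lambda|x-z_j|)^{-4-\tau}$ is $\lambda^{-2}(1+\lambda|x-z_j|)^{+\tau}$, so in your ``complementary region'' the comparison of decays produces a \emph{loss} $(1+\lambda|x-z_j|)^{+\tau}$, not the gain $(1+\lambda|x-z_j|)^{-\tau}$ you claim; the two-region argument does not close. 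What actually saves this term is that $Z_{\overline r,\overline x'',\lambda}$ carries the cutoff $\xi$, so $|x-z_j|\le C$ on its support, whence $(1+\lambda|x-z_j|)^{\tau}\le C\lambda^{\tau}$ and the ratio is $C\lambda^{-2+\tau}=C\lambda^{-3/2}$ (in effect Lemma 2.5 of \cite{PWY}, which the paper cites). Likewise, on the support of $\nabla\xi$ and $\Delta\xi$ the bubbles are not $O(\lambda^{-\infty})$: there $U_{z_j,\lambda}=O(\lambda^{-2})$ and $|\nabla U_{z_j,\lambda}|=O(\lambda^{-2})$, only polynomially small; this still suffices, but again via the $\lambda^{-2+\tau}$ comparison, not via superpolynomial decay.

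More seriously, your power count for the interaction term does not reach $\lambda^{-1-\varepsilon}$. Since $\min_{j\neq 1}|z_1-z_j|\sim 1/m$ and $\lambda\sim m^{2}$, a single power $(\lambda|z_1-z_j|)^{-1}$ is only $O(m/\lambda)=O(\lambda^{-1/2})$, not $O(\lambda^{-3/2})$ as you assert; combined with $\sum_{j\neq1}(\lambda|z_1-z_j|)^{-\tau}\le C$ this yields only $O(\lambda^{-1/2})$, which is insufficient. One must instead extract $(\lambda|z_1-z_j|)^{-\alpha}$ with $\alpha>2$ while leaving enough residual decay for Lemmas \ref{B4} and \ref{B3}; the paper takes $\alpha\in(3,\,4-\tau]$ so that $8-\alpha\ge 4+\tau$, and then $\sum_{j\neq1}(\lambda|z_1-z_j|)^{-\alpha}\le C(m/\lambda)^{\alpha}=O(\lambda^{-\alpha/2})=O(\lambda^{-3/2})$. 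So the architecture of your proof is the paper's, but the arithmetic that actually produces the exponent $1+\varepsilon$ has to be redone in all three places.
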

\begin{proof}
Observe that
$$\aligned
l_{m}&=\sum_{i=1}^{m}\Big[\Big(|x|^{-4}\ast |Z_{z_i,\lambda}|^{2}\Big)\sum_{j\neq i}Z_{z_j,\lambda}\Big]+2\sum_{j=1}^{m}\sum_{i\neq j}\Big(|x|^{-4}\ast |Z_{z_j,\lambda}Z_{z_i,\lambda}|\Big)Z_{\overline{r},\overline{x}'',\lambda}
-V(|x'|,x'')Z_{\overline{r},\overline{x}'',\lambda}\\
&+(Z_{\overline{r},\overline{x}'',\lambda}^{\ast}\Delta\xi+2\nabla\xi\nabla Z_{\overline{r},\overline{x}'',\lambda}^{\ast})\\
:&=\Phi_{1}+\Phi_{2}+\Phi_{3}+\Phi_{4}.
\endaligned$$

First, we estimate the term  $\Phi_{1}$. Recall that $|x-z_{j}|\geq|x-z_{1}|, \ \forall x \in\Omega_{1}$, where
$$
\Omega_{j}=\left\{x=(x',x'')\in\mathbb{R}^{2}\times\mathbb{R}^{4}:\Big\langle\frac{x'}{|x'|},
\frac{z_{j}'}{|z_{j}'|}\Big\rangle\geq\cos\frac{\pi}{m}\right\}, \ \ j=1,\cdot\cdot\cdot,m.
$$
By Lemma \ref{P0}, we have
$$
|x|^{-4}\ast |Z_{z_i,\lambda}|^{2}\leq C|x|^{-4}\ast \Big|\frac{\lambda^{2}}{(1+\lambda^{2}|x-z_{i}|^{2})^{2}}\Big|^{2}\leq C \frac{\lambda^{2}}{(1+\lambda|x-z_{i}|)^{4}}.
$$
Taking $0 < \alpha \leq 4$, by Lemma \ref{B2}, we obtain that for any $x\in\Omega_{i}$, $i=1,\cdot\cdot\cdot,m$ and $j \neq i$
$$
\frac{1}{(1+\lambda|x-z_{i}|)^{4}}\frac{1}{(1+\lambda|x-z_{j}|)^{4}}\leq C
\frac{1}{(1+\lambda|x-z_{i}|)^{8-\alpha}}\frac{1}{|\lambda(z_{j}-z_{i})|^{\alpha}}.
$$
We can choose $\alpha> 3$ satisfying $4-\alpha\geq \tau$. Combining this with the fact
$$
\sum_{j\neq i}Z_{z_j,\lambda}\leq\sum_{j=1,\neq i}^{m}\frac{\lambda^{2}}{(1+\lambda|x-z_{j}|)^{4}},
$$
we have
$$
\Big(|x|^{-4}\ast |Z_{z_i,\lambda}|^{2}\Big)\sum_{j\neq i}Z_{z_j,\lambda}
\leq\frac{C\lambda^{4}}{(1+\lambda|x-z_{i}|)^{8-\alpha}}(\frac{m}{\lambda})^{\alpha}
\leq\frac{C\lambda^{4}}{(1+\lambda|x-z_{i}|)^{4+\tau}}(\frac{1}{\lambda})^{1+\varepsilon}.
$$
And so,
$$
|\Phi_{1}|
\leq C (\frac{1}{\lambda})^{1+\varepsilon}\lambda^{4}\sum_{i=1}^{m}\frac{1}{(1+\lambda|x-z_{i}|)^{4+\tau}}.
$$

To estimate $\Phi_{2}$,  by taking $0 < \alpha \leq 4$ and applying Lemma \ref{B2} again, we obtain that for any $x\in\Omega_{j}$ and $i\neq j$
$$
|Z_{z_j,\lambda}Z_{z_i,\lambda}|
\leq C
\frac{\lambda^{4}}{(1+\lambda|x-z_{j}|)^{8-\alpha}}\frac{1}{|\lambda(z_{j}-z_{i})|^{\alpha}}.
$$
We can choose $\alpha> 3$ satisfying $6-\alpha\geq \tau$. Then
$$\aligned
\frac{\lambda^{2}}{(1+\lambda|x-z_{j}|)^{4}}\sum_{i=1,\neq j}^{m}\frac{\lambda^{2}}{(1+\lambda|x-z_{i}|)^{4}}
&\leq\frac{C\lambda^{4}}{(1+\lambda|x-z_{j}|)^{8-\alpha}}(\frac{m}{\lambda})^{\alpha}.
\endaligned$$
By Lemma \ref{B3}, we have
$$\aligned
|\Phi_{2}|&\leq C\sum_{j=1}^{m}\Big(|x|^{-4}\ast \frac{\lambda^{4}}{(1+\lambda|x-z_{j}|)^{8-\alpha}}\Big)|Z_{\overline{r},\overline{x}'',\lambda}|\\
&\leq C(\frac{m}{\lambda})^{\alpha}\sum_{j=1}^{m}\frac{\lambda^{2}}{(1+\lambda|x-z_{j}|)^{6-\alpha}}
\sum_{i=1}^{m}\frac{\lambda^{2}}{(1+\lambda|x-z_{i}|)^{4}}\\
&\leq C (\frac{1}{\lambda})^{1+\varepsilon}\lambda^{4}\sum_{j=1}^{m}\frac{1}{(1+\lambda|x-z_{j}|)^{4+\tau}}.
\endaligned$$

For the remaining terms, it follows from Lemma 2.5 in \cite{PWY} that
$$
|\Phi_{3}|\leq C (\frac{1}{\lambda})^{1+\varepsilon}\lambda^{4}\sum_{j=1}^{m}\frac{1}{(1+\lambda|x-z_{j}|)^{4+\tau}},\ \
|\Phi_{4}|\leq C (\frac{1}{\lambda})^{1+\varepsilon}\lambda^{4}\sum_{j=1}^{m}\frac{1}{(1+\lambda|x-z_{j}|)^{4+\tau}}.
$$
So,
$$
\|l_{m}\|_{\ast\ast}\leq C(\frac{1}{\lambda})^{1+\varepsilon}.
$$
\end{proof}

We are ready to conclude the following estimates for the solution and the constants $c_l$.
\begin{lem}\label{C3}
	There is an integer $m_0 > 0$, such that for each $m \geq m_0$, $\lambda\in[L_0m^{2},L_1m^{2}]$,
	$\overline{r}\in[r_{0}-\theta,r_{0}+\theta]$, $\overline{x}''\in B_{\theta}(x_{0}'')$, where $\theta> 0$ is a fixed small constant, \eqref{c14} has a unique solution $\phi= \phi_{\overline{r},\overline{x}'',\lambda}\in H_{s}$,
	satisfying
	\begin{equation}\label{c15}
		\|\phi\|_{\ast}\leq C(\frac{1}{\lambda})^{1+\varepsilon}, \ \ |c_{l}|\leq C(\frac{1}{\lambda})^{1+n_{l}+\varepsilon},
	\end{equation}
	where $\varepsilon> 0$ is a small constant.
\end{lem}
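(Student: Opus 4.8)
The plan is to recast \eqref{c14} as a fixed point problem for $\phi$ and solve it by the contraction mapping theorem in a small ball of the weighted space $E$. By Lemma \ref{C2}, for every $h\in L^\infty(\R^6)$ the linear problem \eqref{c1} has a unique solution $L_m(h)\in E$, with $\|L_m(h)\|_\ast\le C\|h\|_{\ast\ast}$ and associated multipliers obeying $|c_l|\le C\lambda^{-n_l}\|h\|_{\ast\ast}$; moreover $L_m$ is linear. In view of the decomposition \eqref{c16}, solving \eqref{c14} is equivalent to finding $\phi\in E$ with
$$
\phi=A(\phi):=L_m\bigl(N(\phi)+l_m\bigr).
$$
I would look for such $\phi$ in the set $\mathcal S:=\{\phi\in E:\|\phi\|_\ast\le M(\tfrac1\lambda)^{1+\varepsilon}\}$, where $\varepsilon>0$ is the exponent furnished by Lemma \ref{C5} and $M>0$ is a large constant to be fixed.

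First I would check that $A$ maps $\mathcal S$ into itself. For $\phi\in\mathcal S$, Lemma \ref{C2} combined with Lemmas \ref{C4} and \ref{C5} gives
$$
\|A(\phi)\|_\ast\le C\bigl(\|N(\phi)\|_{\ast\ast}+\|l_m\|_{\ast\ast}\bigr)\le C\|\phi\|_\ast^2+C\bigl(\tfrac1\lambda\bigr)^{1+\varepsilon}\le CM^2\bigl(\tfrac1\lambda\bigr)^{2+2\varepsilon}+C\bigl(\tfrac1\lambda\bigr)^{1+\varepsilon}.
$$
Since $\lambda\in[L_0m^2,L_1m^2]\to\infty$, the first term is $o\bigl((\tfrac1\lambda)^{1+\varepsilon}\bigr)$, so fixing $M=2C$ and taking $m$ large yields $\|A(\phi)\|_\ast\le M(\tfrac1\lambda)^{1+\varepsilon}$, i.e.\ $A(\mathcal S)\subset\mathcal S$.

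Next I would show $A$ is a contraction on $\mathcal S$. By linearity of $L_m$, $A(\phi_1)-A(\phi_2)=L_m\bigl(N(\phi_1)-N(\phi_2)\bigr)$, so it suffices to establish a Lipschitz estimate $\|N(\phi_1)-N(\phi_2)\|_{\ast\ast}\le C\bigl(\|\phi_1\|_\ast+\|\phi_2\|_\ast\bigr)\|\phi_1-\phi_2\|_\ast$. This is obtained by expanding each of the three terms in $N$ — each a convolution with a product bilinear (or trilinear) in its arguments — and estimating the differences exactly as in the proof of Lemma \ref{C4}, using Lemma \ref{B4} to absorb the Riesz potential, Lemma \ref{B2} to redistribute decay among the distinct centers $z_j$, and Lemma \ref{B3} to perform the remaining integration. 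Hence $\|A(\phi_1)-A(\phi_2)\|_\ast\le CM(\tfrac1\lambda)^{1+\varepsilon}\|\phi_1-\phi_2\|_\ast\le\tfrac12\|\phi_1-\phi_2\|_\ast$ for $m$ large. The contraction mapping theorem then produces a unique fixed point $\phi=\phi_{\overline r,\overline x'',\lambda}\in\mathcal S$, which solves \eqref{c14} and is unique among solutions with $\|\phi\|_\ast$ small; the bound $\|\phi\|_\ast\le C(\tfrac1\lambda)^{1+\varepsilon}$ is built into $\mathcal S$. Finally, plugging $h=N(\phi)+l_m$ into the multiplier estimate of Lemma \ref{C2} gives $|c_l|\le C\lambda^{-n_l}\bigl(\|\phi\|_\ast^2+(\tfrac1\lambda)^{1+\varepsilon}\bigr)\le C(\tfrac1\lambda)^{1+n_l+\varepsilon}$. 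All constants are uniform for $\overline r\in[r_0-\theta,r_0+\theta]$ and $\overline x''\in B_\theta(x_0'')$ since the estimates in Lemmas \ref{C1}--\ref{C5} are uniform over this compact parameter set.

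I expect the only real work to be the Lipschitz estimate for $N(\cdot)$ in the $\|\cdot\|_{\ast\ast}$ norm: unlike the local case the nonlocal convolution couples bubbles centered at different $z_j$, so each difference term has to be controlled by the same careful bookkeeping with Lemmas \ref{B2}--\ref{B4} that underlies Lemma \ref{C4}, now carried out for differences rather than single functions. Everything else is the standard Lyapunov--Schmidt fixed point scheme.
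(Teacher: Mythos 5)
Your proposal is correct and follows essentially the same route as the paper: both recast \eqref{c14} as the fixed point problem $\phi=L_m(N(\phi)+l_m)$ and apply the contraction mapping theorem using Lemmas \ref{C2}, \ref{C4} and \ref{C5}, with the same Lipschitz estimate for $N$. The only (cosmetic) difference is that you work in the ball of radius $M(\tfrac1\lambda)^{1+\varepsilon}$ so the final estimate is built in, whereas the paper contracts in the larger ball $\|\phi\|_\ast\le\tfrac1\lambda$ and then reads off the sharper bound a posteriori from \eqref{c13}.
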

\begin{proof}
First, we recall that $\lambda\in [L_0m^{2},L_1m^{2}]$. Set
$$\aligned
\mathcal{N}=\Big\{w:w\in C(\mathbb{R}^6)\cap H_{s},\|w\|_{\ast}\leq\frac{1}{\lambda},\ \
\sum_{j=1}^{m}\int_{\mathbb{R}^{6}}\Big[\Big(|x|^{-4}\ast |Z_{z_j,\lambda}|^{2}\Big)Z_{j,l}w +2\Big(|x|^{-4}\ast |Z_{z_j,\lambda}Z_{j,l}|\Big)Z_{z_j,\lambda}w\Big] dx=0.\Big\},
\endaligned$$
where $l=1,2,\cdots, 6$.
Then \eqref{c16} is equivalent to
\begin{equation}\label{c19}
\phi=\mathcal{A}(\phi)=:L_{m}(N(\phi))+L_{m}(l_{m}),
\end{equation}
where $L_m$ is defined in Lemma \ref{C2}. We will prove that $\mathcal{A}$ is a contraction map from $\mathcal{N}$ to $\mathcal{N}$.

First, we have
$$
\|\mathcal{A}\|_{\ast}\leq C(\|N(\phi)\|_{\ast\ast}+\|l_{m}\|_{\ast\ast})\leq C(\|\phi\|_{\ast}^{2}+(\frac{1}{\lambda})^{1+\varepsilon})\leq
\frac{1}{\lambda}.
$$
Hence, $\mathcal{A}$ maps $\mathcal{N}$ to $\mathcal{N}$.

Taking now $\phi_1$ and $\phi_2$ in $\mathcal{N}$, we see that
$$
\|\mathcal{A}(\phi_{1})-\mathcal{A}(\phi_{2})\|_{\ast}
=\|L_{m}(N(\phi_{1}))-L_{m}(N(\phi_{2}))\|_{\ast}\leq C\|N(\phi_{1})-N(\phi_{2})\|_{\ast\ast}.
$$
It is also easy to see that
$$\aligned
N(\phi_{1})-N(\phi_{2})
=&2\Big(|x|^{-4}\ast (Z_{\overline{r},\overline{x}'',\lambda}\phi_{1})\Big)(\phi_{1}-\phi_{2})+\Big(|x|^{-4}\ast (\phi_{1}+\phi_{2})(\phi_{1}-\phi_{2})\Big)Z_{\overline{r},\overline{x}'',\lambda}\\
&+\Big(|x|^{-4}\ast |\phi_{1}|^{2}\Big)(\phi_{1}-\phi_{2})+2\Big(|x|^{-4}\ast (Z_{\overline{r},\overline{x}'',\lambda}(\phi_{1}-\phi_{2}))\Big)\phi_{2}\\
&+\Big(|x|^{-4}\ast (\phi_{1}+\phi_{2})(\phi_{1}-\phi_{2})\Big)\phi_{2}.
\endaligned$$
Similar to the estimates in Lemma \ref{C4}, we have
$$\aligned
\|N(\phi_{1})-N(\phi_{2})\|_{\ast\ast}\leq &C\|\phi_{1}\|_{\ast} (\|\phi_{1}-\phi_{2}\|_{\ast})+C(\|\phi_{1}\|_{\ast}+\|\phi_{2}\|_{\ast}) (\|\phi_{1}-\phi_{2}\|_{\ast})+C\|\phi_{1}\|_{\ast}^{2} (\|\phi_{1}-\phi_{2}\|_{\ast})\\
&+C\|\phi_{2}\|_{\ast} (\|\phi_{1}-\phi_{2}\|_{\ast})+C(\|\phi_{1}\|_{\ast}+\|\phi_{2}\|_{\ast}) (\|\phi_{1}-\phi_{2}\|_{\ast})\|\phi_{2}\|_{\ast}.
\endaligned$$
Therefore
$$
\|\mathcal{A}(\phi_{1})-\mathcal{A}(\phi_{2})\|_{\ast}\leq C\|N(\phi_{1})-N(\phi_{2})\|_{\ast\ast}\leq \frac{1}{2}\|\phi_{1}-\phi_{2}\|_{\ast},
$$
which means that $\mathcal{A}$ is a contraction mapping from $\mathcal{N}$ into itself.
Thus we know that there exists a unique $\phi\in\mathcal{N}$ such
that \eqref{c19} holds. Moreover, by Lemmas \ref{C2}, \ref{C4} and \ref{C5}, we know
$$
\|\phi\|_{\ast}\leq C(\frac{1}{\lambda})^{1+\varepsilon}
$$
and the estimate of $c_l$ from \eqref{c13}.
\end{proof}

\section{Local Poho\v{z}aev identities methods}
In this section, we will look for suitable $(\overline{r},\overline{x}'',\lambda)$ such that the function $Z_{\overline{r},\overline{x}'',\lambda}+
\phi_{\overline{r},\overline{x}'',\lambda}$ obtained by reduction arguments is a solution of \eqref{CFL}. For this purpose, we need to establish new local Poho\v{z}aev identities for equation \eqref{CFL}.

\begin{lem}\label{D1}
Suppose that $(\overline{r},\overline{x}'',\lambda)$ satisfies
\begin{equation}\label{d1}
	\int_{D_{\rho}}\Big(-\Delta u_{m}+ V(|x'|,x'')u_{m}
	-\Big(|x|^{-4}\ast |u_{m}|^{2}\Big)u_{m}\Big)\langle x,\nabla u_{m}\rangle dx=0,
\end{equation}
\begin{equation}\label{d2}
	\int_{D_{\rho}}\Big(-\Delta u_{m}+ V(|x'|,x'')u_{m}
	-\Big(|x|^{-4}\ast |u_{m}|^{2}\Big)u_{m}\Big)\frac{\partial u_{m}}{\partial x_{i}} dx=0, i=3,\cdot\cdot\cdot,6,
\end{equation}
and
\begin{equation}\label{d3}
	\int_{\mathbb{R}^{6}}(-\Delta u_{m}+ V(|x'|,x'')u_{m}
	-\Big(|x|^{-4}\ast |u_{m}|^{2}\Big)u_{m})\frac{\partial Z_{\overline{r},\overline{x}'',\lambda}}{\partial \lambda} dx=0,
\end{equation}
where $u_{m}=Z_{\overline{r},\overline{x}'',\lambda}+
\phi_{\overline{r},\overline{x}'',\lambda}$ and $D_{\rho}=\{(r, x''):|(r, x'')-(r_0, x_0'' )|\leq\rho\}$ with $\rho\in(2\delta, 5\delta)$. Then $c_{i}=0, i=1,\cdot\cdot\cdot,6$.
\end{lem}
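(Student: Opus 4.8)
The plan is to use the equation satisfied by $u_m$ to turn the three identities \eqref{d1}--\eqref{d3} into a homogeneous linear system for $c_1,\dots,c_6$, and then to show that for $m$ large this system has only the trivial solution. Write
$$
\Xi_l:=\sum_{j=1}^{m}\Big[\big(|x|^{-4}\ast|Z_{z_j,\lambda}|^{2}\big)Z_{j,l}+2\big(|x|^{-4}\ast(Z_{z_j,\lambda}Z_{j,l})\big)Z_{z_j,\lambda}\Big] ,
$$
so that, since $u_m=Z_{\overline{r},\overline{x}'',\lambda}+\phi_{\overline{r},\overline{x}'',\lambda}$ solves \eqref{c14},
$$
-\Delta u_m+V(|x'|,x'')u_m-\big(|x|^{-4}\ast|u_m|^{2}\big)u_m=\sum_{l=1}^{6}c_l\,\Xi_l\qquad\text{in }\R^6 .
$$
Every summand of $\Xi_l$ carries a compactly supported factor ($Z_{j,l}$ or $Z_{z_j,\lambda}$), supported in $\mathrm{supp}\,\xi\subset\{x:|(|x'|,x'')-(r_0,x_0'')|\le2\delta\}$, which lies inside $D_\rho$ because $\rho>2\delta$; hence $\int_{D_\rho}\Xi_l\,\psi\,dx=\int_{\R^6}\Xi_l\,\psi\,dx$ for every smooth $\psi$. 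Substituting the displayed equation into \eqref{d1}, \eqref{d2}, \eqref{d3} therefore gives $\sum_{l=1}^{6}a_{kl}c_l=0$, $k=1,\dots,6$, where
$$
a_{1l}=\int_{\R^6}\Xi_l\langle x,\nabla u_m\rangle\,dx,\qquad
a_{(i-1)l}=\int_{\R^6}\Xi_l\,\partial_{x_i}u_m\,dx\ \ (i=3,\dots,6),\qquad
a_{6l}=\int_{\R^6}\Xi_l\,\partial_\lambda Z_{\overline{r},\overline{x}'',\lambda}\,dx .
$$

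To estimate the coefficients I would first decompose the test functions. Using the scaling identity $\langle x-z_j,\nabla U_{z_j,\lambda}\rangle=\lambda\,\partial_\lambda U_{z_j,\lambda}-2U_{z_j,\lambda}$ together with $\partial_{x_i}U_{z_j,\lambda}=-\partial_{\overline{x}_i''}U_{z_j,\lambda}$ $(i\ge3)$ and $\langle e_{r_j},\nabla U_{z_j,\lambda}\rangle=-\partial_{\overline{r}}U_{z_j,\lambda}$, where $e_{r_j}$ denotes the radial unit vector at $z_j$, and the fact that $\xi$ is independent of $\overline{r},\overline{x}''$, one gets $\partial_{x_i}Z_{\overline{r},\overline{x}'',\lambda}=-\sum_j Z_{j,i}+R_i$ $(i=3,\dots,6)$, $\langle x,\nabla Z_{\overline{r},\overline{x}'',\lambda}\rangle=\sum_j\big(\lambda Z_{j,1}-2Z_{z_j,\lambda}-\overline{r}Z_{j,2}-\sum_{k=3}^{6}\overline{x}_k''Z_{j,k}\big)+R$, and $\partial_\lambda Z_{\overline{r},\overline{x}'',\lambda}=\sum_j Z_{j,1}$, where $R_i,R$ collect terms carrying $\nabla\xi$; these are supported in $\{\delta\le|(r,x'')-(r_0,x_0'')|\le2\delta\}$, away from every $z_j$, where the bubbles are $O(\lambda^{-2})$, hence negligible. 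The $\phi$--contributions $\int\Xi_l\langle x,\nabla\phi\rangle$ and $\int\Xi_l\,\partial_{x_i}\phi$ are handled by moving the derivative onto $\Xi_l$ (no boundary terms, $\Xi_l$ being compactly supported) and bounding by $\|\phi\|_\ast$ times a weighted $L^1$--norm of $\nabla\Xi_l$, estimated via Lemmas \ref{B2}--\ref{B4} as in Section 3; with $\|\phi\|_\ast\le C\lambda^{-1-\varepsilon}$ from Lemma \ref{C3} this yields an error $O(m\lambda^{n_l-\varepsilon})$. It remains to evaluate the pairings $\int_{\R^6}\Xi_l\sum_j Z_{j,t}\,dx$ and $\int_{\R^6}\Xi_l\sum_j Z_{z_j,\lambda}\,dx$. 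Just as in \eqref{c8}--\eqref{c81} (see also \cite{PWY}), the single--bubble diagonal part of the first equals $(\overline{c}+o(1))\,m\,\delta_{lt}\,\lambda^{n_l+n_t}$ with $\overline{c}>0$, because the off--diagonal ($l\ne t$) single--bubble integrals vanish by the parity of $U_{0,1}$ (even) against its translation derivatives (odd), Lemma \ref{P0} reduces the diagonal one to $\int U_{0,1}(\partial_\bullet U_{0,1})^2$-type quantities, and the Riesz kernel $|x|^{-4}$ is positive--definite on $\R^6$; the bubble--bubble interactions and cut--off errors are of strictly lower order, using $\lambda|z_1-z_j|\ge c\,m$ together with Lemmas \ref{B2}--\ref{B4} and \ref{P1}. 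Likewise $\int_{\R^6}\Xi_l\sum_j Z_{z_j,\lambda}\,dx=o(m\lambda^{n_l})$, since the single--bubble diagonal $\int U_{z,\lambda}^2\,\partial_\bullet U_{z,\lambda}\,dx$ vanishes — being $\tfrac13\partial_\lambda\!\int U_{z,\lambda}^3\,dx=0$ in dimension $6$ when $\partial_\bullet=\partial_\lambda$, and $\tfrac12\partial_\bullet\!\int U_{z,\lambda}^2\,dx=0$ by translation invariance otherwise.

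Collecting these estimates, and recalling $n_1=-1$, $n_l=1$ for $l\ge2$, one obtains
$$
a_{6l}=(\overline{c}+o(1))\,m\lambda^{-2}\,\delta_{l1}+o(m\lambda^{-2}),\qquad
a_{(i-1)l}=-(\overline{c}+o(1))\,m\lambda^{2}\,\delta_{li}+o(m\lambda^{2})\ \ (i=3,\dots,6),
$$
$$
a_{1l}=-(\overline{r}\,\overline{c}+o(1))\,m\lambda^{2}\,\delta_{l2}-(\overline{c}\,\overline{x}_l''+o(1))\,m\lambda^{2}\!\sum_{k=3}^{6}\delta_{lk}+o(m\lambda^{2}),
$$
so that $|a_{1l}|\le Cm\lambda^{2}$ for every $l$ and $a_{12}/(m\lambda^{2})\to-r_0\overline{c}\ne0$. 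Put $M=\max_{1\le j\le6}|c_j|$. Dividing the sixth equation by $m\lambda^{-2}$ gives $|c_1|\le o(1)M$; dividing the $(i-1)$--th equation ($i=3,\dots,6$) by $m\lambda^{2}$ gives $|c_i|\le o(1)M$; and dividing the first equation by $m\lambda^{2}$ gives $(\overline{r}\,\overline{c}+o(1))|c_2|\le o(1)|c_1|+C\sum_{l=3}^{6}|c_l|$, whence, inserting the bounds just obtained, $|c_2|\le o(1)M$ for $m$ large (since $\overline{r}\to r_0>0$). Therefore $M\le o(1)M$, which forces $M=0$, i.e. $c_i=0$ for all $i$.

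The step I expect to be the main obstacle is the precise bookkeeping of orders in these pairings: one must separate, in $\int\Xi_l\sum_j Z_{j,t}$ for $l\ne t$, in $\int\Xi_l\sum_j Z_{z_j,\lambda}$, and in the $\phi$--terms, the leading single--bubble contributions (controlled via Lemma \ref{P0} and the parity of $U_{0,1}$) from the bubble--bubble interaction terms and the cut--off errors, and verify that all of these remainders — together with the $\phi$--error controlled by $\|\phi\|_\ast\le C\lambda^{-1-\varepsilon}$ — are genuinely of strictly lower order than the surviving diagonal entries $m\lambda^{-2}$ and $m\lambda^{2}$. This is delicate for \eqref{d1}, where the ``dilation'' piece $-2\sum_j\int\Xi_l Z_{z_j,\lambda}$ has a priori the size $m\lambda^{n_l}$, comparable to the leading part of $a_{1l}$ when $l=1$; it is resolved by the vanishing $\int U_{z,\lambda}^2\partial_\lambda U_{z,\lambda}\,dx=0$ in dimension $6$, and ultimately by performing the elimination in the order \eqref{d3}, \eqref{d2}, \eqref{d1} above. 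The convolution estimates needed throughout are all provided by Lemmas \ref{B2}--\ref{B4} and \ref{P1}.
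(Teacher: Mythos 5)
Your proposal follows essentially the same route as the paper: substitute the equation \eqref{c14} satisfied by $u_m$, use the compact support of $Z_{\overline{r},\overline{x}'',\lambda}$ to replace $D_\rho$ by $\R^6$, compute the diagonal pairings (the paper's \eqref{d5}--\eqref{d7}), estimate the $\phi$-contributions and bubble interactions, and solve the resulting homogeneous linear system. The one point where you over-claim is the bound $a_{6l}=o(m\lambda^{-2})$ for $l\neq 1$: the off-diagonal entries of the $\partial_\lambda$-row consist of interaction terms whose generic size is $\sum_{j\neq j'}(\lambda|z_j-z_{j'}|)^{-4}\sim m\cdot m^4/\lambda^4= m\lambda^{-2}$ (since $\lambda\sim m^2$), i.e.\ only $O(m\lambda^{-2})$ --- comparable to the diagonal entry, not smaller --- and the paper accordingly claims only $o(\lambda^{n_l}\lambda^{n_t})$ per bubble in \eqref{c8}--\eqref{c81}. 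This is why the paper eliminates in the opposite order: \eqref{d1}--\eqref{d2} first yield $c_i=o(\lambda^{-2})c_1$ for $i\geq 2$ (equation \eqref{ci}), and only then does \eqref{d3} force $c_1=0$. Your argument still closes if you weaken your first step to $|c_1|\leq C\max_{l\geq 2}|c_l|$, since the remaining rows give $\max_{l\geq2}|c_l|\leq o(1)\max_j|c_j|$, but as written the step ``dividing the sixth equation by $m\lambda^{-2}$ gives $|c_1|\le o(1)M$'' is not justified.
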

\begin{proof}
Since $Z_{\overline{r},\overline{x}'',\lambda}=0$ in $\mathbb{R}^{6}\backslash D_{\rho}$, we see that if \eqref{d1}-\eqref{d3} hold, then
\begin{equation}\label{d4}
	\sum_{l=1}^{6}c_{l}\sum_{j=1}^{m}\int_{\mathbb{R}^{6}}\Big[\Big(|x|^{-4}\ast |Z_{z_j,\lambda}|^{2}\Big)Z_{j,l}+2\Big(|x|^{-4}\ast (Z_{z_j,\lambda}
	Z_{j,l})\Big)Z_{z_j,\lambda}\Big]vdx=0,
\end{equation}
for $v=\langle x,\nabla u_{m}\rangle$, $\frac{\partial u_{m}}{\partial x_{i}}, i=3,\cdot\cdot\cdot,6$ and $\frac{\partial Z_{\overline{r},\overline{x}'',\lambda}}{\partial \lambda}$.

By direct calculations, we can prove
$$\aligned
&\left|\int_{\mathbb{R}^{6}}\Big(|x|^{-4}\ast |Z_{z_j,\lambda}|^{2}\Big)Z_{j,i}
		\frac{\partial Z_{z_l,\lambda}}{\partial x_{i}}dx\right|\\
&\hspace{4mm}\leq C\lambda^{2}\int_{\mathbb{R}^{6}}\int_{\mathbb{R}^{6}}\frac{1}{(1+|x-\lambda z_{j} |^{2})^{4}}\frac{1}{| x- y|^{4}}\frac{(y_{i}-\lambda\overline{x}_{i})^{2}}{(1+|y-\lambda z_{j} |^{2})^{3}}
\frac{1}{(1+|y-\lambda z_{l} |^{2})^{3}}dxdy,
\endaligned$$
where $ i=3,\cdot\cdot\cdot,6$ and $(\overline{x}_{3}, \overline{x}_{4}, \cdot\cdot\cdot,\overline{x}_{6})=\overline{x}$.
If $l=j$, we have
$$
\left|\int_{\mathbb{R}^{6}}\Big(|x|^{-4}\ast |Z_{z_j,\lambda}|^{2}\Big)Z_{j,i}
\frac{\partial Z_{z_l,\lambda}}{\partial x_{i}}dx\right|=O (\lambda^{2}).
$$
If $l\neq j$, similar to the arguments in the proof of Lemma \ref{C5}, we can prove the following result:
$$
\left|\int_{\mathbb{R}^{6}}\Big(|x|^{-4}\ast |Z_{z_j,\lambda}|^{2}\Big)Z_{j,i}
\left(\sum_{l=1,\neq j}^{m}\frac{\partial Z_{z_l,\lambda}}{\partial x_{i}}\right)dx\right|=O (\lambda^{2-\varepsilon}),
$$
for some $\varepsilon> 0$. So, we can get
\begin{equation}\label{d6}
	\sum_{j=1}^{m}\int_{\mathbb{R}^{6}}\Big[\Big(|x|^{-4}\ast |Z_{z_j,\lambda}|^{2}\Big)Z_{j,i}
	+2\Big(|x|^{-4}\ast (Z_{z_j,\lambda}
	Z_{j,i})\Big)Z_{z_j,\lambda}\Big]\frac{\partial Z_{\overline{r},\overline{x}'',\lambda}}{\partial x_{i}}dx=m(a_{1}+o(1))\lambda^{2}, \ i=3,\cdot\cdot\cdot, 6,
\end{equation}
for some constants $a_{1}\neq0$. Similarly, we have
\begin{equation}\label{d5}
	\sum_{j=1}^{m}\int_{\mathbb{R}^{6}}\Big[\Big(|x|^{-4}\ast |Z_{z_j,\lambda}|^{2}\Big)Z_{j,2}+2\Big(|x|^{-4}\ast (Z_{z_j,\lambda}
	Z_{j,2})\Big)Z_{z_j,\lambda} \Big]\langle x',\nabla_{x'} Z_{\overline{r},\overline{x}'',\lambda}\rangle dx=m(a_{2}+o(1))\lambda^{2},
\end{equation}
and
\begin{equation}\label{d7}
	\sum_{j=1}^{m}\int_{\mathbb{R}^{6}}\Big[\Big(|x|^{-4}\ast |Z_{z_j,\lambda}|^{2}\Big)Z_{j,1} +2\Big(|x|^{-4}\ast (Z_{z_j,\lambda}
	Z_{j,1})\Big)Z_{z_j,\lambda}\Big]\frac{\partial Z_{\overline{r},\overline{y}'',\lambda}}{\partial \lambda}dx=\frac{m}{\lambda^{2}}(a_{3}+o(1)),
\end{equation}
for some constants $a_{2}\neq0$ and $a_3>0$.

It is easy to check that
$$\aligned
&\int_{\mathbb{R}^{6}}\Big(|x|^{-4}\ast |Z_{z_j,\lambda}|^{2}\Big)Z_{j,1}\frac{\partial \phi_{\overline{r},\overline{x}'',\lambda}}{\partial x_{i}}dx\\
=&\int_{\mathbb{R}^6}\int_{\mathbb{R}^6}
\frac{|Z_{z_j,\lambda}(x)|^{2}
	\frac{\partial Z_{j,1}}{\partial y_{i}}(y)\phi_{\overline{r},\overline{x}'',\lambda}(y)}{|x-y|^{4}}dxdy
+\int_{\mathbb{R}^6}\int_{\mathbb{R}^6}
\frac{|Z_{z_j,\lambda}(x)|^{2}
	(x_{i}-y_{i})Z_{j,1}(y)\phi_{\overline{r},\overline{x}'',\lambda}(y)}{|x-y|^{6}}dxdy,
\endaligned$$
where $j=1,2,\cdot\cdot\cdot,m$ and $i=3,\cdot\cdot\cdot,6$. By direct calculations, by \eqref{c03} and \eqref{c15}, we can obtain
$$\aligned
&\left|\int_{\mathbb{R}^6}\int_{\mathbb{R}^6}
\frac{|Z_{z_j,\lambda}(x)|^{2}
	\frac{\partial Z_{j,1}}{\partial y_{i}}(y)\phi_{\overline{r},\overline{x}'',\lambda}(y)}{|x-y|^{4}}dxdy\right|\\
\leq &C\|\phi\|_{\ast}\int_{\mathbb{R}^6}\int_{\mathbb{R}^6}\frac{\lambda^{4}}{(1+\lambda^{2}|x-z_{j} |^{2})^{4}}\frac{1}{|x-y|^{4}}\sum_{k=1}^{m} \frac{\lambda^{2}}{(1+\lambda|y-z_{k}|)^{2+\tau}}\frac{\lambda^{3}|y-z_{j} |}{(1+\lambda^{2}|y-z_{j} |^{2})^{3}}dxdy\\
&+ C\|\phi\|_{\ast}\int_{\mathbb{R}^6}\int_{\mathbb{R}^6}\frac{\lambda^{4}}{(1+\lambda^{2}|x-z_{j} |^{2})^{4}}\frac{1}{|x-y|^{4}}\sum_{k=1}^{m} \frac{\lambda^{2}}{(1+\lambda|y-z_{k}|)^{2+\tau}}\frac{\lambda^{3}|y-z_{j} |}{(1+\lambda^{2}|y-z_{j} |^{2})^{4}}dxdy\\
\leq &C\lambda\|\phi\|_{\ast}\int_{\mathbb{R}^6}\int_{\mathbb{R}^6}\frac{1}{(1+|x-\lambda z_{j} |^{2})^{4}}\frac{1}{|x-y|^{4}}\frac{|y-\lambda z_{j} |}{(1+|y-\lambda z_{j} |^{2})^{5}}dxdy\\
&+ C\lambda\|\phi\|_{\ast}\int_{\mathbb{R}^6}\int_{\mathbb{R}^6}\frac{1}{(1+|x-\lambda z_{j} |^{2})^{4}}\frac{1}{|x-y|^{4}}\frac{|y-\lambda z_{j} |}{(1+|y-\lambda z_{j} |^{2})^{6}}dxdy\\
=&C\lambda\|\phi\|_{\ast}=O(\frac{1}{\lambda^{\varepsilon}}).
\endaligned$$
Similarly, we have
$$
\int_{\mathbb{R}^6}\int_{\mathbb{R}^6}
\frac{|Z_{z_j,\lambda}(x)|^{2}
	(x_{i}-y_{i})Z_{j,1}(y)\phi_{\overline{r},\overline{x}'',\lambda}(y)}{|x-y|^{6}}dxdy=O(\frac{1}{\lambda^{\varepsilon}}).
$$
And so,
$$
\int_{\mathbb{R}^{6}}\Big(|x|^{-4}\ast |Z_{z_j,\lambda}|^{2}\Big)Z_{j,1}\frac{\partial \phi_{\overline{r},\overline{x}'',\lambda}}{\partial x_{i}}dx=O(\frac{1}{\lambda^{\varepsilon}}).
$$
Similarly, we can also conclude that
$$
\int_{\mathbb{R}^{6}}\Big(|x|^{-4}\ast (Z_{z_j,\lambda}
Z_{j,1})\Big)Z_{z_j,\lambda}\frac{\partial \phi_{\overline{r},\overline{x}'',\lambda}}{\partial x_{i}}dx=O(\frac{1}{\lambda^{\varepsilon}}).
$$
Therefore
$$
c_{1}\sum_{j=1}^{m}\int_{\mathbb{R}^{6}}\Big[\Big(|x|^{-4}\ast |Z_{z_j,\lambda}|^{2}\Big)Z_{j,1}+2\Big(|x|^{-4}\ast (Z_{z_j,\lambda}
Z_{j,1})\Big)Z_{z_j,\lambda}\Big]\frac{\partial \phi_{\overline{r},\overline{x}'',\lambda}}{\partial x_{i}}dx=o(m|c_{1}|).
$$
Analogously, we can prove that
$$
\sum_{l=2}^{6}c_{l}\sum_{j=1}^{m}\int_{\mathbb{R}^{6}}\Big[\Big(|x|^{-4}\ast |Z_{z_j,\lambda}|^{2}\Big)Z_{j,l}+2\Big(|x|^{-4}\ast (Z_{z_j,\lambda}
Z_{j,l})\Big)Z_{z_j,\lambda}\Big]\frac{\partial \phi_{\overline{r},\overline{x}'',\lambda}}{\partial x_{i}}dx=o(m\lambda^{2})\sum_{l=2}^{6}|c_{l}|,
$$
and so
$$
\sum_{l=1}^{6}c_{l}\sum_{j=1}^{m}\int_{\mathbb{R}^{6}}\Big[\Big(|x|^{-4}\ast |Z_{z_j,\lambda}|^{2}\Big)Z_{j,l}+2\Big(|x|^{-4}\ast (Z_{z_j,\lambda}
Z_{j,l})\Big)Z_{z_j,\lambda}\Big]\frac{\partial \phi_{\overline{r},\overline{x}'',\lambda}}{\partial x_{i}}dx=o(m\lambda^{2})\sum_{l=2}^{6}|c_{l}|+o(m|c_{1}|),
$$
where $i=3,\cdot\cdot\cdot,6$. Repeating the same arguments, we can also obtain
$$
\sum_{l=1}^{6}c_{l}\sum_{j=1}^{m}\int_{\mathbb{R}^{6}}\Big[\Big(|x|^{-4}\ast |Z_{z_j,\lambda}|^{2}\Big)Z_{j,l}+2\Big(|x|^{-4}\ast (Z_{z_j,\lambda}
Z_{j,l})\Big)Z_{z_j,\lambda}\Big]\langle x,\nabla \phi_{\overline{r},\overline{x}'',\lambda}\rangle dx=o(m\lambda^{2})\sum_{l=2}^{6}|c_{l}|+o(m|c_{1}|).
$$
Therefore, from \eqref{d4}, we deduce that
\begin{equation}\label{d8}
	\sum_{l=1}^{6}c_{l}\sum_{j=1}^{m}\int_{\mathbb{R}^{6}}\Big[\Big(|x|^{-4}\ast |Z_{z_j,\lambda}|^{2}\Big)Z_{j,l}+2\Big(|x|^{-4}\ast (Z_{z_j,\lambda}
	Z_{j,l})\Big)Z_{z_j,\lambda}\Big]vdx=o(m\lambda^{2})\sum_{l=2}^{6}|c_{l}|+o(m|c_{1}|),
\end{equation}
holds for $v=\langle x,\nabla Z_{\overline{r},\overline{x}'',\lambda}\rangle$, $\frac{\partial Z_{\overline{r},\overline{x}'',\lambda}}{\partial x_{i}}$, $ i=3,\cdot\cdot\cdot,6$.

From
$$
\langle x,\nabla Z_{\overline{r},\overline{x}'',\lambda}\rangle=\langle x',\nabla_{x'} Z_{\overline{r},\overline{x}'',\lambda}\rangle+\langle x'',\nabla_{x''} Z_{\overline{r},\overline{x}'',\lambda}\rangle,
$$
we find
\begin{equation}\label{d9}
	\aligned
	&\sum_{l=1}^{6}c_{l}\sum_{j=1}^{m}\int_{\mathbb{R}^{6}}\Big[\Big(|x|^{-4}\ast |Z_{z_j,\lambda}|^{2}\Big)Z_{j,l} +2\Big(|x|^{-4}\ast (Z_{z_j,\lambda}
	Z_{j,l})\Big)Z_{z_j,\lambda}\Big]\langle x,\nabla Z_{\overline{r},\overline{x}'',\lambda}\rangle dx\\
	&\hspace{2mm}=c_{2}\sum_{j=1}^{m}\int_{\mathbb{R}^{6}}\Big[\Big(|x|^{-4}\ast |Z_{z_j,\lambda}|^{2}\Big)Z_{j,2}+2\Big(|x|^{-4}\ast (Z_{z_j,\lambda}
	Z_{j,2})\Big)Z_{z_j,\lambda}\Big]\langle x',\nabla_{x'} Z_{\overline{r},\overline{x}'',\lambda}\rangle dx\\
	&\hspace{4mm}+o(m\lambda^{2})\sum_{l=3}^{6}|c_{l}|+o(m|c_{1}|),
	\endaligned
\end{equation}
and
\begin{equation}\label{d10}
	\aligned
	&\sum_{l=1}^{6}c_{l}\sum_{j=1}^{m}\int_{\mathbb{R}^{6}}\Big[\Big(|x|^{-4}\ast |Z_{z_j,\lambda}|^{2}\Big)Z_{j,l} +2\Big(|x|^{-4}\ast (Z_{z_j,\lambda}
	Z_{j,l})\Big)Z_{z_j,\lambda}\Big]\frac{\partial Z_{\overline{r},\overline{x}'',\lambda}}{\partial x_{i}}dx\\
	&\hspace{2mm}=c_{i}\sum_{j=1}^{m}\int_{\mathbb{R}^{6}}\Big[\Big(|x|^{-4}\ast |Z_{z_j,\lambda}|^{2}\Big)Z_{j,i} +2\Big(|x|^{-4}\ast (Z_{z_j,\lambda}
	Z_{j,i})\Big)Z_{z_j,\lambda}\Big]\frac{\partial Z_{\overline{r},\overline{x}'',\lambda}}{\partial x_{i}}dx\\
	&\hspace{4mm}+o(m\lambda^{2})\sum_{l\neq1,i}^{6}|c_{l}|+o(m|c_{1}|), \ i=3,\cdot\cdot\cdot, 6.
	\endaligned
\end{equation}

Combining \eqref{d8}-\eqref{d10}, we are led to
$$
c_{2}\sum_{j=1}^{m}\int_{\mathbb{R}^{6}}\Big[\Big(|x|^{-4}\ast |Z_{z_j,\lambda}|^{2}\Big)Z_{j,l}+2\Big(|x|^{-4}\ast (Z_{z_j,\lambda}
Z_{j,l})\Big)Z_{z_j,\lambda}\Big]\langle x',\nabla_{x'} Z_{\overline{r},\overline{x}'',\lambda}\rangle dx=o(m\lambda^{2})\sum_{l=3}^{6}|c_{l}|+o(m|c_{1}|),
$$
and
$$
c_{i}\sum_{j=1}^{m}\int_{\mathbb{R}^{6}}\Big[\Big(|x|^{-4}\ast |Z_{z_j,\lambda}|^{2}\Big)Z_{j,i}+2\Big(|x|^{-4}\ast (Z_{z_j,\lambda}
Z_{j,i})\Big)Z_{z_j,\lambda}\Big]\frac{\partial Z_{\overline{r},\overline{x}'',\lambda}}{\partial x_{i}} dx=o(m\lambda^{2})\sum_{l\neq1,i}^{6}|c_{l}|+o(m|c_{1}|), \ i=3,\cdot\cdot\cdot, 6,
$$
which, together with \eqref{d5} and \eqref{d6}, imply
\begin{equation}\label{ci}
c_{i}=o(\frac{1}{\lambda^{2}})c_{1}, \ i=2,\cdot\cdot\cdot, 6.
\end{equation}

Now we have
$$\aligned
0=&\sum_{l=1}^{6}c_{l}\sum_{j=1}^{m}\int_{\mathbb{R}^{6}}\Big[\Big(|x|^{-4}\ast |Z_{z_j,\lambda}|^{2}\Big)Z_{j,l}+2\Big(|x|^{-4}\ast (Z_{z_j,\lambda}
Z_{j,l})\Big)Z_{z_j,\lambda}\Big]\frac{\partial Z_{\overline{r},\overline{x}'',\lambda}}{\partial \lambda} dx\\
=&c_{1}\sum_{j=1}^{m}\int_{\mathbb{R}^{6}}\Big[\Big(|x|^{-4}\ast |Z_{z_j,\lambda}|^{2}\Big)Z_{j,1}+2\Big(|x|^{-4}\ast (Z_{z_j,\lambda}
Z_{j,1})\Big)Z_{z_j,\lambda}\Big]\frac{\partial Z_{\overline{r},\overline{x}'',\lambda}}{\partial \lambda} dx+o(\frac{m}{\lambda^{2}})c_{1}\\
=&m(a_{3}+o(1))c_{1}+o(\frac{m}{\lambda^{2}})c_{1}.
\endaligned$$
So $c_1=0$.
\end{proof}

\begin{lem}\label{P2}
We have
$$
\int_{\mathbb{R}^6}\int_{\mathbb{R}^6}
\frac{|Z_{\overline{r},\overline{x}'',\lambda}^{\ast}(x)|^{2}
	Z_{\overline{r},\overline{x}'',\lambda}^{\ast}(y)\frac{\partial Z_{\overline{r},\overline{x}'',\lambda}^{\ast}}{\partial \overline{r}}(y)}{|x-y|^{4}}dxdy-\int_{\mathbb{R}^6}\int_{\mathbb{R}^6}
\frac{|Z_{\overline{r},\overline{x}'',\lambda}(x)|^{2}
	Z_{\overline{r},\overline{x}'',\lambda}(y)\frac{\partial Z_{\overline{r},\overline{x}'',\lambda}}{\partial \overline{r}}(y)}{|x-y|^{4}}dxdy=O(\frac{1}{\lambda^{2}}).
$$
\end{lem}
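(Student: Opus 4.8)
The plan is to isolate the effect of the cut‑off $\xi$ and show that it is negligible. Write $Z_{\overline{r},\overline{x}'',\lambda}^{\ast}=Z_{\overline{r},\overline{x}'',\lambda}+\Psi$, where $\Psi:=(1-\xi)\,Z_{\overline{r},\overline{x}'',\lambda}^{\ast}=\sum_{j=1}^{m}(1-\xi)U_{z_j,\lambda}$; since $\xi$ does not depend on $\overline{r}$ we also have $\partial_{\overline{r}}Z_{\overline{r},\overline{x}'',\lambda}^{\ast}=\partial_{\overline{r}}Z_{\overline{r},\overline{x}'',\lambda}+\partial_{\overline{r}}\Psi$ with $\partial_{\overline{r}}\Psi=(1-\xi)\,\partial_{\overline{r}}Z_{\overline{r},\overline{x}'',\lambda}^{\ast}$. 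Both $\Psi$ and $\partial_{\overline{r}}\Psi$ are supported in $\mathcal{D}_{\xi}:=\{x\in\mathbb{R}^{6}:\,|(|x'|,x'')-(r_0,x_0'')|\ge\delta\}$. The geometric point is that on $\mathcal{D}_{\xi}$ every bubble centre is far: since $|z_j'|=\overline{r}$, $z_j''=\overline{x}''$ and $|(\overline{r},\overline{x}'')-(r_0,x_0'')|\le\vartheta<\delta$, one has $|x-z_j|\ge\big|(|x'|,x'')-(\overline{r},\overline{x}'')\big|\ge\delta-\vartheta=:c_0>0$ for all $x\in\mathcal{D}_{\xi}$ and all $j$. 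Consequently, on $\mathcal{D}_{\xi}$ each bubble obeys $U_{z_j,\lambda}(x)\le C\lambda^{2}(1+\lambda c_0)^{-4}=O(\lambda^{-2})$ and $|\partial_{\overline{r}}U_{z_j,\lambda}(x)|\le C\lambda^{4}|x-z_j|(1+\lambda|x-z_j|)^{-6}=O(\lambda^{-2})$, while still retaining the full polynomial decay $U_{z_j,\lambda}\le C\lambda^{2}(1+\lambda|x-z_j|)^{-4}$ and $|\partial_{\overline{r}}U_{z_j,\lambda}|\le C\lambda^{3}(1+\lambda|x-z_j|)^{-5}$, which is needed to make the spatial integrals below converge.

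The quantity to be estimated is $\mathcal{Q}(Z_{\overline{r},\overline{x}'',\lambda}^{\ast})-\mathcal{Q}(Z_{\overline{r},\overline{x}'',\lambda})$, where $\mathcal{Q}(A):=\int_{\mathbb{R}^6}\int_{\mathbb{R}^6}|x-y|^{-4}A^{2}(x)A(y)\partial_{\overline{r}}A(y)\,dx\,dy$. Substituting $Z^{\ast}=Z+\Psi$ and expanding, $\mathcal{Q}(Z^{\ast})-\mathcal{Q}(Z)$ becomes a finite sum of integrals $\int\int|x-y|^{-4}a(x)b(x)c(y)d(y)\,dx\,dy$ in each of which at least one factor among $a,b,c,d$ equals $\Psi$ or $\partial_{\overline{r}}\Psi$, the remaining ones being among $Z_{\overline{r},\overline{x}'',\lambda}$, $Z_{\overline{r},\overline{x}'',\lambda}^{\ast}$ and their $\overline{r}$‑derivatives. (Alternatively, using the identity $\mathcal{Q}(A)=\tfrac14\,\partial_{\overline{r}}\!\int\int|x-y|^{-4}A^{2}(x)A^{2}(y)\,dx\,dy$, the difference reduces to the three terms $\langle Z\partial_{\overline{r}}Z,\Theta\rangle$, $\tfrac12\langle Z^{2},\partial_{\overline{r}}\Theta\rangle$ and $\tfrac12\langle\Theta,\partial_{\overline{r}}\Theta\rangle$ with $\Theta:=Z^{\ast 2}-Z^{2}$ supported in $\mathcal{D}_{\xi}$ and $\langle f,g\rangle:=\int\int|x-y|^{-4}f(x)g(y)$; I would keep whichever form is notationally lighter.)

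For each of these terms I would first perform the $y$‑convolution against $|x|^{-4}$, using Lemma \ref{P0} for factors of type $|x|^{-4}\ast U_{z_j,\lambda}^{2}=CU_{z_j,\lambda}$ and Lemmas \ref{B3}, \ref{B4} (together with the obvious rescalings $|x|^{-4}\ast\lambda^{a}(1+\lambda|\cdot-z_j|)^{-b}\le C\lambda^{a-2}(1+\lambda|x-z_j|)^{-\min\{4,\,b-2\}}$) for the factors carrying $\partial_{\overline{r}}U_{z_j,\lambda}$. I would then carry out the remaining $x$‑integral over $\mathcal{D}_{\xi}$ by splitting $\mathbb{R}^{6}=\bigcup_{j}\Omega_{j}$ as in Lemma \ref{C1}, collapsing the bubble sums via Lemma \ref{B2} and the bound $\sum_{j\ge2}(\lambda|z_1-z_j|)^{-\tau}\le C$ from \cite{WY1}, and only then inserting $|x-z_1|\ge c_0$, once the remaining polynomial tail $(1+\lambda|x-z_1|)^{-k}$ has been made integrable. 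Each term then comes out bounded by $Cm^{\alpha}\lambda^{-\beta}$ with $\beta$ large (for instance $\beta\ge\tfrac72$ for the terms containing a single $\Psi$, and larger otherwise); since $\lambda\in[L_0m^{2},L_1m^{2}]$ this is $O(m^{\alpha-2\beta})=o(m^{-4})=o(\lambda^{-2})$, which proves the lemma with room to spare.

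The main difficulty is the bookkeeping of the (finitely many) terms rather than any single estimate. Within it, the only genuinely delicate point concerns the terms in which the $\mathcal{D}_{\xi}$‑supported factor is itself not compactly supported, namely $\Psi^{2}$ (respectively $\Theta$) away from the bubbles: there the crude pointwise bound $O(\lambda^{-3})$ on $\mathcal{D}_{\xi}$ is not integrable against the $|x|^{-4}$‑convolution of a bubble quantity, so instead one must keep, on $\Omega_{1}$, the decay $\Psi^{2}(x)\le C\lambda^{4}(1+\lambda|x-z_1|)^{-8+2\tau}$ and use $|x-z_1|\ge c_0$ only to extract the negative power of $\lambda$ from $\int_{|x-z_1|\ge c_0}$. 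All the remaining terms are controlled by the estimates already developed in Section 3. Note that the subtraction of $\mathcal{Q}(Z_{\overline{r},\overline{x}'',\lambda})$ removes the ``main'' contribution, so no single‑bubble cancellation is needed; the identity $\int_{\mathbb{R}^6}U_{z_j,\lambda}^{2}\,\partial_{\overline{r}}U_{z_j,\lambda}\,dx=\tfrac13\partial_{\overline{r}}\!\int_{\mathbb{R}^6}U_{z_j,\lambda}^{3}\,dx=0$ (translation invariance of $\int U_{z_j,\lambda}^{3}$) only serves as a consistency check.
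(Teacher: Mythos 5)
Your proposal is correct and follows essentially the same route as the paper: both isolate the region where the cut-off $\xi$ differs from $1$, exploit that every bubble centre lies at distance at least $\delta-\vartheta$ from that region (so each $U_{z_j,\lambda}$ and $\partial_{\overline{r}}U_{z_j,\lambda}$ is $O(\lambda^{-2})$ there while retaining its polynomial decay), and then sum over the bubble indices using $\lambda\sim m^{2}$. The only cosmetic difference is in how the individual error integrals are bounded: the paper applies the Hardy--Littlewood--Sobolev inequality with $L^{3/2}$ norms to get $O(\lambda^{-4})$ per quadruple of indices and hence $O(m^{4}/\lambda^{4})=O(\lambda^{-2})$, whereas you propose pointwise convolution estimates via Lemmas \ref{B2}--\ref{B4} and the $\Omega_{j}$ decomposition; both work.
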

\begin{proof}
It is easy to check that
$$\aligned
&\int_{\mathbb{R}^6}\int_{\mathbb{R}^6}
\frac{|U_{z_j,\lambda}(x)|^{2}
	U_{z_j,\lambda}(y)\frac{\partial U_{z_j,\lambda}}{\partial \overline{r}}(y)}{|x-y|^{4}}dxdy-\int_{\mathbb{R}^6}\int_{\mathbb{R}^6}
\frac{|Z_{z_j,\lambda}(x)|^{2}
	Z_{z_j,\lambda}(y)\frac{\partial Z_{z_j,\lambda}}{\partial \overline{r}}(y)}{|x-y|^{4}}dxdy\\
=&\int_{\mathbb{R}^6}\int_{\mathbb{R}^6}
\frac{(1-\xi^{2}(x))|U_{z_j,\lambda}(x)|^{2}
	U_{z_j,\lambda}(y)\frac{\partial U_{z_j,\lambda}}{\partial \overline{r}}(y)}{|x-y|^{4}}dxdy+\int_{\mathbb{R}^6}\int_{\mathbb{R}^6}
\frac{|\xi U_{z_j,\lambda}(x)|^{2}
	(1-\xi^{2})U_{z_j,\lambda}(y)\frac{\partial U_{z_j,\lambda}}{\partial \overline{r}}(y)}{|x-y|^{4}}dxdy,
\endaligned$$
where $j=1,2, \cdot\cdot\cdot,m$. On the other hand, by the Hardy-Littlewood-Sobolev inequality and direct calculation, we have
$$\aligned
\int_{\mathbb{R}^6}\int_{\mathbb{R}^6}
&\frac{(1-\xi^{2}(x))|U_{z_j,\lambda}(x)|^{2}
	U_{z_j,\lambda}(y)\frac{\partial U_{z_j,\lambda}}{\partial \overline{r}}(y)}{|x-y|^{4}}dxdy\\
=&C\int_{\mathbb{R}^{6}}\int_{\mathbb{R}^{6}}\frac{(1-\xi^{2}(x+ z_{j}))\lambda^{4}}{(1+\lambda^{2}|x |^{2})^{4}}\frac{1}{|x-y|^{4}}\frac{\lambda^{6}(y_{1}\cos\frac{2(j-1)\pi}{m}
	+y_{2}\sin\frac{2(j-1)\pi}{m})}
{(1+\lambda^{2}|y|^{2})^{5}}dxdy\\
\leq &C \left(\int_{\mathbb{R}^{6}}\left[\frac{(1-\xi^{2}(x+z_{j}))\lambda^{4}}{(1+\lambda^{2}|x |^{2})^{4}}\right]^{\frac{3}{2}}dx\right)^{\frac{2}{3}}
\left(\int_{\mathbb{R}^{6}}\left[\frac{\lambda^{6}|y|^{2}}{(1+\lambda^{2}|y|^{2})^{5}}\right]^{\frac{3}{2}}dy\right)^{\frac{2}{3}}\\
=&O(\frac{1}{\lambda^{4}}),
\endaligned$$
where
$$
\int_{\mathbb{R}^{6}}\left[\frac{(1-\xi^{2}(x+z_{j}))\lambda^{4}}{(1+\lambda^{2}|x |^{2})^{4}}\right]^{\frac{3}{2}}dx
=O(\frac{1}{\lambda^{6}})
\ \mbox{and} \
\int_{\mathbb{R}^{6}}\left[\frac{\lambda^{6}|y|^{2}}{(1+\lambda^{2}|y|^{2})^{5}}\right]^{\frac{3}{2}}dy
=O(1).
$$
So,
$$
\int_{\mathbb{R}^6}\int_{\mathbb{R}^6}
\frac{(1-\xi^{2}(x))|U_{z_j,\lambda}(x)|^{2}
	U_{z_j,\lambda}(y)\frac{\partial U_{z_j,\lambda}}{\partial \overline{r}}(y)}{|x-y|^{4}}dxdy=O(\frac{1}{\lambda^{4}}).
$$
Analogously, it is easy to check that
$$
\int_{\mathbb{R}^6}\int_{\mathbb{R}^6}
\frac{|\xi U_{z_j,\lambda}(x)|^{2}
	(1-\xi^{2})U_{z_j,\lambda}(y)\frac{\partial U_{z_j,\lambda}}{\partial \overline{r}}(y)}{|x-y|^{4}}dxdy=O(\frac{1}{\lambda^{4}}).
$$
Thus, we have
$$
\int_{\mathbb{R}^6}\int_{\mathbb{R}^6}
\frac{|U_{z_j,\lambda}(x)|^{2}
	U_{z_j,\lambda}(y)\frac{\partial U_{z_j,\lambda}}{\partial \overline{r}}(y)}{|x-y|^{4}}dxdy-\int_{\mathbb{R}^6}\int_{\mathbb{R}^6}
\frac{|Z_{z_j,\lambda}(x)|^{2}
	Z_{z_j,\lambda}(y)\frac{\partial Z_{z_j,\lambda}}{\partial \overline{r}}(y)}{|x-y|^{4}}dxdy=O(\frac{1}{\lambda^{4}}),
$$
where $j=1,2, \cdot\cdot\cdot,m$. For other cases, we can obtain
$$
\int_{\mathbb{R}^6}\int_{\mathbb{R}^6}
\frac{U_{z_j,\lambda}(x)U_{z_i,\lambda}(x)
	U_{z_l,\lambda}(y)\frac{\partial U_{z_k,\lambda}}{\partial \overline{r}}(y)}{|x-y|^{4}}dxdy-\int_{\mathbb{R}^6}\int_{\mathbb{R}^6}
\frac{Z_{z_j,\lambda}(x)Z_{z_i,\lambda}(x)
	Z_{z_l,\lambda}(y)\frac{\partial Z_{z_k,\lambda}}{\partial \overline{r}}(y)}{|x-y|^{4}}dxdy=O(\frac{1}{\lambda^{4}}),
$$
where $j,i,l,k=1,2, \cdot\cdot\cdot,m$. Thus,
$$
\int_{\mathbb{R}^6}\int_{\mathbb{R}^6}
\frac{|Z_{\overline{r},\overline{x}'',\lambda}^{\ast}(x)|^{2}
	Z_{\overline{r},\overline{x}'',\lambda}^{\ast}(y)\frac{\partial Z_{\overline{r},\overline{x}'',\lambda}^{\ast}}{\partial \overline{r}}(y)}{|x-y|^{4}}dxdy-\int_{\mathbb{R}^6}\int_{\mathbb{R}^6}
\frac{|Z_{\overline{r},\overline{x}'',\lambda}(x)|^{2}
	Z_{\overline{r},\overline{x}'',\lambda}(y)\frac{\partial Z_{\overline{r},\overline{x}'',\lambda}}{\partial \overline{r}}(y)}{|x-y|^{4}}dxdy
=O(\frac{m^{4}}{\lambda^{4}})=O(\frac{1}{\lambda^{2}}).
$$
\end{proof}

\begin{lem}\label{P3}
We have
$$
\frac{\partial J(Z_{\overline{r},\overline{x}'',\lambda})}{\partial \overline{r}}=\frac{\partial J(Z_{\overline{r},\overline{x}'',\lambda}^{\ast})}{\partial \overline{r}}+O(\frac{1}{\lambda^{2}}),
$$
where
$$
J(u)=\frac{1}{2}\int_{\mathbb{R}^{6}}|\nabla u|^{2}dx+\frac{1}{2}\int_{\mathbb{R}^6} Vu^{2}dx
-\frac{1}{4}
\int_{\mathbb{R}^6}\int_{\mathbb{R}^6}
\frac{|u(x)|^{2}|u(y)|^{2}}{|x-y|^{4}}dxdy.
$$
\end{lem}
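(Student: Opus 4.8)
The plan is to differentiate $J$ along the parameter $\overline{r}$ at $Z:=Z_{\overline{r},\overline{x}'',\lambda}$ and at $Z^{\ast}:=Z^{\ast}_{\overline{r},\overline{x}'',\lambda}$ and to compare the three resulting pieces one at a time, exploiting the single structural fact that $Z=\xi\,Z^{\ast}$, where $\xi=\xi(|x'|,x'')$ does not depend on $\overline{r}$ (nor on $\lambda$). By the chain rule,
\begin{equation}\nonumber
\frac{\partial J(Z)}{\partial\overline{r}}=\int_{\R^6}\nabla Z\cdot\nabla\frac{\partial Z}{\partial\overline{r}}\,dx+\int_{\R^6}V\,Z\,\frac{\partial Z}{\partial\overline{r}}\,dx-\int_{\R^6}\int_{\R^6}\frac{|Z(x)|^2 Z(y)\frac{\partial Z}{\partial\overline{r}}(y)}{|x-y|^4}\,dx\,dy,
\end{equation}
and the identical formula holds with $Z$ replaced by $Z^{\ast}$, so it suffices to show that each of the three corresponding differences is $O(\lambda^{-2})$.

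The nonlocal difference is exactly the quantity estimated in Lemma~\ref{P2}: since $\frac{\partial Z}{\partial\overline{r}}=\xi\,\frac{\partial Z^{\ast}}{\partial\overline{r}}$ and $\partial_{\overline{r}}\xi=0$, it equals
\begin{equation}\nonumber
\int_{\R^6}\!\int_{\R^6}\frac{|Z^{\ast}(x)|^2 Z^{\ast}(y)\partial_{\overline{r}}Z^{\ast}(y)}{|x-y|^4}\,dxdy-\int_{\R^6}\!\int_{\R^6}\frac{|Z(x)|^2 Z(y)\partial_{\overline{r}}Z(y)}{|x-y|^4}\,dxdy=O(\lambda^{-2}).
\end{equation}
For the potential difference, $Z=\xi Z^{\ast}$ and $\partial_{\overline{r}}\xi=0$ give $\int V Z\,\partial_{\overline{r}}Z\,dx-\int V Z^{\ast}\,\partial_{\overline{r}}Z^{\ast}\,dx=\int V(\xi^2-1)\,Z^{\ast}\,\partial_{\overline{r}}Z^{\ast}\,dx$, whose integrand vanishes on $\{|(|x'|,x'')-(r_0,x_0'')|\le\delta\}$ because $\xi\equiv1$ there. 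On the complementary region one has $|x-z_j|\ge|(|x'|,x'')-(\overline{r},\overline{x}'')|\ge\delta-\vartheta=:\delta'>0$ for every $j$, so the bubbles and their $\overline{r}$-derivatives are there only polynomially small tails; expanding $Z^{\ast}\partial_{\overline{r}}Z^{\ast}=\sum_{j,k}U_{z_j,\lambda}\partial_{\overline{r}}U_{z_k,\lambda}$ and handling the diagonal and off-diagonal terms as in the proof of Lemma~\ref{P2} — using this tail decay together with Lemma~\ref{B2} to sum $\sum_{j\ne k}(\lambda|z_j-z_k|)^{-\delta_0}$, and keeping in mind that $\lambda\sim m^2$ — yields $O(\lambda^{-2})$.

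The kinetic difference is treated in the same spirit: writing $\nabla Z=\xi\nabla Z^{\ast}+Z^{\ast}\nabla\xi$ and $\nabla\partial_{\overline{r}}Z=\xi\nabla\partial_{\overline{r}}Z^{\ast}+\partial_{\overline{r}}Z^{\ast}\nabla\xi$, the difference $\int\nabla Z\cdot\nabla\partial_{\overline{r}}Z-\int\nabla Z^{\ast}\cdot\nabla\partial_{\overline{r}}Z^{\ast}$ becomes a sum of four integrals, each carrying a factor $\xi^2-1$ or $\nabla\xi$ and hence again supported in $\{|(|x'|,x'')-(r_0,x_0'')|\ge\delta\}$; there $U_{z_j,\lambda}$ together with its $x$- and $\overline{r}$-derivatives obey the weighted pointwise bounds used throughout Section~3 (a negative power of $\lambda$ times a fast-decaying profile, once $\lambda|x-z_j|\ge\lambda\delta'$), and splitting $\R^6$ into the cones $\Omega_j$ and applying Lemmas~\ref{B2}–\ref{B3} exactly as was done for the term $l_m$ in Lemma~\ref{C5} bounds this difference by $O(\lambda^{-2})$ as well. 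Adding the three estimates proves the lemma. The main obstacle is the bookkeeping in the potential and kinetic terms: the relevant integration region is unbounded, so a naive pointwise estimate is not enough, and one must track precisely how the $m$-fold bubble sums interact with the polynomial tail decay on $\{\xi\ne1\}$ (with $m\sim\lambda^{1/2}$); but, granting Lemma~\ref{P2}, this uses only the same mechanism already exploited in Sections~3 and~4, so no new ingredient is required.
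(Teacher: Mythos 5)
Your proposal is correct and follows essentially the same route as the paper: split $\partial_{\overline r}J$ into kinetic, potential and nonlocal pieces, invoke Lemma \ref{P2} for the nonlocal difference, and control the other two by observing that every remaining term carries a factor $1-\xi^{2}$, $\nabla\xi$ or $\Delta\xi$ and is therefore supported where the bubbles have only polynomial tails, which combined with Lemma \ref{B2} gives $O(m^{2}/\lambda^{4})=O(\lambda^{-3})$. The only cosmetic difference is that you expand $\nabla Z\cdot\nabla\partial_{\overline r}Z$ directly by the product rule, whereas the paper first integrates by parts and uses $\Delta(\xi Z^{\ast})=\xi\Delta Z^{\ast}+Z^{\ast}\Delta\xi+2\nabla\xi\cdot\nabla Z^{\ast}$ together with the equation satisfied by $U_{z_j,\lambda}$; both variants rest on the same estimates.
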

\begin{proof}
It is easy to check that
$$\aligned
\int_{\mathbb{R}^6}V
U_{z_j,\lambda}(y)\frac{\partial U_{z_j,\lambda}}{\partial \overline{r}}(y)dy-\int_{\mathbb{R}^6}
VZ_{z_j,\lambda}(y)\frac{\partial Z_{z_j,\lambda}}{\partial \overline{r}}(y)dy
=&\int_{\mathbb{R}^6}V(1-\xi^{2}(y))
U_{z_j,\lambda}(y)\frac{\partial U_{z_j,\lambda}}{\partial \overline{r}}(y)dy\\
\leq &C\int_{\mathbb{R}^6}(1-\xi^{2}(y))
U_{z_j,\lambda}(y)\frac{\partial U_{z_j,\lambda}}{\partial \overline{r}}(y)dy,
\endaligned$$
where $j=1,2, \cdot\cdot\cdot,m$. On the other hand, direct calculation gives
\begin{equation}\label{z1}
	\aligned
	\left|\int_{\mathbb{R}^6}(1-\xi^{2}(y))
	U_{z_j,\lambda}(y)\frac{\partial U_{z_j,\lambda}}{\partial \overline{r}}(y)dy\right|
&\leq C\int_{\mathbb{R}^{6}}\frac{(1-\xi^{2}(y))\lambda^{2}}{(1+\lambda^{2}|y-z_{j} |^{2})^{2}}\frac{\lambda^{4}|y-z_{j} |}
	{(1+\lambda^{2}|y-z_{j} |^{2})^{3}}dy\\
	&\leq C\int_{\mathbb{R}^{6}}\frac{(1-\xi^{2}(y+ z_{j}))\lambda^{2}}{(1+\lambda^{2}|y |^{2})^{2}}\frac{\lambda^{4}|y|}{(1+\lambda^{2}|y|^{2})^{3}}dy\\
	&\leq C\int_{\mathbb{R}^{6}\setminus B_{\delta-\vartheta} (0)}\frac{\lambda^{2}}{(1+\lambda^{2}|y |^{2})^{2}}\frac{\lambda^{4}|y|}{(1+\lambda^{2}|y|^{2})^{3}}dy\\
	&
	=O(\frac{1}{\lambda^{4}}).
	\endaligned
\end{equation}
Analogously, for other cases, we can obtain
$$
\int_{\mathbb{R}^6}V
U_{z_i,\lambda}(y)\frac{\partial U_{z_j,\lambda}}{\partial \overline{r}}(y)dy-\int_{\mathbb{R}^6}
VZ_{z_i,\lambda}(y)\frac{\partial Z_{z_j,\lambda}}{\partial \overline{r}}(y)dy=O(\frac{1}{\lambda^{4}}),
$$
where $j,i=1,2, \cdot\cdot\cdot,m$. Thus, we have
\begin{equation}\label{z2}
	\int_{\mathbb{R}^{6}}V\Big(Z_{\overline{r},\overline{x}'',\lambda}^{\ast}\frac{\partial Z_{\overline{r},\overline{x}'',\lambda}^{\ast}}{\partial \overline{r}}-Z_{\overline{r},\overline{x}'',\lambda}^{\ast}\frac{\partial Z_{\overline{r},\overline{x}'',\lambda}^{\ast}}{\partial \overline{r}}\Big)dx=O(\frac{m^{2}}{\lambda^{4}}).
\end{equation}

We observe that
$$\aligned
\int_{\mathbb{R}^{6}}&\Delta Z_{\overline{r},\overline{x}'',\lambda}^{\ast}\frac{\partial Z_{\overline{r},\overline{x}'',\lambda}^{\ast}}{\partial \overline{r}}dx-\int_{\mathbb{R}^{6}}\Delta Z_{\overline{r},\overline{x}'',\lambda}\frac{\partial Z_{\overline{r},\overline{x}'',\lambda}}{\partial \overline{r}}dx\\
&=\int_{\mathbb{R}^{6}}\Delta Z_{\overline{r},\overline{x}'',\lambda}^{\ast}\frac{\partial Z_{\overline{r},\overline{x}'',\lambda}^{\ast}}{\partial \overline{r}}dx-\int_{\mathbb{R}^{6}}\xi(\xi\Delta Z_{\overline{r},\overline{x}'',\lambda}^{\ast}+Z_{\overline{r},\overline{x}'',\lambda}^{\ast}\Delta\xi +2\nabla\xi\nabla Z_{\overline{r},\overline{x}'',\lambda}^{\ast})\frac{\partial Z_{\overline{r},\overline{x}'',\lambda}^{\ast}}{\partial \overline{r}}dx.
\endaligned$$
By Lemma \ref{P2}, we know
$$
\int_{\mathbb{R}^{6}}(1-\xi^{2})\Delta Z_{\overline{r},\overline{x}'',\lambda}^{\ast}\frac{\partial Z_{\overline{r},\overline{x}'',\lambda}^{\ast}}{\partial \overline{r}}dx
=\sum_{j=1}^{m}\int_{\mathbb{R}^6}\int_{\mathbb{R}^6}
\frac{(1-\xi^{2}(x))|U_{z_j,\lambda}(x)|^{2}
	U_{z_j,\lambda}(y)\frac{\partial Z_{\overline{r},\overline{x}'',\lambda}^{\ast}}{\partial \overline{r}}(y)}{|x-y|^{4}}dxdy=O(\frac{m^{2}}{\lambda^{4}}).
$$
Similar to the estimates of \eqref{z1}, we have
$$
\int_{\mathbb{R}^{6}}\xi Z_{\overline{r},\overline{x}'',\lambda}^{\ast}\Delta\xi \frac{\partial Z_{\overline{r},\overline{x}'',\lambda}^{\ast}}{\partial \overline{r}}dx=O(\frac{m^{2}}{\lambda^{4}}),
$$
and
$$
\int_{\mathbb{R}^{6}}\xi\nabla\xi\nabla Z_{\overline{r},\overline{x}'',\lambda}^{\ast}\frac{\partial Z_{\overline{r},\overline{x}'',\lambda}^{\ast}}{\partial \overline{r}}dx\leq C\sum_{j=1}^{m}\int_{\mathbb{R}^{6}}\frac{\xi|\nabla\xi|\lambda^{4}|y-z_{j} |}{(1+\lambda^{2}|y-z_{j} |^{2})^{3}}\frac{\partial Z_{\overline{r},\overline{x}'',\lambda}^{\ast}}{\partial \overline{r}}dy=O(\frac{m^{2}}{\lambda^{4}}).
$$
So, we have proved
$$
\int_{\mathbb{R}^{6}}\Delta Z_{\overline{r},\overline{x}'',\lambda}^{\ast}\frac{\partial Z_{\overline{r},\overline{x}'',\lambda}^{\ast}}{\partial \overline{r}}dx-\int_{\mathbb{R}^{6}}\Delta Z_{\overline{r},\overline{x}'',\lambda}\frac{\partial Z_{\overline{r},\overline{x}'',\lambda}}{\partial \overline{r}}dx=O(\frac{m^{2}}{\lambda^{4}})=O(\frac{1}{\lambda^{3}}).
$$

Combining this with \eqref{z1}, \eqref{z2} and Lemma \ref{P2}, we can obtain the result.
\end{proof}

\begin{lem}\label{D2}
We have
$$
\frac{\partial J(Z_{\overline{r},\overline{x}'',\lambda})}{\partial \overline{r}}=m\Big(\frac{B_{1}}{\lambda^{2}}\frac{\partial V(\overline{r},\overline{x}'')}{\partial \overline{r}}+\sum_{j=2}^{m}
\frac{B_{2}}{\overline{r}\lambda^{4}|z_{1}-z_{j}|^{4}}+O(\frac{1}{\lambda^{1+\varepsilon}})\Big),
$$
where $B_j$, $j=1, 2$ are some positive constants.
\end{lem}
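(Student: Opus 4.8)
The plan is to compute $\partial J(Z_{\overline r,\overline x'',\lambda})/\partial\overline r$ by first passing from the cut-off approximate solution to the sum of \emph{exact} bubbles $Z^{\ast}_{\overline r,\overline x'',\lambda}=\sum_{j=1}^{m}U_{z_j,\lambda}$, and then extracting the leading part of $\partial_{\overline r}J(Z^{\ast}_{\overline r,\overline x'',\lambda})$ by separating the self-energy of the individual bubbles (a fixed constant, invisible under differentiation) from the potential contribution and the bubble--bubble interaction. By Lemma~\ref{P3} we have $\partial_{\overline r}J(Z_{\overline r,\overline x'',\lambda})=\partial_{\overline r}J(Z^{\ast}_{\overline r,\overline x'',\lambda})+O(\lambda^{-2})$, and the $O(\lambda^{-2})$ term is absorbed into the claimed remainder $m\,O(\lambda^{-1-\varepsilon})$; so it suffices to treat $\partial_{\overline r}J(Z^{\ast}_{\overline r,\overline x'',\lambda})$.

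Write $J(Z^{\ast})=J_{0}(Z^{\ast})+\tfrac12\int_{\R^{6}}V(Z^{\ast})^{2}$, where $J_{0}(u)=\tfrac12\int|\nabla u|^{2}-\tfrac14\int\!\!\int\frac{|u(x)|^{2}|u(y)|^{2}}{|x-y|^{4}}$ is invariant under translations and dilations. Since $U_{z_j,\lambda}(x)=\lambda^{2}U_{0,1}(\lambda(x-z_j))$ is obtained from $U_{0,1}$ by such a translation and dilation, $J_{0}(U_{z_j,\lambda})=J_{0}(U_{0,1})=:A_{0}$, hence $\partial_{\overline r}\sum_{j}J_{0}(U_{z_j,\lambda})=0$. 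For the remaining interaction part $J_{0}(Z^{\ast})-\sum_{j}J_{0}(U_{z_j,\lambda})$ I use $-\Delta U_{z_i,\lambda}=(|x|^{-4}\ast U_{z_i,\lambda}^{2})U_{z_i,\lambda}=C_{0}U_{z_i,\lambda}^{2}$ (from \eqref{REL} and Lemma~\ref{P0}, with $C_{0}>0$ the constant of Lemma~\ref{P0}) to rewrite each gradient cross product as $\int\nabla U_{z_i,\lambda}\cdot\nabla U_{z_j,\lambda}=C_{0}\int U_{z_i,\lambda}^{2}U_{z_j,\lambda}$, and Lemma~\ref{P0} once more to reduce every quartic cross term with at most two distinct bubble indices to the same type $\int U_{z_i,\lambda}^{2}U_{z_j,\lambda}$; this yields
$$
J_{0}(Z^{\ast})=mA_{0}-B_{0}\sum_{i\neq j}\int_{\R^{6}}U_{z_i,\lambda}^{2}U_{z_j,\lambda}\,dx+R_{1},
$$
with $B_{0}>0$ an explicit constant and $R_{1}$ collecting three--bubble terms and the ``doubly mixed'' nonlocal terms $\int\!\!\int|x-y|^{-4}(U_{z_i,\lambda}U_{z_j,\lambda})(x)(U_{z_i,\lambda}U_{z_j,\lambda})(y)\,dxdy$, for which $\partial_{\overline r}R_{1}$ is of lower order by Lemmas~\ref{B2}--\ref{B4}, the Hardy--Littlewood--Sobolev inequality and the summability $\sum_{j\ge 2}(\lambda|z_1-z_j|)^{-\gamma}\le C$ of \cite{WY1}. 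For the potential term, the weak symmetry of $V$ gives $\tfrac12\partial_{\overline r}\int V(Z^{\ast})^{2}=\tfrac m2\,\partial_{\overline r}\!\int_{\R^6}VU_{z_1,\lambda}^{2}+\tfrac12\sum_{i\neq j}\partial_{\overline r}\!\int_{\R^6}VU_{z_i,\lambda}U_{z_j,\lambda}$; rescaling $x=z_1+y/\lambda$ in the first integral and using the $C^{1}$ regularity of $V$ together with the evenness of $U_{0,1}^{2}$ (which kills the odd first--order correction) gives $\tfrac m2\partial_{\overline r}\int VU_{z_1,\lambda}^{2}=\frac{mB_{1}}{\lambda^{2}}\frac{\partial V(\overline r,\overline x'')}{\partial\overline r}$ up to lower order, with $B_{1}=\tfrac12\int_{\R^{6}}U_{0,1}^{2}>0$, while the cross sum is again of lower order.

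It remains to evaluate $\partial_{\overline r}\sum_{i\neq j}\int U_{z_i,\lambda}^{2}U_{z_j,\lambda}$. By translation and rotation invariance $\int_{\R^6}U_{z_i,\lambda}^{2}U_{z_j,\lambda}\,dx=H(\lambda|z_i-z_j|)$ for a fixed smooth function $H$; expanding its (single--integral) representation $\int_{\R^{6}}(1+|y|^{2})^{-4}(1+|y-t\mathbf{e}|^{2})^{-2}\,dy$ (up to a positive constant) as $t\to\infty$ gives $H(t)=Kt^{-4}(1+o(1))$, $K>0$, the order $t^{-4}$ being exactly that of Lemma~\ref{P1}. Since $|z_i-z_j|=2\overline r\,|\sin\tfrac{(i-j)\pi}{m}|$ is exactly proportional to $\overline r$, it follows that $\partial_{\overline r}\int U_{z_i,\lambda}^{2}U_{z_j,\lambda}=-\dfrac{4K}{\overline r\,\lambda^{4}|z_i-z_j|^{4}}(1+o(1))$ uniformly, the $o(1)$ controlled by $\lambda|z_i-z_j|\gtrsim m\to\infty$. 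Summing over $i\neq j$ and using the axial symmetry to turn the $i$--sum into $m$ equal copies, the previous paragraph yields that $\partial_{\overline r}J(Z^{\ast})$ equals $\frac{mB_{1}}{\lambda^{2}}\frac{\partial V(\overline r,\overline x'')}{\partial\overline r}+m\sum_{j=2}^{m}\frac{B_{2}}{\overline r\,\lambda^{4}|z_1-z_j|^{4}}(1+o(1))$ up to lower-order terms, with $B_{2}=4KB_{0}>0$, which together with Step~1 is the asserted identity.

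The main obstacle is the interaction bookkeeping of the second step: one must verify that, among all gradient and quartic cross products, precisely those reducible via Lemma~\ref{P0} to $\int U_{z_i,\lambda}^{2}U_{z_j,\lambda}$ contribute at the top order $(\lambda|z_i-z_j|)^{-4}$, that their aggregate coefficient $B_{0}$ is a \emph{nonzero} constant of the correct sign (so that $B_{2}>0$), and that all remaining contributions---three--bubble terms, doubly mixed nonlocal terms, the potential cross terms, the cut-off error from $Z\mapsto Z^{\ast}$, and the subleading term in $H$---combine into a quantity of order $\lambda^{-1-\varepsilon}$ strictly below the two displayed main terms; this relies on the convolution estimates of Lemmas~\ref{B2}--\ref{B4},~\ref{P1} and the geometric--series bound from \cite{WY1}.
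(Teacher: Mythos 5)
Your proposal is correct and reaches the lemma by essentially the same mechanism as the paper: reduce to $Z^{\ast}_{\overline r,\overline x'',\lambda}$ via Lemma~\ref{P3}, extract the potential contribution $B_1\lambda^{-2}\partial_{\overline r}V$ by rescaling around each $z_j$, and extract the pairwise interaction from integrals of the type $\int U_{z_i,\lambda}^{2}U_{z_j,\lambda}$, whose $(\lambda|z_i-z_j|)^{-4}$ asymptotics (Lemma~\ref{P1}) combined with $|z_i-z_j|\propto\overline r$ produce the $B_2/(\overline r\lambda^{4}|z_1-z_j|^{4})$ term. The organization differs: you expand the energy first, $J_0(Z^{\ast})=mA_0-B_0\sum_{i\neq j}\int U_{z_i,\lambda}^{2}U_{z_j,\lambda}+R_1$, using translation/dilation invariance of $J_0$ to kill the self-energies, and only then differentiate; the paper instead expands the derivative $\langle J'(Z^{\ast}),\partial_{\overline r}Z^{\ast}\rangle$ directly (display \eqref{ExpenL4.4}) and isolates the leading interaction inside the term $\cq_2$ through an explicit splitting $\Psi_1-\Psi_2+\Psi_3$ of $\partial_{\overline r}U_{z_i,\lambda}$. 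Your route makes the sign of $B_2$ more transparent; the paper's avoids differentiating asymptotic expansions. Two points to tighten in yours: (i) $H(t)=Kt^{-4}(1+o(1))$ does not by itself yield $H'(t)=-4Kt^{-5}(1+o(1))$ — asymptotics of a function do not transfer automatically to its derivative, so you must differentiate under the integral sign and redo the expansion for $H'$ (this is precisely what the paper's $\Psi_2$-computation via Lemma~\ref{P1} accomplishes); (ii) the ``evenness of $U_{0,1}^{2}$ kills the first-order correction'' argument for the potential term tacitly assumes $V\in C^{2}$, whereas $V$ is only $C^{1}$ — but continuity of $\nabla V$ alone already gives the required $o(\lambda^{-2})$ bound for the correction term, so the conclusion is unaffected.
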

\begin{proof}
We know
\begin{equation}\label{ExpenL4.4}
\aligned
\frac{\partial J(Z_{\overline{r},\overline{x}'',\lambda}^{\ast})}{\partial \overline{r}}=&\int_{\mathbb{R}^{6}}V(x)Z_{\overline{r},\overline{x}'',\lambda}^{\ast}\frac{\partial Z_{\overline{r},\overline{x}'',\lambda}^{\ast}}{\partial \overline{r}}dx-\int_{\mathbb{R}^{6}}\Big(|x|^{-4}\ast |Z_{\overline{r},\overline{x}'',\lambda}^{\ast}|^{2}\Big)Z_{\overline{r},\overline{x}'',\lambda}^{\ast}
\frac{\partial Z_{\overline{r},\overline{x}'',\lambda}^{\ast}}{\partial \overline{r}}dx\\
&+\sum_{j=1}^{m}\int_{\mathbb{R}^{6}}\Big(|x|^{-4}\ast |U_{z_j,\lambda}|^{2}\Big)U_{z_j,\lambda}\frac{\partial Z_{\overline{r},\overline{x}'',\lambda}^{\ast}}{\partial \overline{r}}dx\\
:=&\cp_1+\cp2.
\endaligned
\end{equation}

To estimate the first part $\cp_1$ in \eqref{ExpenL4.4}, integrating by parts, we notice that it is easy to show
$$\aligned
&\int_{\mathbb{R}^6}V(x)
U_{z_j,\lambda}(x)\frac{\partial U_{z_j,\lambda}}{\partial \overline{r}}(x)dx
= -\int_{\mathbb{R}^{6}}|U_{0,\lambda}(x)|^{2}\frac{\partial V(x+z_{j})}{\partial \overline{r}} dx
-\int_{\mathbb{R}^6}V(x)
U_{z_j,\lambda}(x)\frac{\partial U_{z_j,\lambda}}{\partial \overline{r}}(x)dx,
\endaligned$$
and so,
$$\aligned
\int_{\mathbb{R}^6}V(x)
U_{z_j,\lambda}(x)\frac{\partial U_{z_j,\lambda}}{\partial \overline{r}}(x)dx
= &C\int_{\mathbb{R}^{6}}|U_{0,\lambda}(x)|^{2}\frac{\partial V(z_{j})}{\partial \overline{r}} dx+C\int_{\mathbb{R}^{6}}|U_{0,\lambda}(x)|^{2}\frac{\partial (V(x+z_{j})-V(z_{j}))}{\partial \overline{r}} dx\\
= &\frac{C}{\lambda^{2}}\frac{\partial V(\overline{r},\overline{x}'')}{\partial \overline{r}}\int_{\mathbb{R}^{6}}|U_{0,1}(x)|^{2}dx+O(\frac{1}{\lambda^{2}}).\\
\endaligned$$
Combining this and the fact
$$\aligned
\int_{\mathbb{R}^6}V(x)
U_{z_1,\lambda}(x)\sum_{j=2}^{m}\frac{\partial U_{z_j,\lambda}}{\partial \overline{r}}(x)dx
\leq &\sum_{j=2}^{m}\frac{C}{\lambda}\int_{\mathbb{R}^{6}}\frac{1}{(1+|x-\lambda z_{1} |)^{4}}\frac{1}
{(1+|x-\lambda z_{j} |)^{5}}dx\\
\leq&\frac{C}{\lambda}\sum_{j=2}^{m}
	\frac{1}{\lambda^{2}|z_{1}-z_{j}|^{2}}=O(\frac{m^{2}}{\lambda^{3}})=O(\frac{1}{\lambda^{2}}),
\endaligned$$
we have
\begin{equation}\label{e00}\aligned
	\cp_1=\int_{\mathbb{R}^{6}}V(x)Z_{\overline{r},\overline{x}'',\lambda}^{\ast}\frac{\partial Z_{\overline{r},\overline{x}'',\lambda}^{\ast}}{\partial \overline{r}}dx=&\frac{Cm}{\lambda^{2}}\frac{\partial V(\overline{r},\overline{x}'')}{\partial \overline{r}}\int_{\mathbb{R}^{6}}|U_{0,1}(x)|^{2}dx+O(\frac{m}{\lambda^{2}})\\
=&\frac{Cm}{\lambda^{2}}\frac{\partial V(\overline{r},\overline{x}'')}{\partial \overline{r}}\int_{\mathbb{R}^{6}}|U_{0,1}(x)|^{2}dx+O(\frac{1}{\lambda^{1+\varepsilon}}).
\endaligned\end{equation}

To estimate the second part $\cp_2$ in \eqref{ExpenL4.4}, we observe that
\begin{equation}\label{5.1}
\aligned
	&\int_{\mathbb{R}^{6}}\Bigg[\Big(|x|^{-4}\ast |Z_{\overline{r},\overline{x}'',\lambda}^{\ast}|^{2}\Big)Z_{\overline{r},\overline{x}'',\lambda}^{\ast}-\sum_{j=1}^{m}\Big(|x|^{-4}\ast |U_{z_j,\lambda}|^{2}\Big)U_{z_j,\lambda}\Bigg] \frac{\partial Z_{\overline{r},\overline{x}'',\lambda}^{\ast}}{\partial \overline{r}}dx\\
&=2m\int_{\mathbb{R}^{6}}\Big(|x|^{-4}\ast |U_{z_1,\lambda}\sum_{i=2}^{m}U_{z_i,\lambda}|\Big)Z_{\overline{r},\overline{x}'',\lambda}^{\ast}\frac{\partial Z_{\overline{r},\overline{x}'',\lambda}^{\ast}}{\partial \overline{r}}dx
	+m\int_{\mathbb{R}^{6}}\Big[\Big(|x|^{-4}\ast |U_{z_1,\lambda}|^{2}\Big)\sum_{i=2}^{m}U_{z_i,\lambda}\Big]\frac{\partial Z_{\overline{r},\overline{x}'',\lambda}^{\ast}}{\partial \overline{r}}dx\\
:&=2m \cq_1+m \cq_2.
\endaligned\end{equation}

First, for the integral part $\cq2$, if $i=2,\cdot\cdot\cdot, m$, applying Lemmas \ref{P0} and \ref{P1}, we have
$$\aligned
\left|\int_{\mathbb{R}^{6}}\Big[\Big(|x|^{-4}\ast |U_{z_1,\lambda}|^{2}\Big)U_{z_i,\lambda}\Big]\frac{\partial U_{z_1,\lambda}}{\partial \overline{r}}dx\right|
\leq&\left|C\int_{\mathbb{R}^{6}}\frac{\lambda^{2}}{(1+\lambda^{2}|x-z_{1} |^{2})^{2}}\frac{\lambda^{2}}{(1+\lambda^{2}|x-z_{i} |^{2})^{2}}\frac{\lambda^{4}(x_{1}-\overline{r})}{(1+\lambda^{2}|x-z_{1} |^{2})^{3}}dx\right|\\
\leq&C\int_{\mathbb{R}^{6}}\frac{\lambda}{(1+|x-\lambda z_{1} |)^{8}}\frac{1}{(1+|x-\lambda z_{i} |)^{5}}dx\\
\leq&
\frac{C}{\lambda^{4}|z_{1}-z_{i}|^{5}}.
\endaligned$$
Consequently,
\begin{equation}\label{5.2}
\aligned
\left|\sum_{i=2}^{m}\int_{\mathbb{R}^{6}}\Big[\Big(|x|^{-4}\ast |U_{z_1,\lambda}|^{2}\Big)U_{z_i,\lambda}\Big]\frac{\partial U_{z_1,\lambda}}{\partial \overline{r}}dx\right|
\leq\sum_{i=2}^{m}\frac{C}{\lambda^{4}|z_{1}-z_{i}|^{5}}=\frac{Cm^{5}}{\lambda^{4}}=O(\frac{1}{\lambda^{1+\varepsilon}}).
\endaligned\end{equation}

Second, by Lemma \ref{P0}, we have
$$\aligned
&\int_{\mathbb{R}^{6}}\Big(|x|^{-4}\ast |U_{z_1,\lambda}|^{2}\Big)U_{z_i,\lambda}\frac{\partial U_{z_i,\lambda}}{\partial \overline{r}}dx\\
=&\frac{C}{\overline{r}}\int_{\mathbb{R}^{6}}\frac{\lambda^{2}}{(1+\lambda^{2}|x-z_{1} |^{2})^{2}}\frac{\lambda^{2}}{(1+\lambda^{2}|x-z_{i} |^{2})^{2}}
\frac{\lambda^{4}[(x_{1}-\overline{r}\cos\frac{2(i-1)\pi}{m})x_{1}
+(x_{2}-\overline{r}\sin\frac{2(i-1)\pi}{m})x_{2}]}
{(1+\lambda^{2}|x-z_{i} |^{2})^{3}}dx\\
&-\frac{C}{\overline{r}}\int_{\mathbb{R}^{6}}\frac{\lambda^{2}}{(1+\lambda^{2}|x-z_{1} |^{2})^{2}}\frac{\lambda^{2}}{(1+\lambda^{2}|x-z_{i} |^{2})^{2}}
\frac{\lambda^{4}|x-z_{i} |^{2}}
{(1+\lambda^{2}|x-z_{i} |^{2})^{3}}dx\\
&+\frac{C}{\overline{r}}\int_{\mathbb{R}^{6}}\frac{\lambda^{2}}{(1+\lambda^{2}|x-z_{1} |^{2})^{2}}\frac{\lambda^{2}}{(1+\lambda^{2}|x-z_{i} |^{2})^{2}}
\frac{\lambda^{4}(x''-\overline{x}'')^{2}}
{(1+\lambda^{2}|x-z_{i} |^{2})^{3}}dx\\
:=&\frac{C}{\overline{r}}\Psi_{1}-\frac{C}{\overline{r}}\Psi_{2}+\frac{C}{\overline{r}}\Psi_{3},
\endaligned$$
where $i=2,\cdot\cdot\cdot, m$. In fact,
$$\aligned
\Psi_{1}&\leq\left|\int_{\mathbb{R}^{6}}\frac{\lambda^{4}}{(1+\lambda^{2}|x-z_{i} |^{2})^{4}}
\frac{\lambda^{4}(x_{1}-\overline{r})x_{1}}
{(1+\lambda^{2}|x-z_{1} |^{2})^{3}}dx\right|+\left|\int_{\mathbb{R}^{6}}\frac{\lambda^{2}}{(1+\lambda^{2}|x-z_{1} |^{2})^{2}}\frac{\lambda^{4}}{(1+\lambda^{2}|x-z_{i} |^{2})^{4}}dx\right|\\
&\leq \frac{C}{\lambda^{4}|z_{1}-z_{i}|^{5}}+ \frac{C}{\lambda^{4}|z_{1}-z_{i}|^{4}}.
\endaligned$$
By \eqref{p1}, we  also have
$$
\Psi_{2}
=\int_{\mathbb{R}^{6}}\frac{\lambda^{4}}{(1+\lambda^{2}|x-z_{1} |^{2})^{2}}\frac{\lambda^{2}}{(1+\lambda^{2}|x-z_{i} |^{2})^{4}}dx
-\int_{\mathbb{R}^{6}}\frac{\lambda^{4}}{(1+\lambda^{2}|x-z_{1} |^{2})^{2}}\frac{\lambda^{2}}{(1+\lambda^{2}|x-z_{i} |^{2})^{5}}dx\\
=\frac{C}{\lambda^{4}|z_{1}-z_{i}|^{4}},
$$
and
$$
\Psi_{3}\leq\frac{C}{\lambda^{4}|z_{1}-z_{i}|^{4}}.
$$
Therefore, we obtain
\begin{equation}\label{5.3}
\aligned
\sum_{i=2}^{m}\int_{\mathbb{R}^{6}}\Big[\Big(|x|^{-4}\ast |U_{z_1,\lambda}|^{2}\Big)U_{z_i,\lambda}\Big]\frac{\partial U_{z_i,\lambda}}{\partial \overline{r}}dx&\leq\sum_{i=2}^{m}\frac{C}{\lambda^{4}|z_{1}-z_{i}|^{4}}+\sum_{i=2}^{m}\frac{C}{\lambda^{4}|z_{1}-z_{i}|^{5}}-\sum_{i=2}^{m}\frac{C}{\overline{r}}\frac{1}{\lambda^{4}|z_{1}-z_{i}|^{4}}\\
&=O(\frac{1}{\lambda^{1+\varepsilon}})-\sum_{i=2}^{m}\frac{C}{\overline{r}}\frac{1}{\lambda^{4}|z_{1}-z_{i}|^{4}}.
\endaligned\end{equation}
Third, for $j\neq1, i$, by Lemma \ref{B2}, we have
$$\aligned
\int_{\mathbb{R}^{6}}&\Big[\Big(|x|^{-4}\ast |U_{z_1,\lambda}|^{2}\Big)U_{z_i,\lambda}\Big]\frac{\partial U_{z_j,\lambda}}{\partial \overline{r}}dx\\
&\leq C\int_{\mathbb{R}^{6}}\frac{\lambda^{2}}{(1+\lambda^{2}|x-z_{1}|^{2})^{2}}
\frac{\lambda^{2}}{(1+\lambda^{2}|x-z_{i} |^{2})^{2}}\frac{\lambda^{4}|x-z_{j}|}
{(1+\lambda^{2}|x-z_{j} |^{2})^{3}}dx\\
&\leq \frac{C\lambda}{\lambda^{4}|z_{1}-z_{i}|^{4}}\int_{\mathbb{R}^{6}}
\Big(\frac{1}{(1+|x-\lambda z_{1}|)^{4}}+
\frac{1}{(1+|x-\lambda z_{i} |)^{4}}\Big)\frac{1}
{(1+|x-\lambda z_{j} |)^{5}}dx\\
&\leq\frac{C\lambda}{\lambda^{4}|z_{1}-z_{i}|^{4}}\frac{C}{\lambda^{2}|z_{1}-z_{j}|^{2}}
\int_{\mathbb{R}^{6}}
\Big(\frac{1}{(1+|x-\lambda z_{1}|)^{7}}+ \frac{1}
{(1+|x-\lambda z_{j} |)^{7}}\Big)dx\\
&\hspace{4mm}+\frac{C\lambda}{\lambda^{4}|z_{1}-z_{i}|^{4}}\frac{C}{\lambda^{2}|z_{i}-z_{j}|^{2}}
\int_{\mathbb{R}^{6}}
\Big(\frac{1}{(1+|x-\lambda z_{i}|)^{7}}+ \frac{1}
{(1+|x-\lambda z_{j} |)^{7}}\Big)dx\\
&\leq\frac{ C}{\lambda^{4}|z_{1}-z_{i}|^{4}}\frac{1}{\lambda|z_{1}-z_{j}|^{2}}
+\frac{ C}{\lambda^{4}|z_{1}-z_{i}|^{4}}\frac{1}{\lambda|z_{i}-z_{j}|^{2}}.
\endaligned$$
Thus, we can get
$$\aligned
\sum_{i=2}^{m}\sum_{j=2,\neq i}^{m}\int_{\mathbb{R}^{6}}&\Big[\Big(|x|^{-4}\ast |U_{z_1,\lambda}|^{2}\Big)U_{z_i,\lambda}\Big]\frac{\partial U_{z_j,\lambda}}{\partial \overline{r}}dx\\
\leq&\sum_{i=2}^{m}\sum_{j=2,\neq i}^{m}\frac{C}{\lambda^{4}|z_{1}-z_{i}|^{4}}\frac{1}{\lambda|z_{1}-z_{j}|^{2}}
+\sum_{i=2}^{m}\sum_{j=2,\neq i}^{m}\frac{C}{\lambda^{4}|z_{1}-z_{i}|^{4}}\frac{1}{\lambda|z_{i}-z_{j}|^{2}}\\
=&\frac{Cm^{6}}{\lambda^{5}}=O(\frac{1}{\lambda^{1+\varepsilon}}).
\endaligned$$
Combining this and \eqref{5.2}, \eqref{5.3}, we have
\begin{equation}\label{5.4}
\int_{\mathbb{R}^{6}}\Big[\Big(|x|^{-4}\ast |U_{z_1,\lambda}|^{2}\Big)\sum_{i=2}^{m}U_{z_i,\lambda}\Big]\frac{\partial Z_{\overline{r},\overline{x}'',\lambda}^{\ast}}{\partial \overline{r}}dx=-\sum_{j=2}^{m}
	\frac{B_{2}}{\overline{r}\lambda^{4}|z_{1}-z_{j}|^{4}}+O(\frac{1}{\lambda^{1+\varepsilon}}),
\end{equation}
for some constant $B_2>0$.

Next, we are going to estimate $\cq_1$. Observe that
\begin{equation}\label{5.5}
\aligned
\int_{\mathbb{R}^{6}}&\Big(|x|^{-4}\ast |U_{z_1,\lambda}\sum_{i=2}^{m}U_{z_i,\lambda}|\Big)Z_{\overline{r},\overline{x}'',\lambda}^{\ast}\frac{\partial Z_{\overline{r},\overline{x}'',\lambda}^{\ast}}{\partial \overline{r}}dx\\
&=\sum_{j=1}^{m}\int_{\mathbb{R}^{6}}\Big(|x|^{-4}\ast |U_{z_1,\lambda}\sum_{i=2}^{m}U_{z_i,\lambda}|\Big)U_{z_j,\lambda}\frac{\partial U_{z_j,\lambda}}{\partial \overline{r}}dx
+\int_{\mathbb{R}^{6}}\Big(|x|^{-4}\ast |U_{z_1,\lambda}\sum_{i=2}^{m}U_{z_i,\lambda}|\Big)U_{z_1,\lambda}\sum_{j=2}^{m}\frac{\partial U_{z_j,\lambda}}{\partial \overline{r}}dx\\
&\hspace{4mm}+(m-1)\sum_{j=1,\neq2}^{m}\int_{\mathbb{R}^{6}}\Big(|x|^{-4}\ast |U_{z_1,\lambda}\sum_{i=2}^{m}U_{z_i,\lambda}|\Big)U_{z_2,\lambda}\frac{\partial U_{z_j,\lambda}}{\partial \overline{r}}dx.
\endaligned\end{equation}

Integrate by parts, we have
$$\aligned
&\left|\int_{\mathbb{R}^{6}}\Big(|x|^{-4}\ast |U_{z_1,\lambda}U_{z_i,\lambda}|\Big)U_{z_j,\lambda}\frac{\partial U_{z_j,\lambda}}{\partial \overline{r}}dx\right|\\
\leq &C
\left|\int_{\mathbb{R}^{6}}\int_{\mathbb{R}^{6}}U_{z_1,\lambda}(x)U_{z_i,\lambda}(x)
\frac{[(x_{1}-y_{1})\cos\frac{2(j-1)\pi}{m}
+(x_{2}-y_{2})\sin\frac{2(j-1)\pi}{m}]}{|x-y|^{6}}|U_{z_j,\lambda}(y)|^{2}dxdy\right|\\
\leq&\left|\int_{\mathbb{R}^{6}}\Big(|y|^{-4}\ast |U_{z_j,\lambda}|^{2}\Big)U_{z_1,\lambda}\frac{\lambda^{4}[(y_{1}-\overline{r}\cos\frac{2(i-1)\pi}{m})\cos\frac{2(j-1)\pi}{m}
+(y_{2}-\overline{r}\sin\frac{2(i-1)\pi}{m})\sin\frac{2(j-1)\pi}{m}]}
{(1+\lambda^{2}|y-z_{i} |^{2})^{3}}dy\right|\\
&+\left|\int_{\mathbb{R}^{6}}\Big(|y|^{-4}\ast |U_{z_j,\lambda}|^{2}\Big)U_{z_i,\lambda}\frac{\lambda^{4}[(y_{1}-\overline{r})\cos\frac{2(j-1)\pi}{m}
+y_{2}\sin\frac{2(j-1)\pi}{m}]}
{(1+\lambda^{2}|y-z_{1} |^{2})^{3}}dy\right|,
\endaligned$$
where $i=2,\cdot\cdot\cdot, m$ and $j=1,\cdot\cdot\cdot, m$.
If $j=1$,  we have
$$\aligned
&\left|\int_{\mathbb{R}^{6}}\Big(|x|^{-4}\ast |U_{z_1,\lambda}U_{z_i,\lambda}|\Big)U_{z_1,\lambda}\frac{\partial U_{z_1,\lambda}}{\partial \overline{r}}dx\right|\\
\leq &\left|\int_{\mathbb{R}^{6}}\frac{\lambda^{2}}{(1+\lambda|x- z_{1}|)^{4}}\frac{\lambda^{2}}{(1+\lambda^{2}|x- z_{1}|^{2})^{2}}\frac{\lambda^{3}}{(1+\lambda |x- z_{i}|)^{5}}dx\right|
+\left|\int_{\mathbb{R}^{6}}\Big(|y|^{-4}\ast |U_{z_1,\lambda}|^{2}\Big)U_{z_i,\lambda}\frac{\partial U_{z_1,\lambda}}{\partial \overline{r}}dy\right|\\
\leq &\int_{\mathbb{R}^{6}}\frac{1}{(1+|x-\lambda z_{1}|)^{8}}\frac{\lambda}{(1+ |x-\lambda z_{i}|)^{5}}dx
+\frac{C}{\lambda^{4}|z_{1}-z_{i}|^{5}}\\
\leq &\frac{C}{\lambda^{4}|z_{1}-z_{i}|^{5}}.
\endaligned$$
Similarly, if $j=i$, we have
$$
\left|\int_{\mathbb{R}^{6}}\Big(|x|^{-4}\ast |U_{z_1,\lambda}U_{z_i,\lambda}|\Big)U_{z_j,\lambda}\frac{\partial U_{z_j,\lambda}}{\partial \overline{r}}dx\right|\leq \frac{C}{\lambda^{4}|z_{1}-z_{i}|^{5}}.
$$
When $j\neq1$ and $j\neq i$, we have
$$\aligned
&\left|\int_{\mathbb{R}^{6}}\Big(|x|^{-4}\ast |U_{z_1,\lambda}U_{z_i,\lambda}|\Big)U_{z_j,\lambda}\frac{\partial U_{z_j,\lambda}}{\partial \overline{r}}dx\right|\\
\leq &\frac{C}{\lambda^{4}|z_{1}-z_{i}|^{4}}\int_{\mathbb{R}^{6}}(\frac{1}{(1+|x-\lambda z_{1}|)^{4}}+\frac{1}{(1+|x-\lambda z_{i}|)^{4}})\frac{1}{(1+|x-\lambda z_{j}|)^{4}}dx\\
\leq &\frac{C}{\lambda^{4}|z_{1}-z_{i}|^{4}}\frac{1}{\lambda|z_{1}-z_{j}|}\int_{\mathbb{R}^{6}}(\frac{1}{(1+|x-\lambda z_{1}|)^{7}}+\frac{1}{(1+|x-\lambda z_{j}|)^{7}})dx\\
&+\frac{C}{\lambda^{4}|z_{1}-z_{i}|^{4}}\frac{1}{\lambda|z_{i}-z_{j}|}\int_{\mathbb{R}^{6}}(\frac{1}{(1+|x-\lambda z_{i}|)^{7}}+\frac{1}{(1+|x-\lambda z_{j}|)^{7}})dx\\
=&\frac{C}{\lambda^{4}|z_{1}-z_{i}|^{4}}\frac{1}{\lambda|z_{1}-z_{j}|}+\frac{C}{\lambda^{4}|z_{1}-z_{i}|^{4}}\frac{1}{\lambda|z_{i}-z_{j}|}.
\endaligned$$
So,
\begin{equation}\label{5.6}
\aligned
&\left|\sum_{j=1}^{m}\int_{\mathbb{R}^{6}}\Big(|x|^{-4}\ast |U_{z_1,\lambda}\sum_{i=2}^{m}U_{z_i,\lambda}|\Big)U_{z_j,\lambda}\frac{\partial U_{z_j,\lambda}}{\partial \overline{r}}dx\right|\\
\leq&\sum_{i=2}^{m}\frac{C}{\lambda^{4}|z_{1}-z_{i}|^{5}}+
\sum_{i=2}^{m}\sum_{j=2,\neq i}^{m}\frac{C}{\lambda^{4}|z_{1}-z_{i}|^{4}}\frac{1}{\lambda|z_{1}-z_{j}|}
+\sum_{i=2}^{m}\sum_{j=2,\neq i}^{m}\frac{C}{\lambda^{4}|z_{1}-z_{i}|^{4}}\frac{1}{\lambda|z_{i}-z_{j}|}\\
=&\frac{Cm^{5}}{\lambda^{4}}+
\sum_{i=2}^{m}\frac{C}{\lambda^{4}|z_{1}-z_{i}|^{4}}\frac{m}{\lambda}\\
=&\frac{C}{\lambda^{\frac{3}{2}}}+
\frac{Cm}{\lambda}\frac{m^{4}}{\lambda^{4}}=O(\frac{1}{\lambda^{1+\varepsilon}}).
\endaligned\end{equation}

Similarly, we have
\begin{equation}\label{5.9}
\int_{\mathbb{R}^{6}}\Big(|x|^{-4}\ast |U_{z_1,\lambda}\sum_{i=2}^{m}U_{z_i,\lambda}|\Big)U_{z_1,\lambda}\sum_{j=2}^{m}\frac{\partial U_{z_j,\lambda}}{\partial \overline{r}}dx=O(\frac{1}{\lambda^{1+\varepsilon}})
\end{equation}
and
\begin{equation}\label{5.10}
(m-1)\sum_{j=1,\neq2}^{m}\int_{\mathbb{R}^{6}}\Big(|x|^{-4}\ast |U_{z_1,\lambda}\sum_{i=2}^{m}U_{z_i,\lambda}|\Big)U_{z_2,\lambda}\frac{\partial U_{z_j,\lambda}}{\partial \overline{r}}dx=O(\frac{1}{\lambda^{1+\varepsilon}}).
\end{equation}
Now, recall that \eqref{5.5}, \eqref{5.6}, \eqref{5.9} and \eqref{5.10}, we have
$$
\int_{\mathbb{R}^{6}}\Big(|x|^{-4}\ast |U_{z_1,\lambda}\sum_{i=2}^{m}U_{z_i,\lambda}|\Big)Z_{\overline{r},\overline{x}'',\lambda}^{\ast}\frac{\partial Z_{\overline{r},\overline{x}'',\lambda}^{\ast}}{\partial \overline{r}}dx=O(\frac{1}{\lambda^{1+\varepsilon}}).
$$
Combining this and \eqref{5.4}, we can obtain
$$\aligned
\int_{\mathbb{R}^{6}}&\Big(|x|^{-4}\ast |Z_{\overline{r},\overline{x}'',\lambda}^{\ast}|^{2}\Big)Z_{\overline{r},\overline{x}'',\lambda}^{\ast}
	\frac{\partial Z_{\overline{r},\overline{x}'',\lambda}^{\ast}}{\partial \overline{r}}dx
	-\sum_{j=1}^{m}\int_{\mathbb{R}^{6}}\Big(|x|^{-4}\ast |U_{z_j,\lambda}|^{2}\Big)U_{z_j,\lambda}\frac{\partial Z_{\overline{r},\overline{x}'',\lambda}^{\ast}}{\partial \overline{r}}dx\\
	&=m\Big(-\sum_{j=2}^{m}
	\frac{B_{2}}{\overline{r}\lambda^{4}|z_{1}-z_{j}|^{4}}+O(\frac{1}{\lambda^{1+\varepsilon}})\Big).
	\endaligned$$
So, by Lemma \ref{P3}, we obtain
	$$
	\frac{\partial J(Z_{\overline{r},\overline{x}'',\lambda})}{\partial \overline{r}}=m\Big(\frac{B_{1}}{\lambda^{2}}\frac{\partial V(\overline{r},\overline{x}'')}{\partial \overline{r}}+\sum_{j=2}^{m}
	\frac{B_{2}}{\overline{r}\lambda^{4}|z_{1}-z_{j}|^{4}}+O(\frac{1}{\lambda^{1+\varepsilon}})\Big),
	$$
	where $B_j$, $j=1, 2$ are some positive constants.
\end{proof}

\begin{lem}\label{D3}
We have
$$\aligned
&\int_{\mathbb{R}^{6}}(-\Delta u_{m}+ V(|x'|,x'')u_{m}
-\Big(|x|^{-4}\ast |u_{m}|^{2}\Big)u_{m})\frac{\partial Z_{\overline{r},\overline{x}'',\lambda}}{\partial \overline{r}} dx\\
=&m\Big(
-\frac{B_1}{\lambda^{3}}V(\overline{r},\overline{x}'')
+\frac{m^{4}B_3}{\lambda^{5}}+O(\frac{1}{\lambda^{1+\varepsilon}})\Big).
\endaligned$$
\end{lem}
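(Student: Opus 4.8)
The plan is to substitute $u_m=Z+\phi$ into the integrand (writing $Z:=Z_{\overline{r},\overline{x}'',\lambda}$, $\phi:=\phi_{\overline{r},\overline{x}'',\lambda}$, and recalling $\|\phi\|_\ast\le C\lambda^{-1-\varepsilon}$ from Lemma \ref{C3}) and to reduce the left-hand side to a finite family of explicitly computable integrals plus remainders of size $O(m\lambda^{-1-\varepsilon})$. First I would write
$$-\Delta u_m+V u_m-\big(|x|^{-4}\ast|u_m|^2\big)u_m=\big(-\Delta Z+VZ-(|x|^{-4}\ast|Z|^2)Z\big)+L\phi-N(\phi),$$
where $L\phi:=-\Delta\phi+V\phi-(|x|^{-4}\ast|Z|^2)\phi-2(|x|^{-4}\ast(Z\phi))Z$ and $N(\phi)$ is the quadratic-and-higher term of \eqref{c16}. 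Since $Z=\xi Z^{\ast}_{\overline{r},\overline{x}'',\lambda}$ and $-\Delta U_{z_j,\lambda}=(|x|^{-4}\ast U_{z_j,\lambda}^2)U_{z_j,\lambda}$ (Lemma \ref{P0}), the first bracket equals $-l_m$, the remainder introduced after \eqref{c16} --- up to a cut-off residue supported where $1-\xi^2\ne0$ --- so it splits into cross-bubble convolution terms, the potential term $-VZ$, and terms carrying $\Delta\xi$ or $\nabla\xi$.

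I would then pair this identity with $\dfrac{\partial Z_{\overline{r},\overline{x}'',\lambda}}{\partial\overline{r}}=\sum_{j=1}^m Z_{j,2}$ and discard the $\phi$-remainders. For $\langle N(\phi),\partial_{\overline{r}}Z\rangle$ I use $\|N(\phi)\|_{\ast\ast}\le C\|\phi\|_\ast^2$ (Lemma \ref{C4}) together with the pointwise convolution bounds of Lemmas \ref{B2}--\ref{B4} and \ref{P1} and the Hardy--Littlewood--Sobolev inequality. For $\langle L\phi,\partial_{\overline{r}}Z\rangle=\langle\phi,L(\partial_{\overline{r}}Z)\rangle$ I use that $\partial_{\overline{r}}U_{z_j,\lambda}=-(\cos\theta_j,\sin\theta_j,0,0,0,0)\cdot\nabla U_{z_j,\lambda}$ is a combination of the translation kernel elements of the single-bubble linearized operator; hence, by the non-degeneracy of Lemma \ref{nondegeneracy}, the leading part of $L(\partial_{\overline{r}}Z)$ cancels and only $V\partial_{\overline{r}}Z$ plus cut-off and cross-bubble residues remain, which paired against $\phi$ give $O(m\lambda^{-1-\varepsilon})$.

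The main term is then $-\int_{\R^6}l_m\,\partial_{\overline{r}}Z\,dx$, equivalently $\partial_{\overline{r}}J(Z)$ in the notation of Lemma \ref{P3}. I would compute it by the techniques of the proof of Lemma \ref{D2} (using Lemmas \ref{P2}, \ref{P3} to pass between the truncated $Z$ and the untruncated $Z^\ast$), but carrying the expansion one further order so that the $V(\overline{r},\overline{x}'')$- and interaction-terms of the statement appear: the potential contribution $\int_{\R^6}VZ\,\partial_{\overline{r}}Z\,dx$ is obtained by translating each bubble to the origin, integrating by parts, and Taylor-expanding $V$ about $(\overline{r},\overline{x}'')$ --- with $\int_{\R^6}U_{z_j,\lambda}^2$ independent of $\overline{r}$ and $\int_{\R^6}U_{0,1}^2<\infty$ producing the $V(\overline{r},\overline{x}'')$-term; the cross-bubble contributions are evaluated via Lemmas \ref{P0}, \ref{P1} for the inner convolutions and summed using $\sum_{j=2}^m|z_1-z_j|^{-4}=\sum_{j=2}^m\big(2\overline{r}\sin\tfrac{(j-1)\pi}{m}\big)^{-4}=(c+o(1))m^4$, which yields the $m^5\lambda^{-5}$-term; the cut-off contributions decay like a high negative power of $\lambda$ since $1-\xi^2$ vanishes near $(r_0,x_0'')$, exactly as in Lemma \ref{P2}. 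Collecting the pieces gives the asserted identity.

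The principal difficulty is the precise treatment of the doubly-nonlocal cross-bubble interactions in the main term --- integrals of the form $\displaystyle\int\!\!\int\frac{U_{z_1}(x)U_{z_i}(x)\,U_{z_l}(y)\,\partial_{\overline{r}}U_{z_k}(y)}{|x-y|^{4}}\,dx\,dy$ involving up to four distinct centres --- where one must extract the leading constant while verifying that every configuration other than $i=l=k=1$ is of strictly lower order. This needs the iterated splitting of Lemmas \ref{B2}--\ref{B4} together with the sharp identity $|x|^{-4}\ast U_{z,\lambda}^2=C\,U_{z,\lambda}$ (Lemma \ref{P0}); it is the nonlocal counterpart of the interaction analysis already carried out in the proof of Lemma \ref{D2}, and is where the only genuinely new estimates enter.
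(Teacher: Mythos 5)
Your proposal follows essentially the same route as the paper: decompose the left-hand side into $\big\langle J'(Z_{\overline{r},\overline{x}'',\lambda}),\partial_{\overline{r}}Z_{\overline{r},\overline{x}'',\lambda}\big\rangle$ plus the linear-in-$\phi$ and quadratic-and-higher-in-$\phi$ remainders, show the remainders are $O(m\lambda^{-1-\varepsilon})$ using the weighted $\ast$-norms and $\|\phi\|_{\ast}\le C\lambda^{-1-\varepsilon}$ (the paper does this via the estimates \eqref{c5}--\eqref{c6} rather than your appeal to ``nondegeneracy'', which is really just the differentiated bubble equation), and then feed in the expansion of Lemma \ref{D2}. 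One caveat, inherited from the paper itself: the computation you outline actually yields the Lemma \ref{D2} expansion, with $\lambda^{-2}\,\partial_{\overline{r}}V(\overline{r},\overline{x}'')$ from the integration by parts and $\sum_{j\ge2}\lambda^{-4}|z_1-z_j|^{-4}\sim m^{4}\lambda^{-4}$ from the interactions, so your assertion that the potential term ``produces the $V(\overline{r},\overline{x}'')$-term'' and that the interaction sum gives $m^{4}\lambda^{-5}$ does not follow from the steps you describe; the displayed right-hand side of the statement matches the $\partial_{\lambda}$-pairing of \eqref{dr0}, and the discrepancy should be flagged rather than silently reproduced.
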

\begin{proof}
Direct calculations show
$$\aligned
\int_{\mathbb{R}^{6}}&(-\Delta u_{m}+ V(|x'|,x'')u_{m}
-\Big(|x|^{-4}\ast |u_{m}|^{2}\Big)u_{m})\frac{\partial Z_{\overline{r},\overline{x}'',\lambda}}{\partial \overline{r}} dx\\
=&\langle J'(Z_{\overline{r},\overline{x}'',\lambda}),\frac{\partial Z_{\overline{r},\overline{x}'',\lambda}}{\partial \overline{r}}\rangle+m\Big\langle -\Delta \phi+ V(r,x'')\phi
-\Big(|x|^{-4}\ast |Z_{\overline{r},\overline{x}'',\lambda}|^{2}\Big)\phi\\
&-2\Big(|x|^{-4}\ast Z_{\overline{r},\overline{x}'',\lambda}\phi\Big)Z_{\overline{r},\overline{x}'',\lambda},\frac{\partial Z_{z_1,\lambda}}{\partial \overline{r}}\Big\rangle-\int_{\mathbb{R}^{6}}\Big(|x|^{-4}\ast |u_{m}|^{2}\Big)u_{m}\frac{\partial Z_{\overline{r},\overline{x}'',\lambda}}{\partial \overline{r}} dx\\
&+\int_{\mathbb{R}^{6}}\Big(|x|^{-4}\ast |Z_{\overline{r},\overline{x}'',\lambda}|^{2}\Big)\phi\frac{\partial Z_{\overline{r},\overline{x}'',\lambda}}{\partial \overline{r}} dx+\int_{\mathbb{R}^{6}}2\Big(|x|^{-4}\ast Z_{\overline{r},\overline{x}'',\lambda}\phi\Big)Z_{\overline{r},\overline{x}'',\lambda}\frac{\partial Z_{\overline{r},\overline{x}'',\lambda}}{\partial \overline{r}} dx\\
&+\int_{\mathbb{R}^{6}}\Big(|x|^{-4}\ast |Z_{\overline{r},\overline{x}'',\lambda}|^{2}\Big)Z_{\overline{r},\overline{x}'',\lambda}\frac{\partial Z_{\overline{r},\overline{x}'',\lambda}}{\partial \overline{r}} dx\\
:=&\langle J'(Z_{\overline{r},\overline{x}'',\lambda}),\frac{\partial Z_{\overline{r},\overline{x}'',\lambda}}{\partial \overline{r}}\rangle+mI_{1}-I_{2}.
\endaligned$$

Using \eqref{c5} and \eqref{c6}, we obtain
$$
I_{1}=O(\frac{\|\phi\|_{\ast}}{\lambda^{\varepsilon}})
=O(\frac{1}{\lambda^{1+\varepsilon}}).
$$

By \eqref{c02}, we have
$$
||x|^{-4}\ast (Z_{\overline{r},\overline{x}'',\lambda}
\phi)|\leq C\|\phi\|_{\ast}\sum_{j=1}^{m}\frac{\lambda^{2}}{(1+\lambda|x-z_{j}|)^{4}}.
$$
So,
$$\aligned
&\left|\int_{\mathbb{R}^6}\Big(|x|^{-4}\ast (Z_{\overline{r},\overline{x}'',\lambda}
\phi)\Big)Z_{\overline{r},\overline{y}'',\lambda} \frac{\partial Z_{z_1,\lambda}}{\partial \overline{r}}dx\right|\\
\leq &C\|\phi\|_{\ast}\|Z_{\overline{r},\overline{x}'',\lambda}\|_{\ast}
\left|\int_{\mathbb{R}^6}\sum_{j=1}^{m}\frac{\lambda^{2}}{(1+\lambda|x-z_{j}|)^{4}}\sum_{j=1}^{m}
\frac{\lambda^{2}}{(1+\lambda|x-z_{j}|)^{2+\tau}}\xi\frac{\partial U_{z_1,\lambda}}{\partial \overline{r}}dx\right|\\
\leq &C\|\phi\|_{\ast}
\left|\int_{\mathbb{R}^6}\sum_{j=1}^{m}
\frac{\lambda^{4}}{(1+\lambda|x-z_{j}|)^{6+\tau}}\xi\frac{\partial U_{z_1,\lambda}}{\partial \overline{r}}dx\right|.
\endaligned$$
Using the same arguments Lemma \ref{D2}, it is easy to show
$$
\Big\langle\Big(|x|^{-4}\ast (Z_{\overline{r},\overline{x}'',\lambda}
\phi)\Big)Z_{\overline{r},\overline{y}'',\lambda}, \frac{\partial Z_{z_1,\lambda}}{\partial \overline{r}}\Big\rangle= O(\frac{1}{\lambda^{1+\varepsilon}}),
$$
and
$$
\Big\langle\Big(|x|^{-4}\ast |Z_{\overline{r},\overline{x}'',\lambda}|^{2}\Big)\phi, \frac{\partial Z_{z_1,\lambda}}{\partial \overline{r}}\Big\rangle= O(\frac{1}{\lambda^{1+\varepsilon}}).
$$

By direct calculations, we can prove
$$\aligned
I_{2}\leq&\int_{\mathbb{R}^{6}}\Big(|x|^{-4}\ast |\phi|^{2}\Big)|\phi||\frac{\partial Z_{\overline{r},\overline{x}'',\lambda}}{\partial \overline{r}}| dx
+\int_{\mathbb{R}^{6}}\Big(|x|^{-4}\ast |\phi|^{2}\Big)|Z_{\overline{r},\overline{x}'',\lambda}||\frac{\partial Z_{\overline{r},\overline{x}'',\lambda}}{\partial \overline{r}}| dx\\
&+2\int_{\mathbb{R}^{6}}\Big(|x|^{-4}\ast |Z_{\overline{r},\overline{x}'',\lambda}\phi|\Big)|\phi||\frac{\partial Z_{\overline{r},\overline{x}'',\lambda}}{\partial \overline{r}}| dx\\
\leq&C\int_{\mathbb{R}^{6}}\Big(|x|^{-4}\ast |\phi|^{2}\Big)|\phi|\xi\sum_{j=1}^{m}|\frac{\partial U_{z_j,\lambda}}{\partial \overline{r}}| dx
+C\int_{\mathbb{R}^{6}}\Big(|x|^{-4}\ast |\phi|^{2}\Big)|Z_{\overline{r},\overline{x}'',\lambda}|\xi\sum_{j=1}^{m}|\frac{\partial U_{z_j,\lambda}}{\partial \overline{r}}| dx\\
&+C\int_{\mathbb{R}^{6}}\Big(|x|^{-4}\ast |Z_{\overline{r},\overline{x}'',\lambda}\phi|\Big)|\phi|\xi\sum_{j=1}^{m}|\frac{\partial U_{z_j,\lambda}}{\partial \overline{r}}|dx\\
\leq&\frac{C\|\phi\|_{\ast}^{3}}{\lambda}\int_{\mathbb{R}^{6}}\Big(|x|^{-4}\ast \Big(\sum_{j=1}^{m}
\frac{\lambda^{2}}{(1+\lambda|x-z_{j}|)^{2+\tau}}\Big)^{2}\Big)\sum_{j=1}^{m}
\frac{\lambda^{2}}{(1+\lambda|x-z_{j}|)^{2+\tau}}\sum_{j=1}^{m}U_{z_j,\lambda} dx\\ &+\frac{C\|\phi\|_{\ast}^{2}\|Z_{\overline{r},\overline{x}'',\lambda}\|_{\ast}}{\lambda}\int_{\mathbb{R}^{6}}\Big(|x|^{-4}\ast \Big(\sum_{j=1}^{m}
\frac{\lambda^{2}}{(1+\lambda|x-z_{j}|)^{2+\tau}}\Big)^{2}\Big)\sum_{j=1}^{m}
\frac{\lambda^{2}}{(1+\lambda|x-z_{j}|)^{2+\tau}}\sum_{j=1}^{m}U_{z_j,\lambda} dx\\ &+\frac{C\|\phi\|_{\ast}^{2}\|Z_{\overline{r},\overline{x}'',\lambda}\|_{\ast}}{\lambda}\int_{\mathbb{R}^{6}}\Big(|x|^{-4}\ast \Big(\sum_{j=1}^{m}
\frac{\lambda^{2}}{(1+\lambda|x-z_{j}|)^{2+\tau}}\Big)^{2}\Big)\sum_{j=1}^{m}
\frac{\lambda^{2}}{(1+\lambda|x-z_{j}|)^{2+\tau}}\sum_{j=1}^{m}U_{z_j,\lambda} dx.
\endaligned$$
Using the same arguments Lemma \ref{D2} and \eqref{c15}, it is easy to show
$$
I_{2}=O(\frac{m}{\lambda^{1+\varepsilon}}).
$$

So, we have proved
$$
\Big\langle J'(Z_{\overline{r},\overline{x}'',\lambda}+\phi),\frac{\partial Z_{\overline{r},\overline{x}'',\lambda}}{\partial \overline{r}}\Big\rangle
=\Big\langle J'(Z_{\overline{r},\overline{x}'',\lambda}),\frac{\partial Z_{\overline{r},\overline{x}'',\lambda}}{\partial \overline{r}}\Big\rangle+O(\frac{m}{\lambda^{1+\varepsilon}}).
$$
From Lemma \ref{D2}, we obtain the result.
\end{proof}

Using the same arguments in Lemma \ref{D2} and Lemma \ref{D3}, we can also prove

\begin{lem}
We have
\begin{equation}\aligned\label{dr0}
	\Big\langle J'(Z_{\overline{r},\overline{x}'',\lambda}+\phi),\frac{\partial Z_{\overline{r},\overline{x}'',\lambda}}{\partial \lambda}\Big\rangle
	=&m\Big(-\frac{B_1}{\lambda^{3}}V(\overline{r},\overline{x}'')+\sum_{j=2}^{m}
	\frac{B_{2}}{\lambda^{5}|z_{1}-z_{j}|^{4}}+O(\frac{1}{\lambda^{3+\varepsilon}})\Big)\\
	=&m\Big(-\frac{B_1}{\lambda^{3}}V(\overline{r},\overline{x}'')+
	\frac{B_{3}m^{4}}{\lambda^{5}}+O(\frac{1}{\lambda^{3+\varepsilon}})\Big),
	\endaligned\end{equation}
and
\begin{equation}\label{dx0}
	\Big\langle J'(Z_{\overline{r},\overline{x}'',\lambda}+\phi),\frac{\partial Z_{\overline{r},\overline{x}'',\lambda}}{\partial \overline{x}_{j}''}\Big\rangle
	=m\Big(\frac{B_1}{\lambda^{2}}\frac{\partial V(\overline{r},\overline{x}'')}{\partial \overline{x}_{j}''}+O(\frac{1}{\lambda^{1+\varepsilon}})\Big), \ \ j=3,\cdot\cdot\cdot,6,
\end{equation}
where $B_i$, $i=1, 2,3$ are some positive constants.
\end{lem}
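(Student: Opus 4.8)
The plan is to repeat, almost verbatim, the arguments of Lemmas~\ref{D2} and \ref{D3}, replacing the direction $\partial_{\overline{r}}Z_{\overline{r},\overline{x}'',\lambda}$ by $\partial_\lambda Z_{\overline{r},\overline{x}'',\lambda}$ and by $\partial_{\overline{x}_j''}Z_{\overline{r},\overline{x}'',\lambda}$, keeping track of the homogeneity in $\lambda$. The direction $\partial_\lambda Z_{z_1,\lambda}$ behaves like $Z_{1,1}$, so $n_1=-1$ and every estimate obtained for the $\partial_{\overline{r}}$ direction loses two powers of $\lambda$; the direction $\partial_{\overline{x}_j''}Z_{z_1,\lambda}$ behaves like $Z_{1,l}$ with $n_l=1$, so it is completely parallel to the $\partial_{\overline{r}}$ case. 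A further structural observation is that translating $\overline{x}''$ moves all the bubble centres $z_j$ rigidly in $\mathbb{R}^4$ and hence does not change the mutual distances $|z_i-z_j|$, so the off-diagonal bubble interaction contributes only to the error in the $\partial_{\overline{x}_j''}$ identity, whereas varying $\lambda$ does change the interaction energy and produces the genuine term $\sum_{j\ge2}B_2\lambda^{-5}|z_1-z_j|^{-4}$.

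First, as in the proof of Lemma~\ref{D3}, I would write, for $\star\in\{\lambda,\overline{x}_3'',\dots,\overline{x}_6''\}$,
$$
\big\langle J'(Z_{\overline{r},\overline{x}'',\lambda}+\phi),\partial_\star Z_{\overline{r},\overline{x}'',\lambda}\big\rangle
=\big\langle J'(Z_{\overline{r},\overline{x}'',\lambda}),\partial_\star Z_{\overline{r},\overline{x}'',\lambda}\big\rangle+mI_1-I_2 ,
$$
where $I_1$ gathers the terms linear in $\phi$ coming from \eqref{c1} and $I_2$ the quadratic and cubic ones. Bounding $I_1$ by means of \eqref{c5}--\eqref{c6} and $I_2$ by Lemma~\ref{C4} together with $\|\phi\|_\ast\le C\lambda^{-1-\varepsilon}$ from \eqref{c15}, the computation of Lemma~\ref{D3} carries over and gives $mI_1-I_2=m\,O(\lambda^{-3-\varepsilon})$ in the $\partial_\lambda$ case and $m\,O(\lambda^{-1-\varepsilon})$ in the $\partial_{\overline{x}_j''}$ case. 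It therefore suffices to expand $\partial_\star J(Z_{\overline{r},\overline{x}'',\lambda})$.

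Next, following Lemmas~\ref{P2} and \ref{P3}, I would replace $Z_{\overline{r},\overline{x}'',\lambda}$ by $Z_{\overline{r},\overline{x}'',\lambda}^\ast=\sum_jU_{z_j,\lambda}$: since $1-\xi^2$ is supported away from the concentration points, the Hardy--Littlewood--Sobolev inequality yields $\partial_\star J(Z_{\overline{r},\overline{x}'',\lambda})=\partial_\star J(Z_{\overline{r},\overline{x}'',\lambda}^\ast)+O(\lambda^{-3})$ for $\star=\lambda$ and $+O(\lambda^{-2})$ for $\star=\overline{x}_j''$. For the reduced functional I would use Lemma~\ref{P0} to write $-\Delta U_{z_j,\lambda}$ as a constant multiple of $(|x|^{-4}\ast|U_{z_j,\lambda}|^2)U_{z_j,\lambda}$; this gives the splitting $\partial_\star J(Z_{\overline{r},\overline{x}'',\lambda}^\ast)=\cp_1+\cp_2$ of \eqref{ExpenL4.4} with $\partial_{\overline{r}}$ replaced by $\partial_\star$, where $\cp_1=\int_{\mathbb{R}^6}V Z_{\overline{r},\overline{x}'',\lambda}^\ast\partial_\star Z_{\overline{r},\overline{x}'',\lambda}^\ast$ and $\cp_2$ is the off-diagonal part of the Hartree nonlinearity. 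For $\cp_1$, integrating by parts exactly as in Lemma~\ref{D2} and using $\int_{\mathbb{R}^6}U_{z,\lambda}^2=\lambda^{-2}\int_{\mathbb{R}^6}U_{0,1}^2$, the main term is $-\frac{B_1}{\lambda^3}V(\overline{r},\overline{x}'')$ for $\star=\lambda$ and $\frac{B_1}{\lambda^2}\frac{\partial V}{\partial\overline{x}_j''}(\overline{r},\overline{x}'')$ for $\star=\overline{x}_j''$, with $B_1=c\int_{\mathbb{R}^6}U_{0,1}^2>0$, the cross terms $\int V U_{z_1,\lambda}\partial_\star U_{z_j,\lambda}$ ($j\ne1$) being absorbed into the error. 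For $\cp_2$, splitting the Hartree term as in \eqref{5.1} and reproducing the estimates \eqref{5.2}--\eqref{5.10} (which use only Lemmas~\ref{B2}, \ref{B3}, \ref{P0}, \ref{P1} and integration by parts), the leading contribution is $-\sum_{j\ge2}\frac{B_2}{\lambda^5|z_1-z_j|^4}$ for $\star=\lambda$ and is of order $O(\lambda^{-1-\varepsilon})$ only for $\star=\overline{x}_j''$, as with $\Phi_1,\Phi_2$ in Lemma~\ref{C5}. Since $|z_1-z_j|\sim\overline{r}\,|j-1|/m$ one has $\sum_{j=2}^m|z_1-z_j|^{-4}=(c_0+o(1))m^4$ (the computation behind Lemma~B.3 of \cite{WY1}), so $\sum_{j\ge2}\frac{B_2}{\lambda^5|z_1-z_j|^4}=\frac{B_3m^4}{\lambda^5}+O(\lambda^{-5})$ for a suitable $B_3>0$; collecting all errors and multiplying by the number $m$ of bubbles gives \eqref{dr0} and \eqref{dx0}.

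The main obstacle, exactly as in Lemmas~\ref{D2}--\ref{D3}, is bookkeeping: one has to verify that every error term --- arising when $\phi$ is separated off, when the cutoff $\xi$ is removed, and above all from the double sum $\sum_{i\ne1}\sum_{j\ne1,i}$ over triple products $U_{z_1,\lambda}U_{z_i,\lambda}\,\partial_\star U_{z_j,\lambda}$, where Lemma~\ref{B2} must be applied twice --- is strictly smaller than the main terms $m\lambda^{-3}V$ and $m^5\lambda^{-5}$ (which are comparable because $\lambda\sim m^2$), i.e.\ genuinely $O(m\lambda^{-3-\varepsilon})$, resp.\ $O(m\lambda^{-1-\varepsilon})$; tracking the powers of $m/\lambda$ in these nested sums, exactly as in \eqref{5.6}, is the only delicate point.
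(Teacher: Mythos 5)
Your proposal is correct and follows exactly the route the paper intends: the paper gives no separate proof of this lemma, stating only that it follows ``using the same arguments in Lemma \ref{D2} and Lemma \ref{D3}'', and your reconstruction supplies precisely that argument, with the right homogeneity bookkeeping ($n_1=-1$ versus $n_l=1$) and the correct structural reason why the bubble-interaction term survives in the $\partial_\lambda$ identity but is absorbed into the error in the $\partial_{\overline{x}_j''}$ identity.
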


\begin{lem}\label{D4}
It holds
\begin{equation}\label{d20}
	\int_{\mathbb{R}^{6}}|\nabla \phi|^{2}dx+\int_{\mathbb{R}^{6}}V(r,x'') \phi^{2}dx=O(\frac{m}{\lambda^{2+\varepsilon}}).
\end{equation}
\end{lem}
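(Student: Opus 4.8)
The plan is to test the equation \eqref{c16} satisfied by $\phi=\phi_{\overline{r},\overline{x}'',\lambda}$ against $\phi$ itself and integrate by parts. Multiplying \eqref{c16} by $\phi$ and integrating over $\mathbb{R}^6$ gives
$$
\int_{\mathbb{R}^{6}}|\nabla\phi|^{2}\,dx+\int_{\mathbb{R}^{6}}V(r,x'')\phi^{2}\,dx
=\int_{\mathbb{R}^6}\Big(|x|^{-4}\ast|Z_{\overline{r},\overline{x}'',\lambda}|^{2}\Big)\phi^{2}\,dx
+2\int_{\mathbb{R}^6}\Big(|x|^{-4}\ast(Z_{\overline{r},\overline{x}'',\lambda}\phi)\Big)Z_{\overline{r},\overline{x}'',\lambda}\phi\,dx
+\int_{\mathbb{R}^6}N(\phi)\phi\,dx+\int_{\mathbb{R}^6}l_{m}\phi\,dx
+\sum_{l=1}^{6}c_{l}\sum_{j=1}^{m}\int_{\mathbb{R}^6}\Big[\Big(|x|^{-4}\ast|Z_{z_j,\lambda}|^{2}\Big)Z_{j,l}+2\Big(|x|^{-4}\ast(Z_{z_j,\lambda}Z_{j,l})\Big)Z_{z_j,\lambda}\Big]\phi\,dx.
$$
The last sum vanishes by the orthogonality constraints in the definition of $E$ (up to the minor discrepancy in absolute values, which is harmless since the relevant products are nonnegative on each $\Omega_j$, or one simply uses the exact constraint from \eqref{c1}). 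So it remains to bound the four genuine terms on the right.

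First I would control the two nonlocal quadratic terms. Since $Z_{\overline{r},\overline{x}'',\lambda}\le C\sum_j \lambda^2(1+\lambda|x-z_j|)^{-4}$, Lemma \ref{B4} gives $|x|^{-4}\ast|Z_{\overline{r},\overline{x}'',\lambda}|^{2}\le C\lambda^2\sum_j(1+\lambda|x-z_j|)^{-4}$ (using the decoupling estimate \eqref{c03} on each $\Omega_j$, exactly as in the proof of Lemma \ref{C1}). Hence, using the definition of $\|\cdot\|_{\ast}$ and a change of variables $y=\lambda x$,
$$
\int_{\mathbb{R}^6}\Big(|x|^{-4}\ast|Z_{\overline{r},\overline{x}'',\lambda}|^{2}\Big)\phi^{2}\,dx
\le C\|\phi\|_{\ast}^{2}\lambda^{4}\int_{\mathbb{R}^6}\sum_j\frac{1}{(1+|y-\lambda z_j|)^{4}}\Big(\sum_j\frac{1}{(1+|y-\lambda z_j|)^{2+\tau}}\Big)^{2}\,dy
\le Cm\|\phi\|_{\ast}^{2}\lambda^{2},
$$
the last step again via the $\Omega_j$-decomposition and $\int_{\mathbb{R}^6}(1+|y|)^{-6-2\tau}\,dy<\infty$, together with $\sum_{j\ne 1}(\lambda|z_1-z_j|)^{-\tau}\le C$ from Lemma B.3 of \cite{WY1}. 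The mixed term $2\int(|x|^{-4}\ast(Z_{\overline{r},\overline{x}'',\lambda}\phi))Z_{\overline{r},\overline{x}'',\lambda}\phi$ is handled identically. For the remaining terms I use the Hölder-type pairing $\big|\int h\phi\big|\le C\,m\lambda^{4}\,\|h\|_{\ast\ast}\|\phi\|_{\ast}\cdot\lambda^{-2}$ — more precisely, one checks $\int_{\mathbb{R}^6}\big(\sum_j(1+\lambda|x-z_j|)^{-4-\tau}\big)\big(\sum_j(1+\lambda|x-z_j|)^{-2-\tau}\big)\,dx\le Cm\lambda^{-6}$, so that $|\int N(\phi)\phi|\le Cm\,\lambda^{-2}\,\|N(\phi)\|_{\ast\ast}\|\phi\|_{\ast}\le Cm\lambda^{-2}\|\phi\|_{\ast}^{3}$ by Lemma \ref{C4}, and similarly $|\int l_m\phi|\le Cm\lambda^{-2}\|l_m\|_{\ast\ast}\|\phi\|_{\ast}\le Cm\lambda^{-2}\cdot\lambda^{-(1+\varepsilon)}\cdot\lambda^{-(1+\varepsilon)}$ by Lemma \ref{C5}.

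Now assemble: from Lemma \ref{C3} we have $\|\phi\|_{\ast}\le C\lambda^{-(1+\varepsilon)}$. The first group of terms is $O(m\|\phi\|_{\ast}^{2}\lambda^{2})=O(m\lambda^{-2\varepsilon})$, which is already of the required form $O(m\lambda^{-2-\varepsilon'})$ after relabelling $\varepsilon$ — wait, more carefully: $m\lambda^{-2\varepsilon}$ with $\lambda\sim m^2$ is $O(m^{1-4\varepsilon})$, and since the claim is $O(m\lambda^{-2-\varepsilon})=O(m^{-1-2\varepsilon})$ we actually need the \emph{sharper} bound. The point that saves this is that the quadratic nonlocal terms do \emph{not} really cost a full $m\lambda^{2}\|\phi\|_\ast^2$: the energy $\int|\nabla\phi|^2$ is itself $\lesssim$ these same terms, and one instead argues that $\int(|x|^{-4}\ast|Z|^2)\phi^2\le \tfrac12\int|\nabla\phi|^2 + (\text{lower order})$ is \emph{not} available because $U_{0,\lambda}$ realises the critical constant; so the correct route is to absorb these terms using the nondegeneracy-based coercivity inequality implicit in Lemma \ref{C1}, namely $\|\phi\|_{D^{1,2}}^{2}\le C\big(\int N(\phi)\phi+\int l_m\phi\big)$ restricted to $E$. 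With that coercivity, $\int|\nabla\phi|^2+\int V\phi^2\le Cm\lambda^{-2}\big(\|N(\phi)\|_{\ast\ast}\|\phi\|_{\ast}+\|l_m\|_{\ast\ast}\|\phi\|_{\ast}\big)\le Cm\lambda^{-2}\big(\|\phi\|_{\ast}^{3}+\lambda^{-(1+\varepsilon)}\|\phi\|_{\ast}\big)\le Cm\lambda^{-2}\cdot\lambda^{-(2+\varepsilon)}=O(m\lambda^{-2-\varepsilon})$, which is exactly \eqref{d20}. \textbf{The main obstacle} is precisely this last point: since $Z_{\overline{r},\overline{x}'',\lambda}$ is built from the extremals of the Hardy–Littlewood–Sobolev inequality, the naive test-function computation only yields $\|\phi\|_{D^{1,2}}^2\lesssim(\text{quadratic nonlocal terms})+\|\phi\|_\ast^3+\lambda^{-(1+\varepsilon)}\|\phi\|_\ast$ with the quadratic terms \emph{not} obviously absorbable; one must invoke the uniform invertibility of the linearised operator on $E$ (Lemma \ref{C1}/\ref{C2}, which rests on the nondegeneracy Lemma \ref{nondegeneracy}) to gain the coercivity $\|\phi\|_{D^{1,2}}^{2}\le C\langle L_m^{-1}\phi,\phi\rangle$ needed to close the estimate, and then feed in $\|\phi\|_\ast\le C\lambda^{-(1+\varepsilon)}$.
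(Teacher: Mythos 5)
Your overall strategy (test the equation for $\phi$ against $\phi$ itself, use the orthogonality constraints to kill the $c_l$-terms, and estimate what remains) is exactly the paper's, but the execution contains a $\lambda$-scaling error that derails the argument. The quadratic nonlocal term does \emph{not} cost $Cm\lambda^{2}\|\phi\|_{\ast}^{2}$: writing $|x|^{-4}\ast|Z_{\overline r,\overline x'',\lambda}|^{2}\le C\lambda^{2}\sum_j(1+\lambda|x-z_j|)^{-4}$ and $\phi^{2}\le\|\phi\|_{\ast}^{2}\lambda^{4}\big(\sum_j(1+\lambda|x-z_j|)^{-2-\tau}\big)^{2}$, the integrand carries a factor $\lambda^{6}$ which is exactly cancelled by the Jacobian $\lambda^{-6}$ of the substitution $y=\lambda x$, and the rescaled integral is $O(m)$ by the $\Omega_j$-decomposition. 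Hence
$$
\int_{\mathbb{R}^6}\Big(|x|^{-4}\ast|Z_{\overline r,\overline x'',\lambda}|^{2}\Big)\phi^{2}\,dx\le Cm\|\phi\|_{\ast}^{2}\le \frac{Cm}{\lambda^{2+2\varepsilon}},
$$
which is already of the required size; the mixed term is identical. The same bookkeeping corrects your pairing inequality to $|\int h\phi|\le Cm\|h\|_{\ast\ast}\|\phi\|_{\ast}$ with no stray powers of $\lambda$, giving $|\int l_m\phi|\le Cm\lambda^{-2-2\varepsilon}$ and $|\int N(\phi)\phi|\le Cm\|\phi\|_{\ast}^{3}$. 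This is precisely how the paper closes the proof: its term $I_3$ collects everything at least quadratic in $\phi$ and is bounded directly by $Cm(\|\phi\|_{\ast}^{2}+\|\phi\|_{\ast}^{4})$, with no absorption argument.

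Consequently the ``main obstacle'' you identify is a phantom created by the miscomputed power of $\lambda$, and the rescue you propose is itself a gap: Lemma \ref{C1} and Lemma \ref{C2} give invertibility of the linearized operator in the weighted sup-norms $\|\cdot\|_{\ast}$, $\|\cdot\|_{\ast\ast}$, not an energy coercivity inequality of the form $\|\phi\|_{D^{1,2}}^{2}\le C\big(\int N(\phi)\phi+\int l_m\phi\big)$ on $E$. Such $H^1$-coercivity of the critical nonlocal linearization on the (approximate) orthogonal complement of the kernel would require a separate spectral argument that neither the paper nor your proposal supplies. Once the scaling is corrected, the direct estimate closes the lemma and no coercivity is needed.
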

\begin{proof}
It follows from \eqref{c14} that
$$
\aligned
&\int_{\mathbb{R}^{6}}|\nabla \phi|^{2}dx+\int_{\mathbb{R}^{6}}V(r,x'') \phi^{2}dx\\
=&\int_{\mathbb{R}^{6}}(\Delta Z_{\overline{r},\overline{x}'',\lambda}- V(r,x'')Z_{\overline{r},\overline{x}'',\lambda}
+\Big(|x|^{-4}\ast |Z_{\overline{r},\overline{x}'',\lambda}+
\phi|^{2}\Big)(Z_{\overline{r},\overline{x}'',\lambda}+
\phi))\phi dx\\
=&\int_{\mathbb{R}^{6}}(\Delta Z_{\overline{r},\overline{x}'',\lambda}+\Big(|x|^{-4}\ast |Z_{\overline{r},\overline{x}'',\lambda}|^{2}\Big)
Z_{\overline{r},\overline{x}'',\lambda})\phi dx- \int_{\mathbb{R}^{6}}V(r,x'')Z_{\overline{r},\overline{x}'',\lambda}\phi dx\\
&+\int_{\mathbb{R}^{6}}\Big(\Big(|x|^{-4}\ast |Z_{\overline{r},\overline{x}'',\lambda}+
\phi|^{2}\Big)(Z_{\overline{r},\overline{x}'',\lambda}+
\phi)-\Big(|x|^{-4}\ast |Z_{\overline{r},\overline{x}'',\lambda}|^{2}\Big)
Z_{\overline{r},\overline{x}'',\lambda}\Big)\phi dx\\
:=&I_{1}-I_{2}+I_{3}.
\endaligned$$

By the estimates of $\Phi_{3}$ and $\Phi_{4}$ in Lemma \ref{C5}, we obtain
$$
\left|\int_{\mathbb{R}^{6}}\Delta Z_{\overline{r},\overline{x}'',\lambda} \phi dx\right|
\leq C\frac{\|\phi\|_{\ast}}{\lambda^{1+\varepsilon}}\int_{\mathbb{R}^{6}}
\sum_{j=1}^{m}\frac{\lambda^{4}}{(1+\lambda|x-z_{j}|)^{4+\tau}}\sum_{j=1}^{m}
\frac{\lambda^{2}}{(1+\lambda|x-z_{j}|)^{2+\tau}} dx
\leq \frac{Cm}{\lambda^{2+2\varepsilon}}.
$$
Moreover, from the estimates of $\Phi_{1}$ in Lemma \ref{C5}, we obtain
$$\aligned
&\left|\int_{\mathbb{R}^{6}}\Big(|x|^{-4}\ast |Z_{\overline{r},\overline{x}'',\lambda}|^{2}\Big)
Z_{\overline{r},\overline{x}'',\lambda} \phi dx\right|\\
\leq &C\frac{\|\phi\|_{\ast}}{\lambda^{1+\varepsilon}}\int_{\mathbb{R}^{6}}
\sum_{j=1}^{m}\Big(\frac{\lambda^{4}}{(1+\lambda|x-z_{j}|)^{4+\tau}}
+\frac{\lambda^{4}}{(1+\lambda|x-z_{j}|)^{8}}\Big)\sum_{j=1}^{m}
\frac{\lambda^{2}}{(1+\lambda|x-z_{j}|)^{2+\tau}} dx
\leq \frac{Cm}{\lambda^{2+2\varepsilon}}.
\endaligned$$
So, we can prove
$$
|I_{1}|\leq  \frac{Cm}{\lambda^{2+2\varepsilon}},
$$
By (3.37) in \cite{PWY}, we have
$$
|I_{2}|\leq \frac{Cm}{\lambda^{2+2\varepsilon}}.
$$
By direct calculations, we can prove
$$
|I_{3}|
\leq C (\|\phi\|_{\ast}^{4}+\|\phi\|_{\ast}^{2})
\int_{\mathbb{R}^{6}}\Big(|x|^{-4}\ast \Big(\sum_{j=1}^{m}
\frac{\lambda^{2}}{(1+\lambda|x-z_{j}|)^{2+\tau}}\Big)^{2}\Big)\Big(\sum_{j=1}^{m}
\frac{\lambda^{2}}{(1+\lambda|x-z_{j}|)^{2+\tau}}\Big)^{2} dx
\leq \frac{Cm}{\lambda^{2+2\varepsilon}}.
$$
So, we deduce
$$
\int_{\mathbb{R}^{6}}|\nabla \phi|^{2}dx+\int_{\mathbb{R}^{6}}V(r,x'') \phi^{2}dx=O(\frac{m}{\lambda^{2+\varepsilon}}).
$$
\end{proof}

\noindent
{\bf Proof of Theorem \ref{EXS1}.}
It is easy to see that \eqref{d1} is equivalent to
\begin{equation}\label{d12}
\begin{split}
	-2\int_{D_{\rho}}|\nabla u_{m}|^{2}dx-&\frac{1}{2}\int_{D_{\rho}}(6V(x)+\langle x,\nabla V(x)\rangle) u_{m}^{2}dx
	+3\int_{D_{\rho}}\int_{\R^6}\frac{|u_{m}(y)|^{2}|u_{m}(x)|^{2}}{|x-y|^4}dxdy\\&-2
	\int_{D_{\rho}}\int_{\R^6}x(x-y)\frac{|u_{m}(y)|^{2}|u_{m}(x)|^{2}}{|x-y|^6}dxdy
	\\&=O\Big(\int_{\partial D_{\rho}}\Big(|\nabla \phi|^{2}+ \phi^{2}\Big)ds
	+\int_{\partial D_{\rho}}\Big(\int_{\R^6} \frac{|\phi(y)|^{2}}{|x-y|^4}dy\Big)|\phi|^{2}ds\Big).
\end{split}
\end{equation}

Since
$
\sum_{j=1}^{m}\int_{\mathbb{R}^{6}}[\Big(|x|^{-4}\ast |Z_{z_j,\lambda}|^{2}\Big)Z_{j,l}\phi +2\Big(|x|^{-4}\ast |Z_{z_j,\lambda}Z_{j,l}|\Big)Z_{z_j,\lambda}\phi] dx=0,
$
we obtain from \eqref{c14} that
\begin{equation}\label{d13}
\aligned
\int_{D_{\rho}}|\nabla u_{m}|^{2}dx+\int_{D_{\rho}}V(x) u_{m}^{2}dx&=\int_{D_{\rho}}\Big(|x|^{-4}\ast |u_{m}|^{2}\Big)|u_{m}|^{2} dx+\sum_{l=1}^{6}c_{l}\sum_{j=1}^{m}\int_{\mathbb{R}^{6}}\big[\Big(|x|^{-4}\ast |Z_{z_j,\lambda}|^{2}\Big)Z_{j,l}\\
&+2\Big(|x|^{-4}\ast (Z_{z_j,\lambda}
Z_{j,l})\Big)Z_{z_j,\lambda}\big]Z_{\overline{r},\overline{x}'',\lambda}dx+O\Big(\int_{\partial D_{\rho}}(|\nabla \phi|^{2}+ \phi^{2}) ds\Big).
\endaligned\end{equation}

Inserting \eqref{d13} into \eqref{d12}, we obtain
\begin{equation}\label{d14}
\aligned
\int_{D_{\rho}}(V(x)+\frac{1}{2}\langle x,\nabla V(x)\rangle) u_{m}^{2}dx=&-2\sum_{l=1}^{6}c_{l}\sum_{j=1}^{m}\int_{\mathbb{R}^{6}}\Big[\Big(|x|^{-4}\ast |Z_{z_j,\lambda}|^{2}\Big)Z_{j,l}+2\Big(|x|^{-4}\ast Z_{z_j,\lambda}Z_{j,l}\Big)Z_{z_j,\lambda}\Big]Z_{\overline{r},\overline{x}'',\lambda}dx\\
&+O\Big(\int_{\partial D_{\rho}}\Big(|\nabla \phi|^{2}+ \phi^{2}\Big)ds
+\int_{\partial D_{\rho}}\Big(\int_{\R^6}\frac{|\phi(y)|^{2}}{|x-y|^4}dy\Big)|\phi|^{2}ds\Big)\\
&+O\Big(\int_{ D_{\rho}}\int_{\R^6\backslash D_{\rho}}(x_i-y_i)\frac{|\phi(x)|^{2}|\phi(y)|^{2}}{|x-y|^6}dxdy\Big)+O(\frac{1}{\lambda^{2+\varepsilon}}), \ i=3,\cdot\cdot\cdot,6.
\endaligned\end{equation}

On the other hand, by direct calculations, we can prove
$$
\sum_{j=1}^{m}\int_{\mathbb{R}^{6}}\Big[\Big(|x|^{-4}\ast |Z_{z_j,\lambda}|^{2}\Big)Z_{j,l}+2\Big(|x|^{-4}\ast (Z_{z_j,\lambda}
Z_{j,l})\Big)Z_{z_j,\lambda}\Big]\frac{\partial Z_{\overline{r},\overline{x}'',\lambda}}{\partial \overline{r}} dx=O(m\lambda^{2}), \ \ l=2,\cdot\cdot\cdot,6,
$$
and
$$
\sum_{j=1}^{m}\int_{\mathbb{R}^{6}}\Big[\Big(|x|^{-4}\ast |Z_{z_j,\lambda}|^{2}\Big)Z_{j,1}+2\Big(|x|^{-4}\ast (Z_{z_j,\lambda}
Z_{j,1})\Big)Z_{z_j,\lambda}\Big]\frac{\partial Z_{\overline{r},\overline{x}'',\lambda}}{\partial \overline{r}} dx=O(m).
$$
Combining these and Lemma \ref{D3}, \eqref{ci}, \eqref{dr0}, \eqref{dx0} and \eqref{c16}, we can get the following estimate for $c_l$:
\begin{equation}\label{d15}
c_i=O(\frac{1}{\lambda^{3+\varepsilon}}),  i=2,\cdot\cdot\cdot,6,
\end{equation}
and
\begin{equation}\label{d16}
c_1=O(\frac{1}{\lambda^{1+\varepsilon}}).
\end{equation}
It is easy to see that
$$
\int_{\mathbb{R}^{6}}\Big[\Big(|x|^{-4}\ast |Z_{z_j,\lambda}|^{2}\Big)Z_{j,2}Z_{z_j,\lambda}dx
\leq C\int_{\mathbb{R}^{6}}\int_{\mathbb{R}^{6}}\frac{\lambda^{4}}{(1+\lambda^{2}|x-z_{j} |^{2})^{4}}\frac{1}{|x-y|^{4}}\frac{\lambda^{6}|y-z_{j} |}
{(1+\lambda^{2}|y-z_{j} |^{2})^{5}}dxdy
=O(\lambda),
$$
where $j=1,\cdot\cdot\cdot,m,$ and if $i\neq j$,
$$\aligned
\int_{\mathbb{R}^{6}}&\Big[\Big(|x|^{-4}\ast |Z_{z_j,\lambda}|^{2}\Big)Z_{j,2}Z_{z_i,\lambda}dx\\
&\leq C\int_{\mathbb{R}^{6}}\int_{\mathbb{R}^{6}}\frac{\lambda^{4}}{(1+\lambda^{2}|x-z_{j} |^{2})^{4}}\frac{1}{|x-y|^{4}}\frac{\lambda^{2}}{(1+\lambda^{2}|y-z_{i} |^{2})^{2}}\frac{\lambda^{4}|y-z_{j} |}
{(1+\lambda^{2}|y-z_{j}|^{2})^{3}}dxdy\\
&\leq C\int_{\mathbb{R}^{6}}\frac{1}{(1+|y-\lambda z_{i} |)^{4}}\frac{\lambda}
{(1+|y-\lambda z_{j}|)^{9}}dy\\
&\leq\frac{ C}{\lambda^{3}|z_{i}-z_{j}|^{4}}.
\endaligned$$
by Lemma \ref{P1}. So, for $l=2$, we have
$$
\sum_{j=1}^{m}\int_{\mathbb{R}^{6}}\Big(|x|^{-4}\ast |Z_{z_j,\lambda}|^{2}\Big)Z_{j,2}Z_{\overline{r},\overline{x}'',\lambda}dx
\leq O(m\lambda)+m\sum_{j=2}^{m}\frac{ C}{\lambda^{3}|z_{1}-z_{j}|^{4}}
=O(m\lambda).
$$
Similarly, we have,
$$\aligned
\sum_{j=1}^{m}\int_{\mathbb{R}^{6}}\Big(|x|^{-4}\ast Z_{z_j,\lambda}Z_{j,2}\Big)Z_{z_j,\lambda}Z_{\overline{r},\overline{x}'',\lambda}dx
=O(m\lambda).
\endaligned$$
Consequently,
\begin{equation}\label{eq1}
\sum_{j=1}^{m}\int_{\mathbb{R}^{6}}\Big[\Big(|x|^{-4}\ast |Z_{z_j,\lambda}|^{2}\Big)Z_{j,2}+2\Big(|x|^{-4}\ast Z_{z_j,\lambda}Z_{j,2}\Big)Z_{z_j,\lambda}\Big]Z_{\overline{r},\overline{x}'',\lambda}dx=O(m\lambda).
\end{equation}

Using the same arguments as above, we have,
\begin{equation}\label{eq2}
\sum_{j=1}^{m}\int_{\mathbb{R}^{6}}\Big[\Big(|x|^{-4}\ast |Z_{z_j,\lambda}|^{2}\Big)Z_{j,l}+2\Big(|x|^{-4}\ast Z_{z_j,\lambda}Z_{j,l}\Big)Z_{z_j,\lambda}\Big]Z_{\overline{r},\overline{x}'',\lambda}dx=O(m\lambda), \ \ l=3,4, 5, 6.
\end{equation}
and
\begin{equation}\label{eq3}
\sum_{j=1}^{m}\int_{\mathbb{R}^{6}}\Big[\Big(|x|^{-4}\ast |Z_{z_j,\lambda}|^{2}\Big)Z_{j,l}+2\Big(|x|^{-4}\ast Z_{z_j,\lambda}Z_{j,l}\Big)Z_{z_j,\lambda}\Big]Z_{\overline{r},\overline{x}'',\lambda}dx=O(\frac{m}{\lambda}), \ \ l=1.
\end{equation}
By \eqref{eq1}-\eqref{eq3}, we find from \eqref{d15} and \eqref{d16} that \eqref{d14} is equivalent to
\begin{equation}\label{d17}
\aligned
\int_{D_{\rho}}(V(x)+\frac{1}{2}\langle x,\nabla V(x)\rangle) u_{m}^{2}dx
=&O(\frac{m}{\lambda^{2+\varepsilon}})+O\Big(\int_{\partial D_{\rho}}\Big(|\nabla \phi|^{2}+ \phi^{2}\Big)ds+\int_{\partial D_{\rho}}\Big(\int_{\R^6}\frac{|\phi(y)|^{2}}{|x-y|^4}dy\Big)|\phi|^{2}ds\Big)\\
&
+O\Big(\int_{ D_{\rho}}\int_{\R^6\backslash D_{\rho}}(x_i-y_i)\frac{|\phi(x)|^{2}|\phi(y)|^{2}}{|x-y|^6}dxdy\Big)+O(\frac{1}{\lambda^{2+\varepsilon}}),i=3,\cdot\cdot\cdot,6
\endaligned\end{equation}
for some small $\varepsilon>0$.

We integrate by parts to find that \eqref{d2} is equivalent to
\begin{equation}\label{d11}
\begin{split}
	\int_{D_{\rho}}u_{m}^{2}
	\frac{\partial V(|x'|,x'')}{\partial x_{i}} dx&=O\Big(\int_{\partial D_{\rho}}\Big(|\nabla \phi|^{2}+ \phi^{2}\Big)ds
	+\int_{\partial D_{\rho}}\Big(\int_{\R^6}\frac{|\phi(y)|^{2}}{|x-y|^4}dy\Big)|\phi|^{2}ds\Big)\\&~~~+O\Big(\int_{ D_{\rho}}\int_{\R^6\backslash D_{\rho}}(x_i-y_i)\frac{|\phi(x)|^{2}|\phi(y)|^{2}}{|x-y|^6}dxdy\Big)+O(\frac{1}{\lambda^{2+\varepsilon}}), i=3,\cdot\cdot\cdot,6.
\end{split}
\end{equation}
From \eqref{d11}, we can rewrite \eqref{d17} as
\begin{equation}\label{d18}
\begin{split}
	\int_{D_{\rho}}(V(x)+\frac{r}{2}\frac{\partial V(r,x'')}{\partial r}) u_{m}^{2}dx
	&=o(\frac{m}{\lambda^{2}})+O\Big(\int_{\partial D_{\rho}}\Big(|\nabla \phi|^{2}+ \phi^{2}\Big)ds
	+\int_{\partial D_{\rho}}\Big(\int_{D_{\rho}} \frac{|\phi(y)|^{2}}{|x-y|^4}\Big)|\phi|^{2}ds\Big)\\&~~~+
	O\Big(\int_{ D_{\rho}}\int_{\R^6\backslash D_{\rho}}(x_i-y_i)\frac{|\phi(x)|^{2}|\phi(y)|^{2}}{|x-y|^6}dxdy\Big), \ i=3,\cdot\cdot\cdot,6,
\end{split}
\end{equation}
that is,
\begin{equation}\label{d19}
\begin{split}
	\int_{D_{\rho}}\frac{1}{2 r}\frac{\partial( r^{2}V(r,x''))}{\partial r} u_{m}^{2}dx
	&=o(\frac{m}{\lambda^{2}})+O\Big(\int_{\partial D_{\rho}}\Big(|\nabla \phi|^{2}+ \phi^{2}\Big)ds
	+\int_{\partial D_{\rho}}\Big(\int_{D_{\rho}} \frac{|\phi(y)|^{2}}{|x-y|^4}\Big)|\phi|^{2}ds\Big)\\&~~~+
	O\Big(\int_{ D_{\rho}}\int_{\R^6\backslash D_{\rho}}(x_i-y_i)\frac{|\phi(x)|^{2}|\phi(y)|^{2}}{|x-y|^6}dxdy\Big), \ i=3,\cdot\cdot\cdot,6.
\end{split}
\end{equation}

From Lemma 3.5 in\cite{PWY}, we know
$$
\int_{D_{4\delta}\backslash D_{3\delta}}|\nabla\phi|^{2}dx=O(\frac{m}{\lambda^{2+\varepsilon}})).
$$
Which together with Lemma \ref{D4} implies that
$$
\int_{D_{4\delta}\backslash D_{3\delta}}
\Big(|\nabla \phi|^{2}+ \phi^{2}\Big)dx
+\int_{D_{4\delta}\backslash D_{3\delta}}\Big(\int_{D_{\rho}} \frac{|\phi(y)|^{2}}{|x-y|^4}dy\Big)|\phi|^{2}dx
+\int_{ D_{4\delta}\backslash D_{3\delta}}\int_{\R^6\backslash D_{\rho}}(x_i-y_i)\frac{|\phi(x)|^{2}|\phi(y)|^{2}}{|x-y|^6}dxdy
=O(\frac{m}{\lambda^{2+\varepsilon}}),
$$
where $i=3,\cdot\cdot\cdot,6$. As a result, we can find a $\rho\in(3\delta,4\delta)$, such that
$$
\int_{\partial D_{\rho}}\Big(|\nabla \phi|^{2}+ \phi^{2}\Big)ds
+\int_{\partial D_{\rho}}\Big(\int_{D_{\rho}} \frac{|\phi(y)|^{2}}{|x-y|^4}dy\Big)|\phi|^{2}ds+
\int_{ D_{\rho}}\int_{\R^6\backslash D_{\rho}}(x_i-y_i)\frac{|\phi(x)|^{2}|\phi(y)|^{2}}{|x-y|^6}dxdy=O(\frac{m}{\lambda^{2+\varepsilon}}),
$$
where $i=3,\cdot\cdot\cdot,6$. By Lemma 3.4 in \cite{PWY}, for any $C^1$ function $g(r, x'')$, it holds
	$$
	\int_{D_{\rho}}g(r,x'') u_{m}^{2}dx=m(\frac{1}{\lambda^{2}}g(\overline{r},\overline{x}'')\int_{\mathbb{R}^{6}}U_{0,1}^{2}dx+o(\frac{1}{\lambda^{2}})).
	$$
 We can obtain from \eqref{d11} and \eqref{d19} that
$$
m(\frac{1}{\lambda^{2}}\frac{\partial V(\overline{r},\overline{x}'')}{\partial \overline{x}_{i}}\int_{\mathbb{R}^{6}}U_{0,1}^{2}dx+o(\frac{1}{\lambda^{2}}))=o(\frac{m}{\lambda^{2}}),
$$
and
$$
m(\frac{1}{\lambda^{2}}\frac{1}{2\overline{r}}\frac{\partial (\overline{r}^{2}V(\overline{r},\overline{x}''))}{\partial \overline{r}}\int_{\mathbb{R}^{6}}U_{0,1}^{2}dx+o(\frac{1}{\lambda^{2}}))=o(\frac{m}{\lambda^{2}}).
$$
Therefore, the equations to determine $(\overline{r},\overline{x}'')$ are
\begin{equation}\label{d21}
\frac{\partial V(\overline{r},\overline{x}'')}{\partial \overline{x}_{i}}=o(1),  i=3,\cdot\cdot\cdot,6,
\end{equation}
and
\begin{equation}\label{d22}
\frac{\partial (\overline{r}^{2}V(\overline{r},\overline{x}''))}{\partial \overline{r}}=o(1).
\end{equation}

We have proved that \eqref{d1}, \eqref{d2} and \eqref{d3} are equivalent to \eqref{d21}, \eqref{d22} and
$$
-\frac{B_1}{\lambda^{3}}V(\overline{r},\overline{x}'')+\frac{m^{4}B_3}{\lambda^{5}}=O(\frac{1}{\lambda^{3+\varepsilon}}).
$$
Let $\lambda=tm^{2}$, then $t\in[L_0, L_1]$ since $\lambda\in[L_0m^{2}, L_1m^{2}]$. Then, we can get
\begin{equation}\label{d23}
-\frac{B_1}{t^{3}}V(\overline{r},\overline{x}'')+\frac{B_3}{t^{5}}=o(1), t\in[L_0, L_1].
\end{equation}
Let
$$
F(t, \overline{r},\overline{x}'')=(\nabla_{\overline{r},\overline{x}''}(\overline{r}^{2}V(\overline{r},\overline{x}'')),
-\frac{2B_1}{t^{3}}V(\overline{r},\overline{x}'')+\frac{B_3}{t^{5}}).
$$
Then
$$
\mbox{deg}(F(t, \overline{r},\overline{x}''), [L_0, L_1]\times B_{\theta}((r_{0},x_{0}'')))
=-\mbox{deg}(\nabla_{\overline{r},\overline{x}''}(\overline{r}^{2}V(\overline{r},x_{0}'')),B_{\theta}((r_{0},x_{0}'')))\neq0.
$$
So, \eqref{d21}, \eqref{d22} and \eqref{d23} have a solution $t_{m}\in[L_0, L_1]$, $(\overline{r}_{m},\overline{x}_{m}'')\in B_{\theta}((r_{0},x_{0}''))$.
$\hfill{} \Box$

\vspace{1cm}

{\bf Statements.} There is no conflict of interest between the authors.and all data generated or analysed during this study are included in this published article.
\vspace{1cm}

\end{document}